\theoremstyle{plain}
\newtheorem{theorem}{Theorem}[section]
\newtheorem{proposition}[theorem]{Proposition}
\newtheorem{lemma}[theorem]{Lemma}
\theoremstyle{definition}
\newtheorem{definition}[theorem]{Definition}
\newtheorem{assumption}[theorem]{Assumption}
\theoremstyle{remark}
\newtheorem{remark}[theorem]{Remark}
\newcommand{\fix}{\marginpar{FIX}}
\newcommand{\new}{\marginpar{NEW}}
\def\sR{{\mathbb R}}
\DeclareMathOperator*{\argmax}{arg\,max}
\DeclareMathOperator*{\argmin}{arg\,min}
\def\trace{{\mathrm{tr}}}
\def\bx{{\mathbf x}}
\def\bW{{\mathbf W}}
\def\bgT{{\boldsymbol{\mathcal{T}}}}
\def\gT{{\mathcal{T}}}
\def\gL{{\mathcal{L}}}
\def\gP{{\mathcal{P}}}
\def\gO{{\mathcal{O}}}
\def\Retr{{\mathrm{Retr}}}
\def\grad{{\mathrm{grad}}}
\def\St{{\mathrm{St}}}
\def\Skew{{\mathrm{Skew}}}
\def\M{{\mathcal{M}}}
\def\expm{{\mathrm{expm}}}
\def\Exp{{\mathrm{Exp}}}
\def\id{{\mathrm{id}}}
\def\hess{{\mathrm{Hess}}}
\def\vec{{\mathrm{vec}}}
\def\sE{{\mathbb E}}
\def\skew{{\mathrm{skew}}}
\def\D{{\mathrm{D}}}
\def\gS{{\mathcal{S}}}
\newcommand{\highlight}[1]{{\color{red}#1}}
\newcommand{\ahcomment}[1]{{\color{purple}#1}}
\newcommand{\plcomment}[1]{{\color{blue}#1}}
\icmltitlerunning{Efficient Optimization with Orthogonality Constraint}
\begin{document}

\twocolumn[
\icmltitle{Efficient Optimization with Orthogonality Constraint: \\ a Randomized Riemannian Submanifold Method}



\icmlsetsymbol{equal}{*}

\begin{icmlauthorlist}
\icmlauthor{Andi Han}{yyy,sch}
\icmlauthor{Pierre-Louis Poirion}{yyy}
\icmlauthor{Akiko Takeda}{yyy,comp}
\end{icmlauthorlist}

\icmlaffiliation{yyy}{RIKEN AIP}
\icmlaffiliation{comp}{University of Tokyo}
\icmlaffiliation{sch}{University of Sydney}

\icmlcorrespondingauthor{Andi Han}{andi.han@sydney.edu.au}

\icmlkeywords{Machine Learning, ICML}

\vskip 0.3in
]



\printAffiliationsAndNotice{}  

\begin{abstract}
    Optimization with orthogonality constraints frequently arises in various fields such as machine learning. Riemannian optimization offers a powerful framework for solving these problems by equipping the constraint set with a Riemannian manifold structure and performing optimization intrinsically on the manifold. This approach typically involves computing a search direction in the tangent space and updating variables via a retraction operation. However, as the size of the variables increases, the computational cost of the retraction can become prohibitively high, limiting the applicability of Riemannian optimization to large-scale problems.  To address this challenge and enhance scalability, we propose a novel approach that restricts each update on a random submanifold, thereby significantly reducing the per-iteration complexity. We introduce two sampling strategies for selecting the random submanifolds and theoretically analyze the convergence of the proposed methods. We provide convergence results for general nonconvex functions and functions that satisfy Riemannian Polyak–Łojasiewicz condition as well as for stochastic optimization settings. Additionally, we demonstrate how our approach can be generalized to quotient manifolds derived from the orthogonal manifold.  Extensive experiments verify the benefits of the proposed method, across a wide variety of problems.
\end{abstract}

\newcommand{\memo}[1]{{\color{red!80!black}[#1]}}
\section{Introduction}

In this paper, we consider optimization problems with orthogonality constraint, i.e.,
\begin{equation}
    \min_{X \in \sR^{n \times p}: X^\top X = I_p}  F(X)  \label{main_eq}
\end{equation}
where the matrix variable $X \in \sR^{n \times p}$ with $n \geq p$ is column orthonormal and $F: \sR^{n \times p} \rightarrow \sR$. Optimization with orthogonality constraint arises naturally in various domains of applications because it is crucial for achieving certain desired properties, such as linear independence, numerical stability and geometry preserving. Examples of applications include principal component analysis \citep{hotelling1933analysis}, independent component analysis \citep{theis2009soft}, multi-view clustering \citep{khan2021multi,liu2021one,chen2022efficient}, low-rank matrix completion \citep{vandereycken2013low,mishra2014fixed}, robust optimal transport \citep{lin2020projection,huang2021riemannian}, training of deep neural networks \citep{helfrich2018orthogonal,li2019orthogonal,wang2020orthogonal}, continual learning \citep{chaudhry2020continual} and fine-tuning large foundation models \citep{qiu2023controlling,liuparameter}, among many others. 


\begin{figure}
  \centering
  \begin{minipage}{0.15\textwidth}
    \centering
    \includegraphics[width=\linewidth]{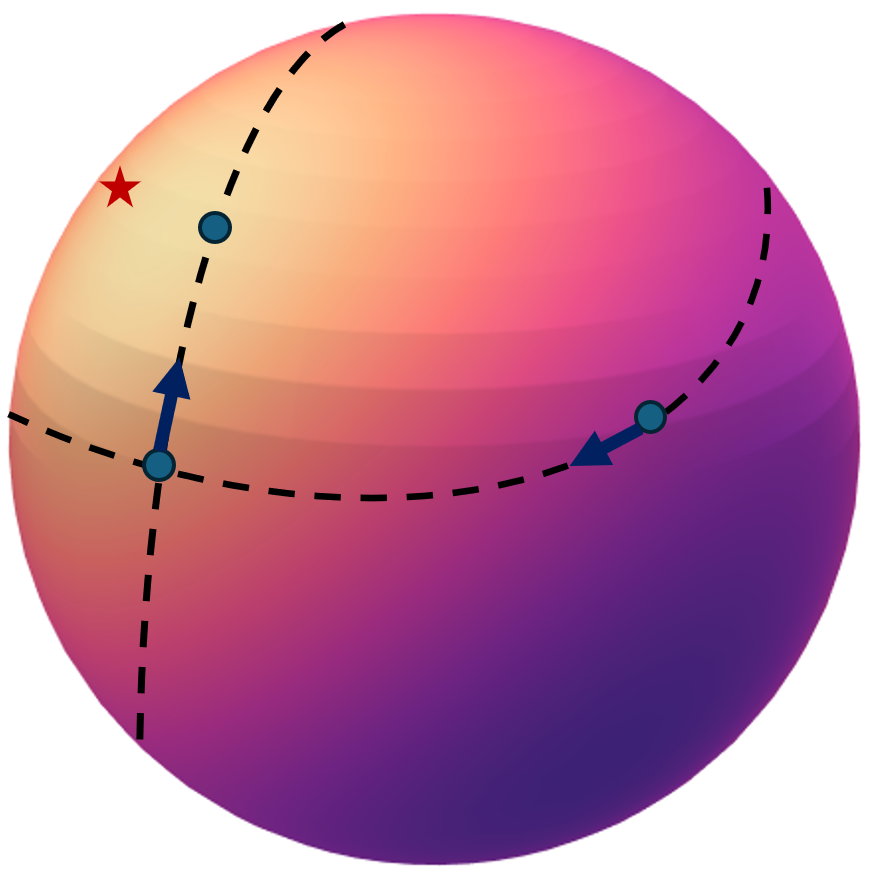}
  \end{minipage}
  \hspace{0.15in}
  \begin{minipage}{0.27\textwidth}
    \captionof{figure}{Proposed random submanifold method on $2$-sphere. 
    Each iteration restricts the update to a $1$-dimensional randomly selected submanifold, i.e., a circle.}
  \end{minipage}
  \vspace*{-0.05in}
\end{figure}


Riemannian optimization \citep{absil2008optimization,boumal2023introduction,han2024riemannianopt} provides a powerful framework for solving \eqref{main_eq} by leveraging the geometry of the orthogonality constraint. Indeed, the set of orthogonal constraint forms a smooth manifold known as the Stiefel manifold, denoted by $\St(n,p) \coloneqq \{ X \in \sR^{n \times p}: X^\top X = I_p \}$. By equipping the manifold with a suitable Riemannian metric, optimization can be performed intrinsically on the manifold. A crucial step in this process is the retraction operation, which ensures that iterates remain on the manifold after each update. Various retractions have been proposed for the Stiefel manifold, such as those based on QR factorization \citep{absil2008optimization}, polar decomposition \citep{absil2012projection}, the Cayley transform \citep{wen2013feasible}, and the matrix exponential \citep{edelman1998geometry}. All these retractions require non-standard linear algebra operations with a complexity of at least $O(np^2)$ (see Section \ref{sect:prelim} for details). As a result, the retraction step becomes the primary bottleneck for Riemannian optimization solvers as $n$ and $p$ increase.

In this work, we propose a novel approach that updates the variable on a random submanifold. In particular, our contributions are summarized as follows. 
\begin{itemize}[leftmargin=0.15in]
    \vspace{-5pt}

    \item We propose a novel  parameterization for the update via the action of orthogonal group. This allows to update the current iterate in a random submanifold of orthogonal group. This reduces the complexity of non-standard linear algebra operations (such as matrix decomposition, matrix inverse, and matrix exponential) from $O(np^2)$ to $O(r^3)$, where $r$ is the dimension of the submanifold selected. 

    \item We introduce two strategies for the parameterization, through \textit{permutation} and \textit{ orthogonal transformation}. We derive the convergence results both in expectation and in high probability. 
    We show the trade-off between the two in terms of efficiency and convergence guarantees.  We show the other standard computations are reduced from $O(np^2)$ to $O(nr^2)$ or $O(npr)$ under permutation and orthogonal sampling respectively. 
   

    \item We establish convergence guarantees for a range of settings, including \textit{general nonconvex} optimization problems, nonconvex functions that satisfy the \textit{Riemannian Polyak-\L ojasiewicz (PL)} condition, and \textit{stochastic settings} under both general nonconvex and PL conditions.  We show how our developments can be extended to quotient manifolds derived from the orthogonal manifold, including Grassmann and flag manifolds. 

    \item We validate the effectiveness of the proposed method through extensive experiments, showcasing its fast convergence across a variety of problems, spanning both synthetic and real-world applications. 
\end{itemize}

\subsection{Related Works}



\citet{lezcano2019cheap} re-parameterizes the orthogonality constraint in Euclidean space via the Lie exponential map. However, this approach requires differentiating through the exponential map, which can be computationally expensive. Recently, 
\citet{shalit2014coordinate,gutman2023coordinate,yuan2023block,han2024riemannian,cheung2024randomized} extend the idea of coordinate descent to Stiefel manifold by only updating a few rows/columns while adhering to the orthogonality constraint. Despite the promise in cheap per-iteration update, they either suffer from poor {runtime on modern hardware, such as GPUs} by requiring a significant number of iterations to converge \citep{shalit2014coordinate,gutman2023coordinate,han2024riemannian} or involve a subproblem that may become difficult to solve in general \citep{yuan2023block}. It is worth highlighting that \citet{cheung2024randomized} lift the coordinate updates to the ambient space and then project back to the manifold. Their algorithms still require non-standard linear algebra operations that cost $O(nr^2)$, where our method scales with $O(r^3)$. As we elucidate the differences to these works in Section \ref{sect:sample_complexity}, our proposed submanifold update includes the coordinate descent as a special case, yet being more efficient {in runtime empirically}. 
Another line of research, including \citep{gao2019parallelizable,xiao2022class,ablin2022fast,ablin2023infeasible} develop infeasible methods for solving \eqref{main_eq}, where the updates do not necessarily satisfy the orthogonality constraint. {A recent work \citep{shustin2024faster} proposes a randomized sketching method on the generalized Stiefel manifold with constraint $X^\top B X = I_p$. However, they assume $B = Z^\top Z$, with $Z \in \sR^{d \times n}$ with $d \gg n$. The aim is to reduce complexity in constructing $B$ and improve the conditioning of optimization, which is different to our setting where $B = I_n$ and the aim is to reduce the complexity related to retraction.}


\section{Preliminaries}
\label{sect:prelim}
Stiefel manifold $\St(n,p) = \{ X \in \sR^{n \times p} : X^\top X = I_p \}$ is the set of column orthonormal matrices. When $n = p$, $\St(n,p) \equiv \gO(n)$, which is called orthogonal manifold, also forming a group. We use $T_X\St(n,p)$ to denote the tangent space at $X \in \St(n,p)$ and consider the Euclidean metric as the Riemannian metric, i.e., for any $U, V \in T_X \St(n,p)$, $\langle U, V \rangle_X = \langle U, V \rangle$, we use $\langle \cdot, \cdot \rangle$ to represent the Euclidean inner product. For a smooth function $F: \St(n,p) \rightarrow \sR$, the Riemannian gradient $\grad F(X) \in T_X \St(n,p)$ is derived as $\grad F(X) = \nabla F(X) - X \{X^\top \nabla F(X) \}_{\rm S}$, where $\nabla F(X)$ is the Euclidean gradient and $\{ A \}_{\rm S} \coloneqq (A + A^\top)/2$. Retraction $\Retr_X: T_X \St(n,p) \rightarrow \St(n,p)$ is a smooth map that allows to update iterate following a tangent vector direction. Many retractions are proposed on Stiefel manifold, including (1) QR-based retraction: $\Retr_X(U) = {\rm qf}(X + U)$, where ${\rm qf}$ extracts the Q-factor from the QR decomposition; (2) Polar retraction: $\Retr_X(U) = (X + U)(I_p + U^\top U)^{-1/2}$; (3) Cayley retraction: $\Retr_X(U) = (I_n - W)^{-1} (I_n + W) X$ where $U = WX$ for some skew-symmetric $W \in \sR^{n \times n}$;  (4) Exponential retraction: $\Retr_X(U) = \begin{bmatrix}
    X & U
\end{bmatrix} \expm(\begin{bmatrix}
    X^\top U & - U^\top U \\
    I_p & X^\top U
\end{bmatrix}) \begin{bmatrix}
    \expm(- X^\top U) \\ 0
\end{bmatrix}$, where $\expm(\cdot)$ denotes matrix exponential. We highlight that \textit{all retractions require linear algebra operations other than matrix multiplications that costs at least $O(np^2)$}.

One classic Riemannian  solver is the Riemannian gradient descent \citep{udriste2013convex} that updates the variable as   $X_{k+1} = \Retr_{X_k} \big( - \eta_k \grad F(X_k) \big),$
where $\eta_k > 0$ is the stepsize. Other more advanced solvers include Riemannian accelerated gradient methods \citep{ahn2020nesterov,alimisis2021momentum,han2023riemannian}, variance reduced gradient methods \citep{bonnabel2013stochastic,zhang2016riemannian,kasai2018riemannian,han2020escape,han2021improved,ijcai2021p345,utpala2023improved}, quasi-Newton methods \citep{qi2010riemannian,huang2015broyden,huang2018riemannian}, and second-order methods \citep{absil2007trust,agarwal2021adaptive}, just to name a few. Some more recent works develop algorithms for solving non-smooth optimization \citep{ferreira1998subgradient,chen2020proximal,li2024riemannian}, min-max optimization \citep{zhang2023sion,han2023nonconvex,han2023riemannianham}, bi-level optimization problems \citep{han2024framework,li2025riemannian} on Riemannian manifolds. 

Despite the recent progress in Riemannian optimization, \textit{All the aforementioned methods utilize the retraction operation} and thus it becomes critical to reduce its complexity before scaling to large problems.

\textbf{Notations.}
We use $O(\cdot)$ and $\Omega(\cdot)$ to denote the big-O 
 and big-Omega notation and $\gO(n)$ to represent the orthogonal manifold of size $n \times n$. $\gP(n) \subset \gO(n)$ represents the set of permutation matrices and $\gS^n = \{ v \in \sR^{n} : v^\top v = 1 \}$ is the unit sphere. We {use $\cong$ to represent} a diffeomorphism between two manifolds. We use $\langle \cdot, \cdot \rangle, \| \cdot \|$ to denote the Euclidean inner product and Euclidean norm, and use $\langle \cdot, \cdot \rangle_X, \| \cdot \|_X$ to denote Riemannian inner product and norm on $T_X\St(n,p)$. Because we only consider Euclidean metric as the Riemannian metric in this work, we use $\| \cdot \|$ and $\| \cdot \|_X$ interchangeably. We {use $P(r)$ to denote} the first $r$ rows of a matrix $P$.


\section{Riemannian Random Submanifold Descent}
\label{sect:method}

This section introduces the proposed method that reduces the complexity of retraction by restricting the update to a random submanifold. In particular, at each iteration $k$,  we parameterize the next iterate as $X_{k+1} = U_k X_k$ for some $U_k \in \gO(n)$ and thus converts the problem to optimization over $U_k \in \gO(n)$ on the orthogonal manifold. Such a parameterization can be justified by the fact that the action of the orthogonal group $\gO(n)$ over $\St(n,p)$ is transitive. Hence, at any point $X_k$, there exists a matrix $U_k^* \in \gO(n)$ such that $X^*=U_k^*X_k$, {where $X^*$ is any local minimizer}. Further, we parameterize the orthogonal matrix $U_k$ by a random orthogonal matrix $P_k \in \gO(n)$ and a low-dimensional orthogonal matrix $Y \in \gO(r)$ where $r$ is the lower dimension that we choose, and we define
\begin{equation}
    U_k(Y) = P_k^\top \begin{bmatrix}
    Y & 0 \\
    0 & I_{n-r}
\end{bmatrix} P_k. \label{u_k_y_def}
\end{equation}
By minimizing $\widetilde{F}_k(Y):= F(U_k(Y) X_k)$ over $Y$ instead of minimizing $F_k(U) \coloneqq F(U X_k)$ over $U$, we update the iterates on a random submanifold defined via {(the first $r$ rows of)} the random orthogonal matrix $P_k$. Rather than minimizing $\widetilde{F}_k$ to global optimality, we minimize the first-order approximation of the function $\widetilde{F}_k$ around $I_r$ such that the update remains close to $X_k$. 
This suggests we can compute $Y$ by taking a Riemannian gradient update from $I_r$, i.e.,
$
    Y = \Retr_{I_r} ( - \eta_k  \, \grad \widetilde F_k(I_r))
$
for some stepsize $\eta_k > 0$. We remark that our approach can be viewed as a generalization of the random subspace gradient descent in the Euclidean space \citep{kozak2021stochastic} to the Stiefel manifold. Specifically, the Euclidean random subspace updates the variable as  $x_{k+1}=x_k+u_k(y)$ where $u_k(y)=P_k^\top y$ for some random matrix $P_k$ that spans the subspace.

To compute the Riemannian gradient $\grad \widetilde F_k(I_r)$, let
$P_k(r) \in \sR^{r \times n}$ denote the first $r$ rows of $P_k$. Using the expression of Riemannian gradient, we can derive
\begin{align}
    \grad &\widetilde F_k (I_r) = \frac{1}{2} (\nabla \widetilde F_k(I_r) - \nabla \widetilde F_k(I_r)^\top ) \nonumber\\
    &= \frac{1}{2} P_k(r)  \Big(   \nabla F(X_k) X_k^\top - X_k \nabla F(X_k)^\top  \Big) P_k(r)^\top \nonumber\\
    &= P_k(r) \grad F_k (I_n) P_k(r)^\top \label{eq:grad}
\end{align} 
To ensure the updates adequately explore the full space with high probability, we re-sample the orthogonal matrix $P_k \in \gO(n)$ each iteration. The distribution from which the orthogonal matrix is sampled will be discussed in detail in the subsequent section. The full algorithm is outlined in Algorithm \ref{rsgm_stiefel_algo}, where we call the proposed method \textbf{Riemannian random submanifold descent (RSDM)}.



\begin{algorithm}[t]
 \caption{RSDM}
 \label{rsgm_stiefel_algo}
 \begin{algorithmic}[1]
  \STATE Initialize $X_0 \in \St(n,p)$. 
  \FOR{$k = 0,...,K-1$}
    \STATE Sample $P_k \in \gO(n)$ and let $\widetilde F_k(Y) = F(U_k(Y) X_k)$ where $U_k(Y)$ is defined in \eqref{u_k_y_def}.
    \STATE Compute Riemannian gradient $\grad \widetilde F_k(I_r)$. 
    \STATE Update $Y_k = \Retr_{I_r}(- \eta \, \grad \widetilde F_k(I_r))$. 
    \STATE Set $X_{k+1} = U_k(Y_k) X_k$. 
  \ENDFOR
 \end{algorithmic} 
\end{algorithm}

\section{Sampling Strategies and Complexities}
\label{sect:sample_complexity}

In this section, we introduce two sampling strategies and respectively analyze the per-iteration complexity of Algorithm \ref{rsgm_stiefel_algo}. We propose to sample $P_k$ from two distributions, (1) a uniform distribution over the set of \textit{orthogonal matrices} and (2) a uniform distribution over the set of \textit{permutation matrices}. The second strategy of sampling from a permutation matrix is considered due to its sampling and computational efficiency compared to the orthogonal sampling. 

The per-iteration complexity of Algorithm \ref{rsgm_stiefel_algo} is attributed to four parts, i.e., \textit{sampling}, \textit{gradient computation}, \textit{gradient descent update} and \textit{iterate update}. For both sampling strategies, the gradient descent update (Step 5) shares the same complexity. In particular, the gradient update involves the retraction on $\gO(r)$, which is on the order of $O(r^3)$. Next we respectively discuss the sampling and computational cost of each sampling strategy. As we show later, the per-iteration cost of orthogonal sampling strategy is $O(npr)$ while the per-iteration cost of permutation sampling strategy is $O(nr^2)$.

\textbf{Uniform Orthogonal.}
We first consider sampling $P_k$ from the unique translation invariant probability measure (i.e. the Haar measure) on $\gO(n)$. 
Because only $r$ rows of $P_k$ is required, the sampling can be performed using QR decomposition (with Gram-Schmidt method) on a randomly sampled Gaussian matrix $P \in \sR^{r \times n}$ \citep{Meckes}. Hence, the cost of sampling is $O(nr^2)$. Furthermore,  from \eqref{eq:grad}, {the computation of $\grad \widetilde F_k(I_r)$} requires a complexity of $O(npr)$ by first computing $P_k(r) \nabla F(X_k)$ and $P_k(r) X_k$ before multiplication. For the iterate update, the computation of $U_k(Y_k) X_k$ only depends on the first $r$ rows of $P_k$ as $U_k(Y_k) X_k = X_k + P_k(r)^\top ( Y - I_r ) P_k(r) X_k$, which requires $O(npr)$. This suggests the total per-iteration complexity for uniform orthogonal sampling is {$O(npr)$}.

\textbf{Uniform Permutation.}
Sampling from a uniform distribution on permutation matrices corresponds to sampling a permutation $\pi: [n] \rightarrow [n]$, and thus the sampling complexity is negligible compared to other operations. {In practice, sampling $r$ indices without replacement from $[n]$ is sufficient.}
In terms of gradient computation, because 
$P_k(r)$ is a truncated permutation matrix,  $P_k(r) \nabla F(X_k)$ and $P_k(r) X_k$ corresponds to permuting the rows of $\nabla F(X_k)$ and $X_k$. This largely reduces the cost compared to matrix multiplication. Thus the gradient computation only requires $O(nr^2)$. Lastly, for the iterate update, we highlight matrix multiplication involving both $P_k(r)$ and $P_k(r)^\top$ corresponds to rearranging the rows and thus the cost can be reduced to $O(nr^2)$. Thus the total cost is $O(nr^2)$.



\begin{remark}[\textbf{Riemannian coordinate descent is a special case}] We show that with the permutation sampling and $r = 2$, RSDM is equivalent to the Riemannian coordinate descent on Stiefel manifold \citep{shalit2014coordinate,han2024riemannian,yuan2023block}. To see this,  we first recall that a Givens rotation $G_{k,l}(\theta) \in \gO(n)$ represents a sparse orthogonal matrix such that its non-zero entries satisfy (1) $[G_{k,l}(\theta)]_{i,i} = 1$ for all $i \neq k$ and $i \neq l$; (2) $[G_{k,l}(\theta)]_{i,i} = \cos\theta$ for all $i =k,l$; (3) $[G_{k,l}(\theta)]_{k,l} = - [G_{k,l}(\theta)]_{l,k} = - \sin \theta$. Further we know that when $r = 2$, any $Y \in \gO(2)$ can be parameterized by an angular parameter $\theta$ and is either a rotation matrix $ R(\theta) = \begin{bmatrix}
    \cos \theta & \sin\theta \\
    -\sin\theta& \cos\theta
\end{bmatrix}$ or a reflection matrix $F(\theta) = \begin{bmatrix}
    \cos \theta & \sin\theta \\
    \sin\theta& -\cos\theta
\end{bmatrix}$. Thus it is easy to verify that $G_{k,l}(\theta) = P_{k,l} \begin{bmatrix}
    R(\theta) & 0 \\
    0 & I_{n-2}
\end{bmatrix} P_{k,l}^\top$, where $P_{k,l}$ corresponds to the permutation $\pi$ such that $\pi(1) = k, \pi(2) = l$. This suggests that  the update of RSDM reduces to $X_{k+1} = G_{k,l}(\theta) X_k$, which is how coordinate descent is implemented in \citep{shalit2014coordinate,han2024riemannian,yuan2023block}. In \cite{yuan2023block}, $F(\theta)$ is further considered as an alternative to the rotation.
\end{remark}

\section{Theoretical Guarantees}

In this section, we analyze the convergence guarantees for the proposed RSDM under both orthogonal sampling and permutation sampling. The proofs of all results are included in Appendix sections.  We make use of the following notations throughout the section. Recall we have defined in Section \ref{sect:method} that $F_k(U) = F(U X_k)$ and $\widetilde F_k(Y) = F(U_k(Y) X_k)$ at iteration $k$. We also introduce generic notations that $F_X(U) \coloneqq F(UX)$ and $\widetilde F_X(Y) = F(U(Y)X)$ for some sampled $P \in \gO(n)$.


\begin{assumption}
\label{assump:bound_grad_hess}
$F$ has bounded gradient and Hessian in the ambient space, i.e., $\| \nabla F(X) \|  \leq C_0$, $\| \nabla^2 F(X) [U] \| \leq C_1 \|U \|$ for any $X \in \St(n,p)$, $U \in \sR^{n \times p}$.
\end{assumption}

Assumption \ref{assump:bound_grad_hess} is naturally satisfied given $X \in \St(n,p)$, which is a compact submanifold of the ambient Euclidean space $\sR^{n \times p}$. The next lemma verifies the (Riemannian) smoothness of $\widetilde F_X(Y)$, which  is due to Assumption \ref{assump:bound_grad_hess}.

\begin{lemma}
\label{lemma:ftilde_smooth}
Under Assumption \ref{assump:bound_grad_hess}, for any $X \in \St(n,p)$, $\widetilde F_X(Y)$ is $(C_0 + C_1)$-smooth on $\gO(r)$.
\end{lemma}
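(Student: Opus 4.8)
The plan is to establish geodesic smoothness directly, by bounding the second derivative of $\widetilde F_X$ along geodesics of $\gO(r)$. Since we use the Euclidean metric, the induced metric on $\gO(r)$ is bi-invariant: for $Y\Omega_1, Y\Omega_2 \in T_Y\gO(r)$ with $\Omega_i$ skew-symmetric, $\langle Y\Omega_1, Y\Omega_2\rangle = \trace(\Omega_1^\top\Omega_2)$. Hence the geodesic emanating from $Y_0$ in direction $\xi = Y_0\Omega$ is $Y(t) = Y_0\,\expm(t\Omega)$, and it suffices to show that $g(t) := \widetilde F_X(Y(t))$ satisfies $|g''(t)| \le (C_0+C_1)\|\xi\|^2$ for every such geodesic.

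First I would make the dependence on $Y$ explicit. Writing $\widetilde X := PX$ partitioned so that $\widetilde X_1 \in \sR^{r\times p}$ is its first $r$ rows and $\widetilde X_2$ the remaining rows, the argument of $F$ becomes the affine map $Z(Y) := U(Y)X = P^\top\big[(Y\widetilde X_1)^\top,\ \widetilde X_2^\top\big]^\top$. Differentiating along the geodesic and using $\tfrac{d}{dt}Y(t) = Y(t)\Omega$ gives $\dot Z(t) = P^\top\big[(Y(t)\Omega\widetilde X_1)^\top,\ 0\big]^\top$ and $\ddot Z(t) = P^\top\big[(Y(t)\Omega^2\widetilde X_1)^\top,\ 0\big]^\top$.

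Next, the chain rule yields $g''(t) = \langle \nabla^2 F(Z(t))[\dot Z(t)], \dot Z(t)\rangle + \langle \nabla F(Z(t)), \ddot Z(t)\rangle$. I would bound the two terms separately using Assumption~\ref{assump:bound_grad_hess}: the Hessian term by $C_1\|\dot Z(t)\|^2$ and the gradient term by $C_0\|\ddot Z(t)\|$. It then remains to control $\|\dot Z(t)\|$ and $\|\ddot Z(t)\|$ in terms of $\|\xi\| = \|\Omega\|$. Since $P^\top$ and $Y(t)$ are orthogonal and $\widetilde X_1$ is a submatrix of the column-orthonormal $\widetilde X$ (so $\|\widetilde X_1\|_{\mathrm{op}} \le 1$), I obtain $\|\dot Z(t)\| = \|Y(t)\Omega\widetilde X_1\| \le \|\Omega\| = \|\xi\|$ and $\|\ddot Z(t)\| = \|Y(t)\Omega^2\widetilde X_1\| \le \|\Omega^2\| \le \|\Omega\|_{\mathrm{op}}\|\Omega\| \le \|\xi\|^2$. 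Combining these yields $|g''(t)| \le (C_1 + C_0)\|\xi\|^2$, which is exactly the claimed smoothness.

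The main obstacle is essentially bookkeeping with matrix norms: selecting the right submultiplicative inequalities so the second-order contribution collapses to $\|\xi\|^2$, in particular the steps $\|\Omega^2\| \le \|\Omega\|_{\mathrm{op}}\|\Omega\|$ and $\|\widetilde X_1\|_{\mathrm{op}} \le 1$, together with confirming that the Euclidean metric renders $\gO(r)$ bi-invariant so that the geodesics take the clean exponential form used above. I do not expect any genuinely hard analytic step beyond these.
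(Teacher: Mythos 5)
Your proof is correct, but it takes a genuinely different route from the paper's. The paper works at the level of the Riemannian Hessian operator: it computes the Euclidean gradient and Hessian of the pullback, $\nabla \widetilde F_X(Y) = P(r)\nabla F(U(Y)X)X^\top P(r)^\top$ and $\nabla^2\widetilde F_X(Y)[V] = P(r)\nabla^2 F(U(Y)X)[U(V)X]X^\top P(r)^\top$, inserts them into the embedded-submanifold formula $\hess \widetilde F_X(Y)[V] = {\rm P}_Y\big(\nabla^2\widetilde F_X(Y)[V] - V\{Y^\top\nabla\widetilde F_X(Y)\}_{\rm S}\big)$, proves the tangent projection ${\rm P}_Y$ is nonexpansive, and bounds the two terms by $C_1\|V\|$ and $C_0\|V\|$. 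You instead exploit the Lie-group structure of $\gO(r)$: bi-invariance of the Frobenius metric gives geodesics $Y_0\expm(t\Omega)$ in closed form, and you bound $g''(t)$ of the scalar composition by the chain rule, with the Hessian term contributing $C_1\|\xi\|^2$ and the gradient term paired against the extrinsic acceleration $\ddot Z$ contributing $C_0\|\xi\|^2$. The two arguments are equivalent in what they establish (a uniform bound on $\langle\hess\widetilde F_X(\gamma)[\dot\gamma],\dot\gamma\rangle$ along geodesics is the same as a bound on $g''$), and indeed your $\langle\nabla F(Z),\ddot Z\rangle$ term is exactly the role played by the correction term $V\{Y^\top\nabla\widetilde F_X(Y)\}_{\rm S}$ in the paper's Hessian formula. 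What each buys: your version is more elementary and self-contained (no Riemannian Hessian formula, no projection nonexpansiveness lemma), but it leans on the explicit one-parameter-subgroup geodesics, which is special to compact matrix groups and would not transfer to, e.g., $\St(n,p)$ itself; the paper's version bounds the Hessian operator pointwise in every tangent direction, which is the form most directly reusable in other manifold arguments. One shared caveat (a gap in neither proof relative to the other): both establish geodesic smoothness, i.e., the descent inequality along the exponential map, whereas the paper later applies the inequality with a general retraction (QR by default); closing that gap requires a standard additional argument in either formulation.
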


Further, we show the following lemma that relates the gradient of 
$ F_X$ at identity to gradient of $F(X)$. 

\begin{lemma}
\label{lemma:grad_fk_grad_f}
For any $X \in \St(n,p)$, we can show $\| \grad F_X(I_n) \|^2 \geq \frac{1}{2} \|  \grad F(X)\|^2$. 
\end{lemma}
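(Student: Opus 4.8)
The plan is to compute both Riemannian gradients explicitly and then compare their squared norms after a convenient orthonormal decomposition of the Euclidean gradient. Write $G := \nabla F(X)$. Since the tangent space of $\gO(n)$ at the identity is the space of skew-symmetric matrices and we use the Euclidean metric, the Riemannian gradient of $F_X(U) = F(UX)$ at $I_n$ is the skew-symmetric part of the ambient gradient $\nabla F_X(I_n) = G X^\top$; that is, $\grad F_X(I_n) = \frac{1}{2}(G X^\top - X G^\top)$, which is exactly the quantity appearing in \eqref{eq:grad} with $P_k = I_n$. The Stiefel gradient is $\grad F(X) = G - X\{X^\top G\}_{\rm S}$ as recalled in Section \ref{sect:prelim}.

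The key step is to split $G$ into its tangential and normal parts relative to the column space of $X$. Let $X_\perp \in \sR^{n \times (n-p)}$ be any orthonormal complement, so that $Q := [\,X \;\; X_\perp\,] \in \gO(n)$, and set $A := X^\top G \in \sR^{p \times p}$ and $B := X_\perp^\top G \in \sR^{(n-p)\times p}$, giving $G = X A + X_\perp B$. Substituting into the Stiefel gradient and using $\{X^\top G\}_{\rm S} = \frac{1}{2}(A + A^\top)$ yields $\grad F(X) = X\cdot \frac{1}{2}(A - A^\top) + X_\perp B$. Because $X$ and $X_\perp$ have orthonormal columns spanning orthogonal subspaces, this immediately gives $\|\grad F(X)\|^2 = \frac{1}{4}\|A - A^\top\|^2 + \|B\|^2$.

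For the orthogonal-group gradient, I would exploit the invariance of the Frobenius norm under conjugation by $Q$. Substituting $G = XA + X_\perp B$ into $G X^\top - X G^\top$ and conjugating by $Q$ turns it into the block matrix $\begin{bmatrix} A - A^\top & -B^\top \\ B & 0 \end{bmatrix}$, whose squared Frobenius norm is $\|A - A^\top\|^2 + 2\|B\|^2$. Hence $\|\grad F_X(I_n)\|^2 = \frac{1}{4}\|A - A^\top\|^2 + \frac{1}{2}\|B\|^2$. Comparing the two expressions, $\|\grad F_X(I_n)\|^2 - \frac{1}{2}\|\grad F(X)\|^2 = \frac{1}{8}\|A - A^\top\|^2 \geq 0$, which proves the claim.

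I expect the main obstacle to be the bookkeeping in the last step: correctly tracking that the normal component $B$ contributes $2\|B\|^2$ to $\|G X^\top - X G^\top\|^2$, since it appears in both off-diagonal blocks, yet contributes only $\|B\|^2$ to $\|\grad F(X)\|^2$. This mismatch is precisely what produces the constant $\frac{1}{2}$, and the same computation shows it is tight, being attained in the limit where $G$ is purely normal ($A = A^\top$, e.g. $A = 0$). Care is needed only to verify the skew-symmetric block structure and that the cross terms between the $X$- and $X_\perp$-blocks vanish by orthogonality.
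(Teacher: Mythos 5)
Your proof is correct, and it takes a genuinely different route from the paper's. The paper works purely with trace identities: it expands $\|\grad F_X(I_n)\|^2$ and $\|\grad F(X)\|^2$ in terms of $\trace(X \nabla F(X)^\top X \nabla F(X)^\top)$ and $\trace(\nabla F(X)^\top X X^\top \nabla F(X))$, reduces the claim to the inequality $\trace(X \nabla F(X)^\top X \nabla F(X)^\top) \le \trace(\nabla F(X)^\top X X^\top \nabla F(X))$, and proves that by expanding $\|\nabla F(X)^\top X - X^\top \nabla F(X)\|^2 \ge 0$. You instead introduce an orthonormal complement $X_\perp$, split $\nabla F(X) = XA + X_\perp B$ into tangential and normal parts, and compute both gradients in block form, obtaining $\|\grad F(X)\|^2 = \tfrac{1}{4}\|A - A^\top\|^2 + \|B\|^2$ and $\|\grad F_X(I_n)\|^2 = \tfrac{1}{4}\|A - A^\top\|^2 + \tfrac{1}{2}\|B\|^2$ (both expressions check out, including the conjugation by $Q = [\,X \;\; X_\perp\,]$ and the resulting block skew matrix). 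Your approach buys more than the paper's: it gives the exact identity $\|\grad F_X(I_n)\|^2 - \tfrac{1}{2}\|\grad F(X)\|^2 = \tfrac{1}{8}\|A - A^\top\|^2$, which simultaneously proves the bound, shows the constant $\tfrac{1}{2}$ is tight (equality when $A$ is symmetric), and incidentally also yields the reverse sandwich $\|\grad F_X(I_n)\|^2 \le \|\grad F(X)\|^2$. The paper's argument avoids introducing $X_\perp$ and stays entirely within ambient-space algebra, but yields only the one-sided inequality with no information about when it is sharp. The only point worth noting in your write-up is the degenerate case $p = n$, where $X_\perp$ is empty; your formulas remain valid with $B$ vacuous, so this is a remark rather than a gap.
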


Apart from general non-convex functions, we also analyze convergence of RSDM under (local) Riemannian Polyak-\L ojasiewicz (PL) condition. 


\begin{definition}[Riemannian Polyak-\L ojasiewicz]\label{def:PL}
For a subset $\mathcal{U} \subseteq \St(n,p)$,  a smooth function $F: \mathcal{U} \rightarrow \sR$ satisfies the Riemannian Polyak-\L ojasiewicz (PL) condition on $\mathcal{U}$ if there exists $\mu >0$ such that $\forall X \in \mathcal{U}$, we have $F(X)-\min_{X\in \mathcal{U}} F(X) \le \frac{1}{2\mu}\|\grad F(X)\|^2$.
\end{definition}


\subsection{Main Results}
\label{sect:main_results}

This section derives theoretical guarantees for the proposed method. A summary of the main results are presented in Table \ref{table:main_results}.
We first give the following proposition that relates the gradient of $\widetilde F$ to gradient of $F$ at identity.

\begin{table*}[t!]
\centering
\renewcommand{\arraystretch}{1.2} 
\caption{Summary of the main results under deterministic and stochastic settings, with both orthogonal (ortho.) and permutation (permu.) sampling. The global and local rates refer to the convergence under general nonconvex and PL conditions respectively. Size of Stiefel manifold is $n \times p$ and $k$ is the iteration number. Function constants $L, \mu$ are ignored.}
\vspace{-5pt}
\resizebox{0.94\textwidth}{!}{
\begin{tabular}{c|c|c|c|c|c}
\hline 
& &   \multicolumn{2}{c|}{\textsc{Expectation}}  &  \multicolumn{2}{c}{\textsc{High Probability}} \\ \cline{3-6}
      &   & \textsc{Global} & \textsc{Local} (\textsc{PL}) & \textsc{Global} & \textsc{Local} (\textsc{PL}) \\ \hline
\multirow{2}{*}{\makecell{\textbf{Deterministic} \\ (Theorem \ref{thm:main_convergence}, \ref{thm:main_convergenceprob})}} & Ortho.                     & \multirow{2}{*}{$O\big( \frac{n^2}{r^2 k} \big)$}       & \multirow{2}{*}{$O \big (\exp(- \frac{ r^2}{ n^2} k) \big)$}       & $O( \frac{n^2}{r^2  k} )$              & $O \big( \exp( - \frac{r^2}{n^2} k ) \big)$                  \\ 
                      & Permu.                     &         &        &  $O( \frac{n^2}{r^2  k} \binom{n}{r} )$              &  $O \big(  \exp ( - \frac{r^2}{n^2} \binom{n}{r}^{-1} k)  \big)$                   \\ \hline
\multirow{2}{*}{\makecell{\textbf{Stochastic} \\ (Theorem \ref{them:stochastic})}}   & Ortho.                      & \multirow{2}{*}{$O \left( \frac{n^2}{r^2 \sqrt{k}} \right)$}         & \multirow{2}{*}{$O \big(  \exp(- \frac{ r^2}{ n^2} k)   +  \frac{n^2 \sigma^2}{r^2} \big)$}        &            \multicolumn{2}{c}{\multirow{2}{*}{---}}               \\  
                      & Permu.                     &        &         &     \multicolumn{2}{c}{}                       \\ \hline 
\end{tabular}
}
\label{table:main_results}
\vspace*{-5pt}
\end{table*}





\begin{proposition}\label{prop:1}
Assume that $P$ is uniformly sampled from $\mathcal{P}(n)$ or uniformly sampled from $\gO(n)$. Then for any $X \in \St(n,p)$, we have $\mathbb{E}\|\grad \widetilde F_X(I_r)\|^2=\frac{r(r-1)}{n(n-1)} \|\grad F_X(I_n)\|^2$,
where the expectation is with respect to the randomness in $P$. 
\end{proposition}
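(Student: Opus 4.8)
The plan is to start from the explicit gradient formula \eqref{eq:grad}, which in the generic notation reads $\grad\widetilde F_X(I_r) = P(r)\,A\,P(r)^\top$ with $A \coloneqq \grad F_X(I_n)$ and $P(r)\in\sR^{r\times n}$ the first $r$ rows of $P$. The structural fact I would exploit is that $A$ is \emph{skew-symmetric}: being a Riemannian gradient on $\gO(n)$ at the identity, it lies in $T_{I_n}\gO(n)=\Skew(n)$ (indeed \eqref{eq:grad} already displays it as a skew part). Writing $p_1^\top,\dots,p_r^\top$ for the rows of $P(r)$, the $(i,j)$ entry of $P(r)AP(r)^\top$ is $p_i^\top A p_j$, so that
\[
\|\grad\widetilde F_X(I_r)\|^2 = \sum_{i,j=1}^r (p_i^\top A p_j)^2 = \sum_{\substack{i,j=1\\ i\neq j}}^r (p_i^\top A p_j)^2 ,
\]
where the diagonal terms vanish since $p_i^\top A p_i=0$ for skew-symmetric $A$. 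Because both the Haar and the permutation distributions are invariant under permuting the rows of $P$, the quantity $\mathbb{E}[(p_i^\top A p_j)^2]$ is the same for every ordered pair $i\neq j$; it then suffices to evaluate it once and multiply by the count $r(r-1)$ of such pairs. The proposition reduces to proving $\mathbb{E}[(p_1^\top A p_2)^2]=\tfrac{1}{n(n-1)}\|A\|^2$ in each case, since $\|A\|^2 = \|\grad F_X(I_n)\|^2$.

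The permutation case is elementary and I would dispatch it first: the rows of $P(r)$ are distinct standard basis vectors $e_{\pi(1)},\dots,e_{\pi(r)}$, so $(p_1^\top A p_2)^2 = A_{\pi(1)\pi(2)}^2$, and as $(\pi(1),\pi(2))$ is uniform over the $n(n-1)$ ordered pairs of distinct indices, the expectation equals $\tfrac{1}{n(n-1)}\sum_{a\neq b}A_{ab}^2=\tfrac{1}{n(n-1)}\|A\|^2$, the diagonal of $A$ being zero.

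The orthogonal (Haar) case is the main obstacle, because $p_1,p_2$ are no longer independent and I must control a fourth moment of a random orthonormal pair. I would set $T_{acbd}\coloneqq\mathbb{E}[(p_1)_a(p_1)_c(p_2)_b(p_2)_d]$ and argue that, by simultaneous rotation invariance of $(p_1,p_2)$ and the symmetries $a\leftrightarrow c$, $b\leftrightarrow d$, the tensor $T$ is isotropic, hence $T_{acbd}=\alpha\,\delta_{ac}\delta_{bd}+\beta(\delta_{ab}\delta_{cd}+\delta_{ad}\delta_{cb})$ for scalars $\alpha,\beta$ depending only on $n$. Two constraints fix them: contracting $a=c$ and using $\|p_1\|=1$ with $\mathbb{E}[(p_2)_b(p_2)_d]=\tfrac1n\delta_{bd}$ gives $\alpha n+2\beta=\tfrac1n$, while $\mathbb{E}[(p_1^\top p_2)^2]=0$ (orthogonality of the rows) gives $\alpha+(n+1)\beta=0$; solving yields $\alpha=\tfrac{n+1}{n(n-1)(n+2)}$, $\beta=-\tfrac{1}{n(n-1)(n+2)}$. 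Contracting $\sum_{a,b,c,d}A_{ab}A_{cd}T_{acbd}$ and invoking skew-symmetry then collapses the computation: the $\delta_{ac}\delta_{bd}$ term gives $\alpha\|A\|^2$, the $\delta_{ab}\delta_{cd}$ term vanishes because $\mathrm{tr}\,A=0$, and the $\delta_{ad}\delta_{cb}$ term contributes $\beta\sum_{a,b}A_{ab}A_{ba}=-\beta\|A\|^2$, leaving $\mathbb{E}[(p_1^\top A p_2)^2]=(\alpha-\beta)\|A\|^2=\tfrac{1}{n(n-1)}\|A\|^2$, in agreement with the permutation case. (Alternatively, one could obtain these moments directly from orthogonal Weingarten calculus.) Multiplying by $r(r-1)$ completes both cases and gives the stated identity.
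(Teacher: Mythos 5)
Your proof is correct, and in the Haar case it takes a genuinely different route from the paper's. Structurally, you first exploit row-exchangeability of both distributions to reduce everything to the single moment $\mathbb{E}[(p_1^\top A p_2)^2]$ of one ordered pair of rows, whereas the paper carries the full sums throughout. In the permutation case this difference is cosmetic: the paper's count of truncated permutations sending a fixed slot pair $(i,j)$ to a fixed index pair $(k,l)$ (their $n_{k,l}=\frac{r(r-1)}{2}\frac{(n-2)!}{(n-r)!}$) is exactly your observation that $(\pi(1),\pi(2))$ is uniform over the $n(n-1)$ ordered pairs of distinct indices. The real divergence is the orthogonal case. The paper kills the off-pattern terms using only discrete invariances of the Haar measure (flipping the sign of a column, permuting rows/columns), and then never computes the two surviving moments $\mathbb{E}[P_{i1}^2P_{j2}^2]$ and $\mathbb{E}[P_{i1}P_{j1}P_{i2}P_{j2}]$ individually: it derives the two linear relations coming from $\sum_k P_{ik}^2=1$ and $\sum_k P_{ik}P_{jk}=0$ and eliminates the unknown $\mathbb{E}[P_{i1}^2P_{j1}^2]$ from the relevant difference. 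You instead invoke the classification of $O(n)$-invariant fourth-order tensors to posit $T_{acbd}=\alpha\,\delta_{ac}\delta_{bd}+\beta(\delta_{ab}\delta_{cd}+\delta_{ad}\delta_{cb})$ and pin down $\alpha,\beta$ explicitly from the same two orthonormality constraints; skew-symmetry of $A$ (zero diagonal, zero trace, $A_{ba}=-A_{ab}$) then collapses the contraction to $(\alpha-\beta)\|A\|^2=\frac{1}{n(n-1)}\|A\|^2$, matching the paper. Your route is shorter and more conceptual, and it delivers the explicit fourth moments as a byproduct (more than is needed); the cost is reliance on a nontrivial external ingredient --- the first fundamental theorem of invariant theory for $O(n)$ (or, as you note, orthogonal Weingarten calculus) --- where the paper's argument, though longer, is elementary and self-contained, and in fact sidesteps computing any individual fourth moment at all. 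Both proofs are anchored at the same starting point, $\grad\widetilde F_X(I_r)=P(r)\,\grad F_X(I_n)\,P(r)^\top$ with a skew-symmetric middle factor, and your two verification identities ($\alpha n+2\beta=\tfrac1n$ and $\alpha+(n+1)\beta=0$) check out.
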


\begin{remark}[Proof techniques of Proposition \ref{prop:1}]
We obtain the same rate for both the permutation and orthogonal sampling strategies. Nevertheless, the proof techniques are largely different. In the permutation case, the proof boils down to counting the number of permutations that satisfy some criterion and in the orthogonal case, we have to compute, for all set of indices, $\mathbb{E}[P_{ik_1}P_{jl_1}P_{ik_2}P_{jl_2}]$ for $i\neq j$. This is achieved by leveraging the rotational invariance of the distribution of $P$. 
\end{remark}

Proposition \ref{prop:1} shows that the submanifold gradient is on the order of 
$O(r^2n^{-2})$ of the full-space gradient. In contrast, the Euclidean subspace gradient method  \citep{kozak2021stochastic} achieves a scaling of  $O(rn^{-1})$. This is because our proposed submanifold approach requires applying the projection matrix $P_k$ twice, whereas the Euclidean subspace method requires only a single $P_k$.


\subsubsection{Convergence in Expectation}

\begin{theorem}
\label{thm:main_convergence}
When $P_k$ is sampled uniformly from $\gP(n)$ or $\gO(n)$, under Assumption \ref{assump:bound_grad_hess} and select $\eta = \frac{1}{L}$ with $L = C_0 + C_1$, we obtain that for all $k\ge 1$,
\begin{align*}
    \min_{i = 0,...,k-1}  \mathbb{E}[\|\grad F(X_i)\|^2] \le \frac{4L \Delta_0}{k} \frac{n(n-1) }{r(r-1)}, 
\end{align*}
where we denote $\Delta_0 = F(X_0) - F^*$. {Suppose further $X_k$ converges to a neighborhood $\mathcal{U}$ that contains an (isolated) local minimizer $X^*$. Further, $F$ satisfies Riemannian PL condition on $\mathcal{U}$. Let $k_0$ be that $X_{k_0} \in \mathcal U$. Then we have $X_{k_0 + k} \in \mathcal{U}$, $\forall k \geq 1$} and  
$\mathbb{E}[F(X_{k_0 + k}) -F(X^*) ]\le \exp\big(- \frac{\mu}{2 L} \frac{r(r-1)}{n(n-1)} k \big)\mathbb{E}[F(X_{k_0})-F(X^*)]$.
\end{theorem}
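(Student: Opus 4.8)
The plan is to prove the two statements of Theorem~\ref{thm:main_convergence} separately, using the smoothness of $\widetilde F_k$ from Lemma~\ref{lemma:ftilde_smooth}, the gradient comparison in Proposition~\ref{prop:1}, and the gradient lower bound in Lemma~\ref{lemma:grad_fk_grad_f}. For the \textbf{global (nonconvex) rate}, I would begin by applying the standard descent inequality for an $L$-smooth function on $\gO(r)$. Since $Y_k = \Retr_{I_r}(-\eta\,\grad\widetilde F_k(I_r))$ and $\widetilde F_k$ is $L$-smooth with $L = C_0 + C_1$, and since $F(X_{k+1}) = \widetilde F_k(Y_k)$ while $F(X_k) = \widetilde F_k(I_r)$, the smoothness descent lemma with $\eta = 1/L$ gives
\begin{equation*}
    \widetilde F_k(Y_k) \le \widetilde F_k(I_r) - \frac{1}{2L}\|\grad\widetilde F_k(I_r)\|^2.
\end{equation*}
Rearranging, $F(X_k) - F(X_{k+1}) \ge \frac{1}{2L}\|\grad\widetilde F_k(I_r)\|^2$. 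Taking conditional expectation over the randomness of $P_k$ (conditioned on $X_k$) and applying Proposition~\ref{prop:1} converts $\mathbb{E}\|\grad\widetilde F_k(I_r)\|^2$ into $\frac{r(r-1)}{n(n-1)}\|\grad F_k(I_n)\|^2$, and then Lemma~\ref{lemma:grad_fk_grad_f} lower-bounds this by $\frac{1}{2}\frac{r(r-1)}{n(n-1)}\|\grad F(X_k)\|^2$. Taking full expectation and telescoping from $i=0$ to $k-1$, the left-hand side sums to at most $\Delta_0 = F(X_0) - F^*$, yielding
\begin{equation*}
    \frac{1}{k}\sum_{i=0}^{k-1}\mathbb{E}\|\grad F(X_i)\|^2 \le \frac{4L\Delta_0}{k}\frac{n(n-1)}{r(r-1)},
\end{equation*}
which bounds the minimum and gives the stated global rate.

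For the \textbf{local PL rate}, I would first argue that once $X_{k_0}\in\mathcal U$ the iterates stay in $\mathcal U$. This is the invariance claim $X_{k_0+k}\in\mathcal U$ for all $k\ge1$: I would use that the per-step decrease in $F$ (established above pointwise, since the descent inequality holds for every realization of $P_k$, not just in expectation) keeps $F(X_{k+1})\le F(X_k)$, so the iterates cannot escape a sublevel-set neighborhood of the isolated minimizer $X^*$; this should be arguable by an induction showing the iterates remain in a sublevel set contained in $\mathcal U$. Granting invariance, on $\mathcal U$ the PL inequality gives $\|\grad F(X_k)\|^2 \ge 2\mu(F(X_k) - F(X^*))$. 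Combining the pointwise descent inequality with the chain Proposition~\ref{prop:1} $\to$ Lemma~\ref{lemma:grad_fk_grad_f} $\to$ PL, subtracting $F(X^*)$ from both sides, and taking conditional expectation yields a contraction of the form
\begin{equation*}
    \mathbb{E}[F(X_{k+1}) - F(X^*)\mid X_k] \le \Big(1 - \frac{\mu}{2L}\frac{r(r-1)}{n(n-1)}\Big)\big(F(X_k) - F(X^*)\big).
\end{equation*}
Iterating the contraction factor over $k$ steps from $k_0$ and using $1 - t \le \exp(-t)$ gives the claimed exponential decay $\exp\!\big(-\frac{\mu}{2L}\frac{r(r-1)}{n(n-1)}k\big)$.

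I expect the main obstacle to be the \textbf{invariance argument} that $X_{k_0+k}\in\mathcal U$ for all $k$, rather than the algebra of the descent recursions. The descent inequalities and the PL contraction follow mechanically once Lemmas~\ref{lemma:ftilde_smooth}, \ref{lemma:grad_fk_grad_f} and Proposition~\ref{prop:1} are in hand. The delicate point is that $\mathcal U$ is only a neighborhood of an isolated local minimizer, so staying in $\mathcal U$ is a nontrivial trajectory-confinement statement: one must ensure the (random) step sizes of the update are small enough that $X_{k+1}$ does not jump out of $\mathcal U$ even though the decrease of $F$ only controls function values, not geodesic distance directly. I would handle this by choosing $\mathcal U$ to be a sublevel set $\{X : F(X) \le c\}$ component around $X^*$ on which PL holds, so that monotone decrease of $F$ (guaranteed pointwise by the descent inequality) automatically keeps iterates inside; care is needed to bound the single-step displacement so the iterate does not leave the connected component. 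A secondary subtlety is that the expectations in the PL statement are nested (the randomness accumulates over iterations), so I would use the tower property to turn the one-step conditional contraction into the stated $k$-step bound on the unconditional expectation $\mathbb{E}[F(X_{k_0+k}) - F(X^*)]$.
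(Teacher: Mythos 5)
Your treatment of the two quantitative claims coincides with the paper's proof: the descent inequality from Lemma \ref{lemma:ftilde_smooth} with $\eta = 1/L$, followed by Proposition \ref{prop:1} (conditional expectation over $P_k$) and Lemma \ref{lemma:grad_fk_grad_f}, then telescoping for the global rate; and for the local rate, the one-step contraction $\mathbb{E}[F(X_{k+1})-F(X^*)] \le \big(1 - \frac{\mu}{2L}\frac{r(r-1)}{n(n-1)}\big)\mathbb{E}[F(X_k)-F(X^*)]$ combined with $(1-a)^k \le \exp(-ak)$ and the tower property. This part of your proposal is correct and essentially identical to the paper's argument.

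The divergence---and the gap---is in the trajectory-confinement claim $X_{k_0+k} \in \mathcal{U}$ for all $k \ge 1$. You propose to take $\mathcal{U}$ to be a connected component of a sublevel set and rely on the pointwise monotone decrease of $F$, but you concede yourself that monotonicity alone does not prevent the iterate from jumping to a different component of the same sublevel set, and you do not supply the missing single-step displacement estimate. With the fixed step $\eta = 1/L$ this is a real issue: the per-step displacement is of order $\eta M \|\grad \widetilde F_k(I_r)\|$, which is only small once function values are already close to $F(X^*)$ (via smoothness and quadratic growth), so making the sublevel-set argument rigorous requires exactly the quantitative estimates you leave out. The paper closes this step by a different route: it invokes the almost-sure result $\liminf_{k\to\infty}\|\grad F(X_k)\|^2 = 0$ established in Theorem \ref{thm:main_convergenceprob}, uses that $X^*$ is an isolated local minimizer to extract a subsequence converging to $X^*$, deduces from pointwise monotone decrease that $F(X_k) \to F(X^*)$ almost surely, and then uses that the PL condition implies quadratic growth, $F(X_k) - F(X^*) \ge \frac{\mu}{2}\|X_k - X^*\|^2$ \citep{rebjock2024fast}, to conclude $\|X_k - X^*\|\to 0$ almost surely, hence the iterates remain in $\mathcal{U}$. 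To complete your version you would need either this route or a genuine capture argument bounding single-step displacement against the distance from the sublevel set to the boundary of $\mathcal{U}$; as written, the confinement step is asserted rather than proved.
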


Theorem \ref{thm:main_convergence} shows that the convergence rate for general non-convex functions maintains the same sublinear convergence, compared with the Riemannian gradient descent (RGD) \citep{boumal2023introduction}, albeit with an additional $O(n^2r^{-2})$ factors. Such a factor can be compensated by the lower per-iteration complexities of RSDM, which leads to a matching total complexity compared to RGD.

\begin{remark}[\textbf{Total complexity of RSDM to RGD}]
In Theorem \ref{thm:main_convergence}, we show the convergence is at most $O(n^2r^{-2}/k)$ for both sampling strategies. This implies that in order to reach an $\epsilon$-stationary point in expectation with $\min_{i =0,...,k-1} \sE[\| \grad F(X_i)\|^2] \leq \epsilon^2$, we require an iteration complexity of $O(n^2r^{-2} \epsilon^{-2})$, where per-iteration complexity is either $O(npr)$ for orthogonal sampling  or $O(nr^2)$ for permutation sampling strategy for permutation sampling, as analyzed in Section \ref{sect:sample_complexity}. This gives a total complexity of at least $O(n^3 \epsilon^{-2})$.

This suggests RSDM matches the $O(np^2 \epsilon^{-2})$ complexity of Riemannian gradient descent (RGD) in the regime when $p \ge C n$ for some constant $C > 0$. This corresponds to the challenging regime where the retraction cost dominates other matrix operations. In contrast, when $p \ll n$, the cost of retraction becomes relatively negligible. 
\end{remark}






\subsubsection{Convergence in High Probability}

Theorem \ref{thm:main_convergence} suggests both permutation and orthogonal sampling guarantee the same convergence rate in expectation. 
However, we show in the following theorem that orthogonal sampling achieves much tighter convergence bound in high probability compared to the permutation sampling.

\begin{theorem}
\label{thm:main_convergenceprob}
Under Assumption \ref{assump:bound_grad_hess} and $\eta = \frac{1}{L}$ with $L = C_0 + C_1$, if we use orthogonal sampling, we obtain and for all $k\ge 1$, with probability at least $1-\exp \big(- \frac{1}{8} (1-\tau(n,r))k \big)$, 
\begin{equation*}
    \min_{i = 0,...,k-1}  \|\grad F(X_i)\|^2 \le  \frac{16L\Delta_0}{k} \frac{n(n-1)}{(1-\tau(n,r))r(r-1)}  
\end{equation*}
where $\tau(n,r)=\exp \big(-\frac{r^2(r-1)^2}{2048 n^2(n-1)^2}\big)$.
If we use permutation sampling, with probability at least $1-\exp\big(-  \frac{1}{8} \binom{n}{r}^{-1}k \big)$,  
\begin{equation*}
    \min_{i = 0,...,k-1}  \|\grad F(X_i)\|^2 \le \frac{16L \Delta_0}{k} \frac{n(n-1)}{ r(r-1)}   \binom{n}{r}
\end{equation*}
Hence, in both cases, we have that almost surely, $\liminf\limits_{k \rightarrow \infty}\|\grad F(X_k)\|^2=0$. 

{
Under the same setting in Theorem \ref{thm:main_convergence}, suppose $X_{k_0} \in \mathcal{U}$.} Then for orthogonal sampling,  with probability at least $1 - \exp (- \frac{1}{8} (1-\tau(n,r))k )$, we have $F(X_{k_0+k}) -F(X^*)  \leq \exp \big(  -\frac{\mu}{8L}  \frac{r(r-1)}{n(n-1)} (1 - \tau(n,r)) k \big)\big(  F(X_{k})-F(X^*)\big)$. 
For permutation sampling, with probability at least $1-\exp\big(-  \frac{1}{8} \binom{n}{r}^{-1}k \big)$, we have $F(X_{k+1}) -F(X^*)  \leq \exp \big( -\frac{\mu}{4L}  \frac{r(r-1)}{n(n-1)} \binom{n}{r}^{-1} k \big)\big(  F(X_{k})-F(X^*)\big)$.
\end{theorem}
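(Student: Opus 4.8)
The plan is to reduce everything to the deterministic one-step descent inequality already used for Theorem~\ref{thm:main_convergence}. Because $U_k(I_r)=I_n$, we have $\widetilde F_k(I_r)=F(X_k)$ and $\widetilde F_k(Y_k)=F(X_{k+1})$, so the $L$-smoothness of $\widetilde F_k$ (Lemma~\ref{lemma:ftilde_smooth}) together with $\eta=1/L$ gives, pathwise,
\begin{equation*}
F(X_{k+1})\le F(X_k)-\tfrac{1}{2L}\,\|\grad\widetilde F_k(I_r)\|^2 .
\end{equation*}
Writing $g_k:=\|\grad\widetilde F_k(I_r)\|^2$, this shows $F$ is nonincreasing and that $\sum_{i<k}g_i\le 2L\Delta_0$ surely. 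Whereas the expectation proof averages $g_i$ directly through Proposition~\ref{prop:1}, in the high-probability regime I would instead count the iterations on which the random submanifold actually captures a fixed fraction of the gradient.

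The crux is a \emph{conditional success bound}: conditioning on $X_i$ (equivalently on the history $\mathcal F_i$), with probability at least $\rho$ the sampled gradient obeys $g_i\ge c\,\tfrac{r(r-1)}{n(n-1)}\|\grad F_i(I_n)\|^2$. For permutation sampling this is a pure averaging statement: the mean of $g_i$ over the $\binom{n}{r}$ equiprobable index subsets equals $\tfrac{r(r-1)}{n(n-1)}\|\grad F_i(I_n)\|^2$ by Proposition~\ref{prop:1}, so some subset attains at least this mean, and that subset is drawn with probability $\binom{n}{r}^{-1}$; hence $(c,\rho)=(1,\binom{n}{r}^{-1})$. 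For orthogonal sampling I would instead apply a Paley--Zygmund (second-moment) argument to $g_i=\|P_i(r)\grad F_i(I_n)P_i(r)^\top\|^2$: combining $\mathbb E[g_i\mid\mathcal F_i]$ from Proposition~\ref{prop:1} with an upper bound on $\mathbb E[g_i^2\mid\mathcal F_i]$ (obtained, as in the remark after Proposition~\ref{prop:1}, from the rotational invariance of the Haar measure) yields $\Pr\big(g_i\ge\tfrac12\,\mathbb E[g_i\mid\mathcal F_i]\,\big|\,\mathcal F_i\big)\ge 1-\tau(n,r)$, so $(c,\rho)=(\tfrac12,1-\tau(n,r))$ with the stated $\tau$.

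Given the success bound, let $B_i$ indicate the success event and $N_k=\sum_{i<k}B_i$. On a success, the descent inequality and Lemma~\ref{lemma:grad_fk_grad_f} force a decrease of at least a fixed multiple of $\tfrac{r(r-1)}{n(n-1)}\|\grad F(X_i)\|^2$, while non-successes never increase $F$; summing against $\sum_{i<k}(F(X_i)-F(X_{i+1}))\le\Delta_0$ gives $\min_{i<k}\|\grad F(X_i)\|^2\le \tfrac{CL\Delta_0}{N_k}\tfrac{n(n-1)}{r(r-1)}$. It remains to lower-bound $N_k$. Since the $B_i$ are only conditionally controlled, $\Pr(B_i=1\mid\mathcal F_i)\ge\rho$, I would avoid a plain Chernoff bound and instead use the exponential supermartingale estimate $\mathbb E[e^{-\lambda B_i}\mid\mathcal F_i]\le(1-\rho)+\rho e^{-\lambda}$, iterate the tower property, and optimize $\lambda$ to obtain $\Pr(N_k\le\rho k/2)\le\exp(-\rho k/8)$. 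Substituting $N_k\ge\rho k/2$ on the complementary event yields the stated global rates with $\rho=1-\tau(n,r)$ and $\rho=\binom{n}{r}^{-1}$ respectively; the $\liminf_k\|\grad F(X_k)\|^2=0$ claim then follows from Borel--Cantelli, since the failure probabilities $\exp(-\Omega(\rho k))$ are summable and the $O(1/k)$ bound is eventually valid almost surely.

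For the PL part I would keep the same bookkeeping but track the gap $F(X_{i+1})-F(X^*)$: on a success the descent inequality, Lemma~\ref{lemma:grad_fk_grad_f}, and the PL inequality $\|\grad F(X_i)\|^2\ge 2\mu(F(X_i)-F(X^*))$ give a contraction factor $1-\Omega(\tfrac{\mu}{L}\tfrac{r(r-1)}{n(n-1)})$, and non-successes leave the gap nonincreasing, so that the iterates stay in $\mathcal U$ exactly as argued in Theorem~\ref{thm:main_convergence}; multiplying over the $N_k$ successes and using $1-x\le e^{-x}$ with $N_k\ge\rho k/2$ produces the claimed linear rates. The main obstacle is the orthogonal-sampling success bound: the second-moment argument needs a sharp control of $\mathbb E[g_i^2\mid\mathcal F_i]$, which is a genuinely heavier Haar-moment computation than the one behind Proposition~\ref{prop:1}, and the constant $\tau(n,r)=\exp(-r^2(r-1)^2/(2048\,n^2(n-1)^2))$ must be carried through the Paley--Zygmund step exactly.
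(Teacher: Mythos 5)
Your proposal is correct, and it reproduces the paper's overall architecture (one-step descent from Lemma \ref{lemma:ftilde_smooth}, a per-iteration ``success'' event, counting successes, a Chernoff-type tail bound, and a per-success contraction induction for the PL part; your constants even match or slightly improve the stated ones). Where you genuinely diverge is the key concentration step for orthogonal sampling. The paper's Proposition \ref{prop:concentration} does not use a second-moment argument: it views $h(Q) = \|Q^\top M Q\|^2$ as a Lipschitz function on $\St(n,r)$, computes its Riemannian gradient to get a Lipschitz constant $8\|M\|^2$, and invokes a concentration inequality for the Haar measure on the Stiefel manifold to bound the lower tail below the mean, which is exactly where $\tau(n,r)$ with the constant $2048$ comes from. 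Your Paley--Zygmund route is a valid and in fact more elementary alternative, and the obstacle you flag at the end dissolves: no heavy Haar-moment computation is needed, because the deterministic contraction $g_i = \|P_i(r)\,\grad F_i(I_n)\,P_i(r)^\top\|^2 \le \|\grad F_i(I_n)\|^2$ gives $\mathbb{E}[g_i^2 \mid \mathcal{F}_i] \le \|\grad F_i(I_n)\|^2\,\mathbb{E}[g_i \mid \mathcal{F}_i]$, so Paley--Zygmund yields $\Pr\big(g_i \ge \tfrac12\mathbb{E}[g_i\mid\mathcal{F}_i]\,\big|\,\mathcal{F}_i\big) \ge \tfrac14\tfrac{r(r-1)}{n(n-1)}$, and since $1-\tau(n,r) \le \tfrac{r^2(r-1)^2}{2048\,n^2(n-1)^2} \le \tfrac14\tfrac{r(r-1)}{n(n-1)}$, this dominates the paper's success probability and the stated theorem follows verbatim (indeed with a better constant). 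The trade-off: the paper's Lipschitz-concentration argument is the one that would scale if one wanted per-iteration success probabilities approaching $1$, whereas Paley--Zygmund is intrinsically capped by $(\mathbb{E}g)^2/\mathbb{E}[g^2]$; with the paper's actual constants, however, $1-\tau(n,r) \le 1/2048$, so nothing is lost here. A further point in your favor: your conditional-MGF supermartingale treatment of the dependent indicators $B_i$ is more careful than the paper's proof, which cites a plain Chernoff bound even though its indicators $Y_i$ are only conditionally bounded in expectation and not independent; your argument is the rigorous justification of that step.
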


Theorem \ref{thm:main_convergenceprob} derives a high-probability bound for for both orthogonal and permutation sampling strategies.  For general nonconvex functions,  it can be seen that the high-probability bound for permutation sampling can be much worse than for the orthogonal sampling due to the additional binomial factor. In addition, compared to orthogonal sampling, permutation sampling requires the number of iteration to be significantly larger in order for the bound to hold with arbitrary probability. To see this,   we first can bound $\tau(n,r) \in (0, 0.9995)$ due to $r \leq n$. $0.0005 \leq 1 - \tau(n,r) \leq 1$ and thus $1 - \tau(n,r) = \Theta(1)$. In order to require the high probability bound to hold with probability $1- \delta'$ (for arbitrary $\delta' \in(0,1)$, we require $k \geq 4000 \log(1/\delta') \delta^{-2} = \widetilde \Omega(1)$ for the orthogonal case but require $k \geq 2 \binom{n}{r} \log(1/\delta')\delta^{-2} = \widetilde\Omega( \binom{n}{r} )$, which can be significantly large when $n \gg r$.

\begin{remark}[\textbf{The trade-off between efficiency and convergence}]
  The worse convergence guarantee of permutation sampling relative to orthogonal sampling in high probability indicates a trade-off between efficiency and convergence. Specifically for general nonconvex functions, permutation sampling requires only $O(nr^2)$ complexity per iteration while suffering from a convergence of $O\big(n^2r^{-2} \binom{n}{r} / k\big)$ in high probability. In contrast, orthogonal sampling requires $O(n pr)$ complexity per iteration but converges with  a rate of $O(n^2 r^{-2}/k)$ with high probability. Similar arguments also hold for local linear convergence under PL condition.  
\end{remark}

\subsection{Exact Convergence of RSDM}

\begin{algorithm}[t]
{
 \caption{RSDM with partial deterministic sampling}
 \label{rsdm_restart_algo}
 \begin{algorithmic}[1]
  \STATE Initialize $X_0 \in \St(n,p)$. 
  \FOR{$k = 0,...,K-1$}
  \STATE Sample $\{ P_k^s \}_{s=0}^{S-1}$ such that condition \eqref{eq:restart_condition} holds. 
  \STATE Set $X_k^0 = X_k$.
    \FOR{$s = 0, ..., S-1$}
    \STATE Let $\widetilde F_k^s(Y) = F(U_k^s(Y) X_k)$ where $U_k^s(Y)$ is defined in \eqref{u_k_y_def} with random matrix $P_k^s$.
    \STATE Compute Riemannian gradient $\grad \widetilde F_k^s(I_r)$. 
    \STATE Update $Y_k^s = \Retr_{I_r}(- \eta \, \grad \widetilde F_k^s(I_r))$. 
    \STATE Set $X_{k}^{s+1} = U_k^s(Y_k^s) X_k^s$. 
    \ENDFOR
    \STATE Set $X_{k+1} = X_k^S$. 
  \ENDFOR
 \end{algorithmic} 
}
\end{algorithm}

In the previous section, we have derived the convergence rate of RSDM in both expectation and with high probability. Here we adapt RSDM to achieve exact convergence. To this end, we adapt RSDM as described in Algorithm \ref{rsdm_restart_algo}. Specifically, we implement RSDM with a double-loop procedure, where for each outer iteration $k$, we sample projection matrices  $\{P_k^s\}_{s=0}^{S-1}$ for the $S$ inner iterations, such that the following non-degenerate condition \eqref{eq:restart_condition} holds:
\begin{equation}
    \sum_{s=0}^{S-1} \| P_{k}^s(r) \grad F_k(I_n) P_{k}^s(r)^\top \|^2 \geq C_p \| \grad F_k(I_n) \|^2 \label{eq:restart_condition}
\end{equation}
Such a non-degenerate condition over the selection of random matrix $P_k^s$ ensures the projected gradient does not vanish.  We show in Appendix \ref{sect:exact_convergence} that there exist certain sampling schemes that satisfy condition \eqref{eq:restart_condition}. 

We now derive the exact convergence guarantees as follows.

\begin{theorem}
\label{them:exact_rsdm}
Under Assumption \ref{assump:bound_grad_hess}, suppose RSDM is implemented as in Algorithm \ref{rsdm_restart_algo}. Then let $\eta = \frac{1}{L}$, with $L = C_0 + C_1$. We can obtain for all $k \geq 1$,
\begin{align*}
    \min_{i=0,...,K-1} \| \grad F(X_i) \|^2 \leq \frac{1}{K} \frac{2L \Delta_0}{C_p(1 + C_1^2 L^{-2} M^2 S^2)},
\end{align*}
where $S$ is the inner iteration number,  $M$ is a constant depending on the retraction, $C_p$ depends on the sampling.
\end{theorem}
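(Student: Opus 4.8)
The plan is to run the standard smooth-descent argument on each inner step and then telescope, the real work being to account for the fact that in Algorithm \ref{rsdm_restart_algo} every inner gradient is computed at the fixed outer base point $X_k$ while the rotations are applied to the moving iterate $X_k^s$. First I would record that, by construction and \eqref{eq:grad}, $\grad \widetilde F_k^s(I_r) = P_k^s(r)\,\grad F_k(I_n)\,P_k^s(r)^\top$, so that the summands appearing in the non-degenerate condition \eqref{eq:restart_condition} are exactly these inner submanifold gradients. Since $\widetilde F_k^s$ is $L$-smooth on $\gO(r)$ by Lemma \ref{lemma:ftilde_smooth} and $\eta=1/L$, the retraction descent inequality yields, for the virtual update based at $X_k$, a per-step decrease of at least $\frac{1}{2L}\|\grad\widetilde F_k^s(I_r)\|^2$.

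Next I would set $X_k^0=X_k$, $X_k^{s+1}=U_k^s(Y_k^s)X_k^s$, and write $F(X_{k+1})-F(X_k)=\sum_{s=0}^{S-1}\big(F(X_k^{s+1})-F(X_k^s)\big)$. Applying the smoothness inequality to the function $Y\mapsto F(U_k^s(Y)X_k^s)$ actually realized at step $s$, and expanding its gradient at $I_r$ as $\grad\widetilde F_k^s(I_r)$ plus a defect $\delta_s$, I would bound $\|\delta_s\|$ by the Lipschitz constant of the map $X\mapsto\grad F_X(I_n)=\tfrac12(\nabla F(X)X^\top-X\nabla F(X)^\top)$, which is finite under Assumption \ref{assump:bound_grad_hess} with the gradient-Lipschitz constant $C_1$, times the displacement $\|X_k^s-X_k\|$. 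The displacement is in turn controlled by $\|X_k^s-X_k\|\le M\sum_{j<s}\eta\,\|\grad\widetilde F_k^j(I_r)\|$, where $M$ is the retraction regularity constant bounding how far $U_k^j(Y_k^j)$ moves a point relative to the step length. Summing over $s$ and applying Cauchy--Schwarz across the $S$ inner steps converts these cross terms into a multiple of $\sum_s\|\grad\widetilde F_k^s(I_r)\|^2$ carrying the factor $C_1^2 L^{-2} M^2 S^2$, which is precisely the correction appearing in the statement.

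Having reduced $F(X_k)-F(X_{k+1})$ to a constant multiple of $\sum_{s}\|\grad\widetilde F_k^s(I_r)\|^2$, I would invoke the non-degenerate condition \eqref{eq:restart_condition} to lower bound this sum by $C_p\|\grad F_k(I_n)\|^2$, and then Lemma \ref{lemma:grad_fk_grad_f} to replace $\|\grad F_k(I_n)\|^2$ by $\tfrac12\|\grad F(X_k)\|^2$. This gives a per-outer-iteration decrease of the form $F(X_k)-F(X_{k+1})\ge \frac{C_p}{2L\,(1+C_1^2L^{-2}M^2S^2)}\|\grad F(X_k)\|^2$ up to the constants already absorbed above. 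Telescoping over $k=0,\dots,K-1$, using $F(X_K)\ge F^*$ so that the total decrease is at most $\Delta_0$, and bounding the sum below by $K$ times $\min_i\|\grad F(X_i)\|^2$ yields the stated $O(1/K)$ rate.

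The main obstacle is the second step: the gradients driving the inner updates are evaluated at $X_k$ rather than at the current inner iterate, so the composed update is not a genuine gradient trajectory and naive telescoping loses a second-order error at every inner step. The crux is to show this accumulated error stays proportional to the very quantity $\sum_s\|\grad\widetilde F_k^s(I_r)\|^2$ we are extracting, with the proportionality captured entirely by $C_1$, $M$, $\eta=1/L$ and $S$; this relies on the uniform displacement bound through the retraction constant $M$ and the Lipschitz continuity of $X\mapsto\grad F_X(I_n)$ guaranteed by Assumption \ref{assump:bound_grad_hess}. I would also invoke the construction deferred to Appendix \ref{sect:exact_convergence}, which exhibits samplings $\{P_k^s\}$ for which \eqref{eq:restart_condition} holds with a uniform $C_p>0$, ensuring the descent does not degenerate across outer iterations.
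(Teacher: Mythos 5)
Your proposal does not follow the paper's route, and the route you chose has a genuine gap. First, the difference: although Step 5 of Algorithm \ref{rsdm_restart_algo} literally says $\widetilde F_k^s(Y)=F(U_k^s(Y)X_k)$, the paper's proof explicitly redefines $\widetilde F_k^s(Y):=F(U_k^s(Y)X_k^s)$, i.e.\ every inner gradient is taken at the \emph{moving} iterate $X_k^s$. Each inner step is therefore a genuine Riemannian gradient step, and the descent inequality $F(X_k^{s+1})\le F(X_k^s)-\tfrac{1}{2L}\|\grad\widetilde F_k^s(I_r)\|^2$ holds unconditionally. The mismatch with the fixed-base gradients appearing in \eqref{eq:restart_condition} is handled \emph{separately}: writing $g_s=P_k^s(r)\grad F_k(I_n)P_k^s(r)^\top$ and $h_s=P_k^s(r)\grad F_k^s(I_n)P_k^s(r)^\top$, the paper bounds $\|g_s\|^2\le 2\|h_s\|^2+2\|g_s-h_s\|^2$ and uses the displacement estimate \eqref{oertojt} to obtain $\sum_s\|g_s\|^2\le\bigl(2+2C_1^2\eta^2M^2S^2\bigr)\sum_s\|h_s\|^2$ in \eqref{rttorkng}. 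There the factor $1+C_1^2L^{-2}M^2S^2$ only degrades how much of $\|\grad F_k(I_n)\|^2$ the certified decrease captures; it can never destroy the decrease itself.

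In your decomposition the stale-gradient defect $\delta_s=h_s-g_s$ instead enters \emph{inside} the descent inequality as a cross term: smoothness of the realized function gives $F(X_k^{s+1})\le F(X_k^s)-\tfrac{1}{2L}\|g_s\|^2+\tfrac{1}{L}\|\delta_s\|\,\|g_s\|$, and after your Cauchy--Schwarz/Young step the per-outer-iteration decrease is at best of the form
\begin{equation*}
  F(X_k)-F(X_{k+1})\;\ge\;\Bigl(\tfrac{1}{4L}-c\,\tfrac{C_1^2M^2S^2}{L^3}\Bigr)\sum_{s=0}^{S-1}\|g_s\|^2 .
\end{equation*}
The correction thus appears as $1-O\bigl(C_1^2L^{-2}M^2S^2\bigr)$, with the \emph{opposite sign} to the factor $1+C_1^2L^{-2}M^2S^2$ in the statement, and the bound is vacuous (no guaranteed decrease at all) unless $S\lesssim L/(C_1M)$. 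No such restriction is available, and it is violated catastrophically by the very sampling scheme that makes \eqref{eq:restart_condition} hold, namely Proposition \ref{prop:exact_conv} with $S=n!/(n-r)!$. So the claim in your last paragraph --- that the accumulated error converts into ``precisely the correction appearing in the statement'' --- is exactly where the argument breaks: the error is indeed proportional to $\sum_s\|g_s\|^2$, but it subtracts from the descent rather than rescaling it. (As a side remark, if one carries the paper's own chain of inequalities through correctly, the factor $1+C_1^2L^{-2}M^2S^2$ ends up multiplying $\Delta_0$ in the numerator; its appearance in the denominator in the stated theorem and in the appendix comes from an inverted inequality, but that slip is independent of the gap in your proposal.)
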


\subsection{Stochastic and Finite-sum Optimization}
\label{sect:stochastic}

This section adapts Algorithm \ref{rsgm_stiefel_algo} for stochastic optimization, defined as
\begin{align*}
    \min_{X \in \sR^{n \times p} : X^\top X = I_p} \{ F(X) \coloneqq \sE_{\xi} [f(X; \xi)] \},
\end{align*}
where we obtain noisy estimates of the gradients by querying $\xi$. In Algorithm~\ref{rsgm_stiefel_algo}, we replace the Riemannian gradient $\grad \widetilde F_k(I_r)$ with the stochastic gradient $\grad \tilde f_k(I_r; \xi_k)$, where we denote $\tilde f(Y; \xi) \coloneqq f(U_k (Y) X_k; \xi_k)$ for some randomly sampled $\xi_k$ at iteration $k$.

For convergence analysis, apart from Assumption \ref{assump:bound_grad_hess}, we also require the assumption of stochastic gradients being unbiased and having bounded variance, which is standard in analyzing stochastic algorithms \citep{ghadimi2013stochastic}.

\begin{assumption}
\label{assump:unbias_variance}
The stochastic gradient is unbiased, i.e., $\sE_\xi [ \nabla f(X; \xi) ] = \nabla F(X)$ and has bounded variance, i.e., $\sE_\xi [ \| \nabla f(X; \xi) - \nabla F(X) \|^2] \leq \sigma^2$, for all $X \in \St(n,p)$. 
\end{assumption}

\begin{figure*}[!th]
    \centering
    \subfloat[Procrustes  ($200,200$)]{\includegraphics[width=0.248\textwidth]{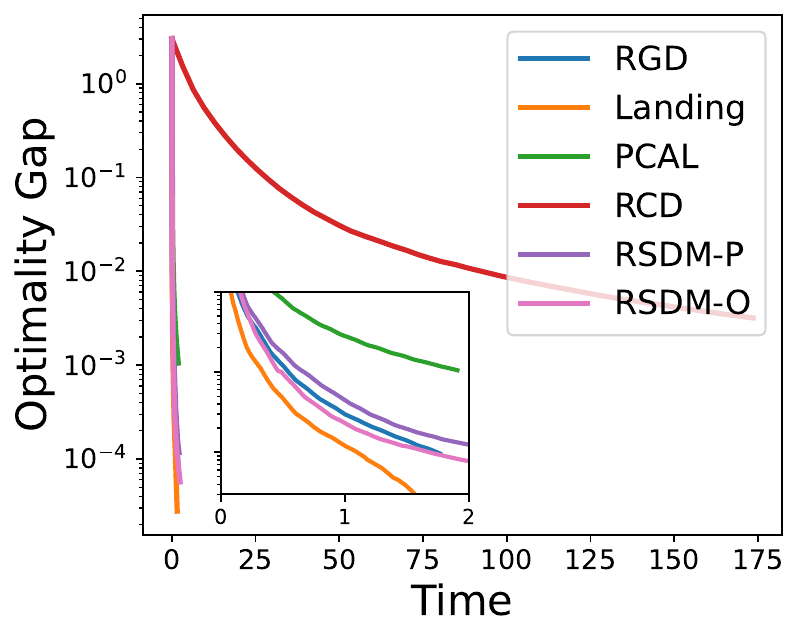}  }
    \subfloat[Procrustes ($2000,2000$)]{\includegraphics[width=0.248\textwidth]{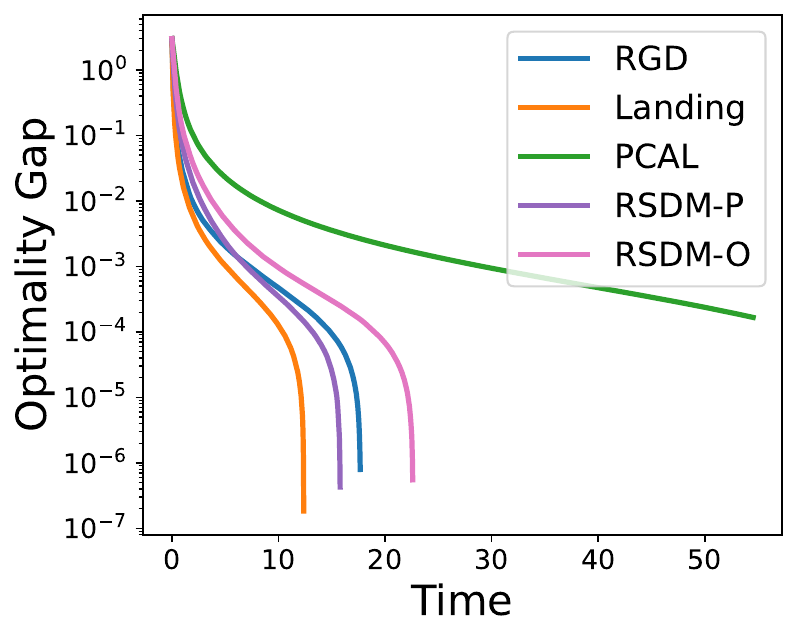}} 
    \subfloat[PCA ($2000, 1500$)]{\includegraphics[width=0.248\textwidth]{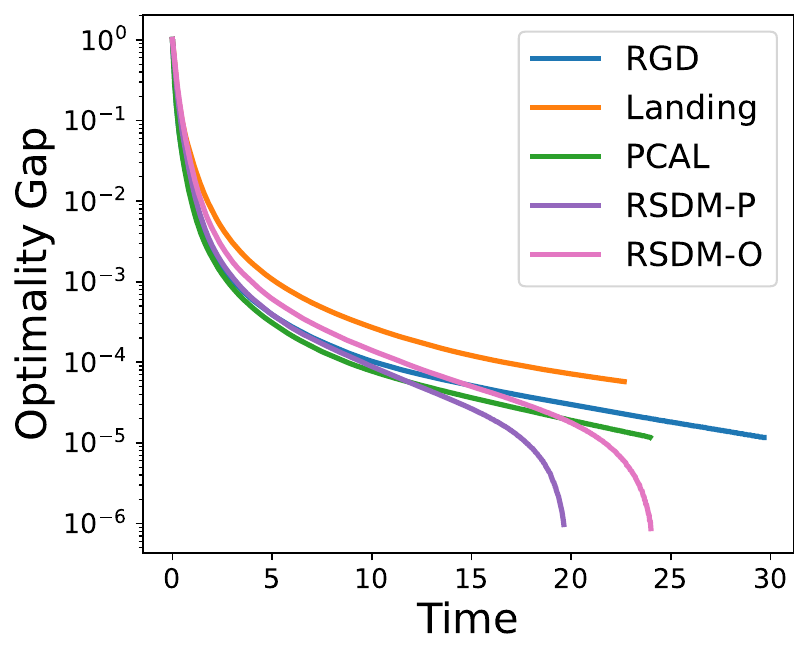}}
    \subfloat[PCA ($3000, 2500$)]{\includegraphics[width=0.248\textwidth]{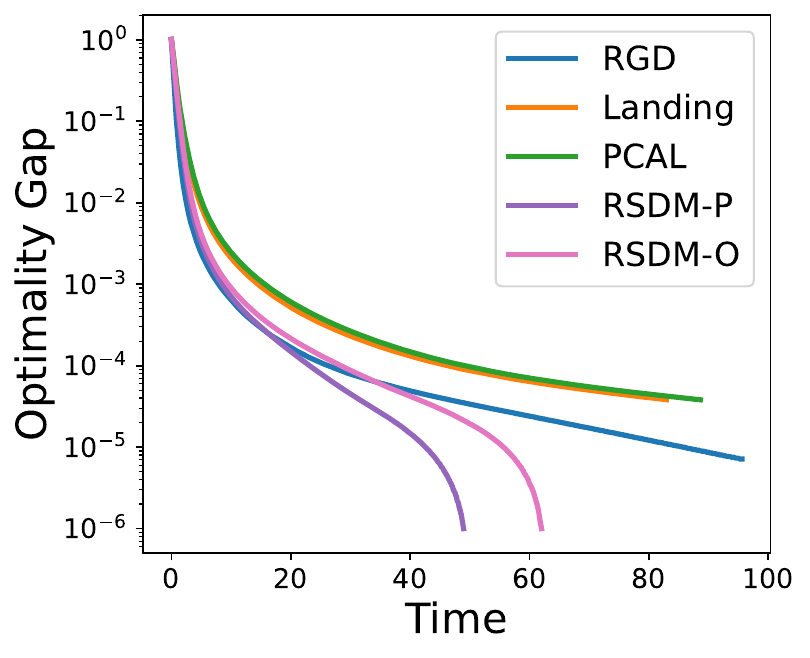}} 
    \caption{Experiments on Procrustes problem and PCA problem under various settings. The numbers in brackets represent the size of $n, p$. For the Procrustes problem, we see RSDM converges competitively against the best baselines due to the simplicity of the problem. For PCA problem, we see RSDM converges the fastest.}
    \label{figure_1}
\end{figure*}

\begin{theorem}
\label{them:stochastic}
Under Assumption \ref{assump:bound_grad_hess} and \ref{assump:unbias_variance}, suppose we choose $\eta = \min\{L^{-1}, \sqrt{\Delta_0/L} \sigma^{-1} K^{-1/2} \}$, where we denote $\Delta_0 = F(X_0) - F^*$. Then we can show 
\begin{align*}
    \min_{i = 0,...K-1} \sE \| \grad F(X_k) \|^2 \leq \frac{4n(n-1)}{r(r-1)} 
    \frac{L \Delta_0 + 2 \sigma \sqrt{\Delta_0 L K}}{K} 
\end{align*}
Suppose there exist $X_{k_0}, ..., X_{k_1} \in \mathcal{U}$ for some $k_1 > k_0$, where $\mathcal{U}$ is defined in Theorem \ref{thm:main_convergence}. Then we have $\mathbb{E}[F(X_{k_1}) -F(X^*) ]\le  \exp \big( - \frac{\mu}{2 L} \frac{r(r-1)}{n(n-1)} (k_1-k_0) \big)\mathbb{E}[F(X_{k_0})-F(X^*)] + \frac{ \sigma^2}{\mu} \frac{n(n-1)}{r(r-1)}$.
\end{theorem}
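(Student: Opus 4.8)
The plan is to run the standard stochastic Riemannian descent argument on each pullback $\widetilde F_k$, exploiting that $\widetilde F_k(I_r) = F(X_k)$ and $\widetilde F_k(Y_k) = F(X_{k+1})$ since $U_k(I_r) = I_n$. By Lemma \ref{lemma:ftilde_smooth}, $\widetilde F_k$ is $L$-smooth on $\gO(r)$ with $L = C_0 + C_1$, so the retraction descent lemma applied to the update $Y_k = \Retr_{I_r}(-\eta\,\grad \tilde f_k(I_r;\xi_k))$ gives $F(X_{k+1}) \le F(X_k) - \eta\langle\grad\widetilde F_k(I_r),\grad \tilde f_k(I_r;\xi_k)\rangle + \tfrac{L\eta^2}{2}\|\grad \tilde f_k(I_r;\xi_k)\|^2$. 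First I would fix $X_k$ and $P_k$ and take expectation over $\xi_k$ only.

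The main obstacle, and the only place Assumption \ref{assump:unbias_variance} enters, is controlling the stochastic submanifold gradient. Writing $E = \nabla f(X_k;\xi_k) - \nabla F(X_k)$, the formula in \eqref{eq:grad} is linear in the ambient gradient, so $\grad \tilde f_k(I_r;\xi_k) - \grad\widetilde F_k(I_r) = P_k(r)\,\tfrac12(EX_k^\top - X_kE^\top)\,P_k(r)^\top$; unbiasedness $\sE_\xi[\grad \tilde f_k(I_r;\xi_k)] = \grad\widetilde F_k(I_r)$ then follows from $\sE_\xi[E]=0$. For the variance I would use that the skew-symmetrization $A\mapsto\tfrac12(A-A^\top)$ is Frobenius-norm-nonincreasing, that $\|EX_k^\top\| \le \|E\|$ because $\|X_k\|_{\mathrm{op}}=1$ on $\St(n,p)$, and that conjugation by $P_k(r)$ (orthonormal rows in the orthogonal case, a row selection in the permutation case) does not increase the Frobenius norm; together these give $\sE_\xi\|\grad \tilde f_k(I_r;\xi_k) - \grad\widetilde F_k(I_r)\|^2 \le \sE_\xi\|E\|^2 \le \sigma^2$.

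With unbiasedness and this variance bound the cross term collapses and $\sE_\xi\|\grad \tilde f_k(I_r;\xi_k)\|^2 \le \|\grad\widetilde F_k(I_r)\|^2 + \sigma^2$; using $\eta\le L^{-1}$ yields $\sE_\xi[F(X_{k+1})] \le F(X_k) - \tfrac{\eta}{2}\|\grad\widetilde F_k(I_r)\|^2 + \tfrac{L\eta^2\sigma^2}{2}$. Next I would take expectation over $P_k$, invoking Proposition \ref{prop:1} to replace $\sE_{P_k}\|\grad\widetilde F_k(I_r)\|^2$ by $\rho\,\|\grad F_k(I_n)\|^2$ with $\rho := r(r-1)/(n(n-1))$, and then Lemma \ref{lemma:grad_fk_grad_f} to lower bound this by $\tfrac{\rho}{2}\|\grad F(X_k)\|^2$. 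Taking total expectation gives the one-step recursion $\sE[F(X_{k+1})] \le \sE[F(X_k)] - \tfrac{\eta\rho}{4}\sE\|\grad F(X_k)\|^2 + \tfrac{L\eta^2\sigma^2}{2}$. Telescoping over $k=0,\dots,K-1$, dividing by $K$, and substituting $\eta=\min\{L^{-1},\sqrt{\Delta_0/L}\,\sigma^{-1}K^{-1/2}\}$, which balances $\Delta_0/(\eta K)$ against $L\eta\sigma^2/2$, produces the claimed global rate after bounding both branches of the minimum by $\tfrac{L\Delta_0}{K}+2\sigma\sqrt{L\Delta_0/K}$ and multiplying by the prefactor $4/\rho = 4n(n-1)/(r(r-1))$.

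For the local statement I would instantiate the same recursion with the constant step-size $\eta = 1/L$ and restrict to the window $X_{k_0},\dots,X_{k_1}\in\mathcal{U}$ on which Definition \ref{def:PL} holds. The PL inequality $\|\grad F(X_k)\|^2 \ge 2\mu\big(F(X_k)-F(X^*)\big)$ converts the recursion into the linear contraction $\sE[F(X_{k+1})-F(X^*)] \le \big(1-\tfrac{\mu\rho}{2L}\big)\sE[F(X_k)-F(X^*)] + \tfrac{\sigma^2}{2L}$. Unrolling from $k_0$ to $k_1$, bounding $\big(1-\tfrac{\mu\rho}{2L}\big)^{k_1-k_0}\le\exp\!\big(-\tfrac{\mu\rho}{2L}(k_1-k_0)\big)$ for the transient term and summing the geometric residual $\tfrac{\sigma^2}{2L}\sum_{j\ge0}\big(1-\tfrac{\mu\rho}{2L}\big)^j = \tfrac{\sigma^2}{\mu\rho}$ yields the stated bound with variance floor $\tfrac{\sigma^2}{\mu}\cdot\tfrac{n(n-1)}{r(r-1)}$. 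The only subtlety to flag is that the contraction factor and floor as written correspond to $\eta=1/L$, so the local analysis should be read with that branch of the step-size, consistent with the PL part of Theorem \ref{thm:main_convergence}.
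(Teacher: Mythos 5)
Your proposal is correct and mirrors the paper's own proof essentially step for step: the same smoothness/descent inequality on the pullback $\widetilde F_k$, the same unbiasedness argument and $\sigma^2$ variance bound for the projected stochastic gradient (the paper bounds the skew-symmetrized noise via a Young-type inequality where you invoke the orthogonal-projection property of skew-symmetrization — an immaterial difference), then Proposition \ref{prop:1} and Lemma \ref{lemma:grad_fk_grad_f} to pass to $\|\grad F(X_k)\|^2$, telescoping with the same step-size balancing for the nonconvex rate, and the identical PL contraction with geometric-sum noise floor. Your closing caveat that the stated contraction factor and floor correspond to the $\eta = L^{-1}$ branch is a fair reading: the paper's own proof also implicitly evaluates $\eta - \eta^2 L/2$ at $\eta = 1/L$ in that step.
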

Theorem \ref{them:stochastic} derives convergence guarantees for stochastic optimization and is comparable to Euclidean analysis under general nonconvex functions \citep{ghadimi2013stochastic} and under PL conditions \citep{garrigos2023handbook}. The introduction of additional factor of $O(n^2r^{-2})$ is consistent with the deterministic setting, analyzed in Section \ref{sect:main_results}.   Lastly, we remark that although we focus on stochastic gradient in the main paper, the analysis can be easily extended to mini-batch gradient descent, as we show in Appendix \ref{app:finite_sum}.

\subsection{Generalization to Quotient Manifolds}
We now extend the developments of RSDM to general quotient manifolds. In fact, Stiefel manifold can be equivalently viewed as the quotient space of orthogonal manifold \citep{edelman1998geometry}. More precisely, we can write $\St(n,p) \cong  \gO(n)/ \gO(n-p)$, i.e., a point in the Stiefel manifold corresponds to the equivalence class 
\begin{equation*}
    [Q] = \left\{ Q \begin{pmatrix}I_p & 0 \\ 0 & U \end{pmatrix} : U  \in \gO(n-p) \right\}.
\end{equation*}
In other words, each point in $\St(n,p)$ is the set of all orthogonal matrices with the same first $p$ columns. Such a  viewpoint allows to generalize the previous developments and analysis to more general quotient manifolds of the form
\begin{equation}
     \mathcal{M}\cong \gO(n)/ \mathcal{K} =  \{ \mathcal{K} \cdot U \ : \  U\in \gO(n)\} 
    \label{eq:quotient}
\end{equation}
where $\mathcal{K}$  is a closed subgroup of $\gO(n)$. An element of $\mathcal{M}$ is the equivalence class $[Q] = \{ K \cdot Q  :  K \in \mathcal{K} \}$. Quotient manifold of the form \eqref{eq:quotient} includes the famous Grassmann manifold \citep{edelman1998geometry}, i.e., $\mathrm{Gr}({n,p})\cong  \gO(n)/ (\gO(p) \times \gO(n-p))$ as well as the flag manifold \citep{zhu2024practical}, i.e., $\mathrm{Flg}(n_1,\cdots,n_d;n)\cong  \gO(n)/ (\gO(n_1) \times \gO(n_2-n_1)\times \cdots \times \gO(n_d-n_{d-1})\times \gO(n-n_d))$. 
Since the action of $\gO(n)$ over $\mathcal{M}$ is transitive, we can follow the same approach for $\St(n,p)$, and introduce a function $F_k: \gO(n)\mapsto \mathbb{R}$ and $\widetilde F_k : \gO(r) \mapsto \mathbb{R}$, where $F_k(U)=F(UX_k)$ and $\widetilde F_k(Y)=F_k(U_k(Y))$, where $U_k(Y)$ is defined as in \eqref{u_k_y_def}. We highlight that $X_k \in \mathcal{M}$ is a representation of the equivalence class. For example, in the Grassmann manifold case, $X_k$ is a column orthonormal matrix whose columns span the subspace.  Therefore, Algorithm \ref{rsgm_stiefel_algo} can be directly applied to the quotient manifolds. Because $F_k$ and $\widetilde F_k$ are only defined on the orthogonal manifold, all our results derived for the Stiefel manifold still hold for general quotient manifolds. 
{
Apart from the orthogonal group, our developments can also be similarly generalized to other compact matrix groups, such as $SO(n)$.
}

\section{Experiments}

This section conducts experiments to verify the efficacy of the proposed method. We benchmark our methods with several baseline: (1) Riemannian gradient descent (\textit{RGD}) on Stiefel manifold \citep{absil2008optimization,boumal2023introduction}; (2) Coordinate descent type of algorithms on Stiefel manifold, namely \textit{RCD} \citep{han2024riemannian} and \textit{TSD} \citep{gutman2023coordinate}; (3) Infeasible and retraction-free methods, including \textit{PCAL} \citep{gao2019parallelizable} and \textit{Landing} \citep{ablin2022fast,ablin2023infeasible}. 

    



For all experiments, we tune the learning rate in the range of $[0.01, 0.05, 0.1, 0.5, 1.0, 1.5, 2.0]$. For the infeasible methods, we tune the regularization parameter in the range of $[0.1, 0.5, 1.0, 1.5, 2.0]$. For the proposed method (RSDM), we consider both permutation sampling and orthogonal sampling for $P_k$, which we denote as \textbf{RSDM-P} and \textbf{RSDM-O} respectively. We set the submanifold dimension accordingly based on the problem dimension and fix for both sampling strategies. By defaults, we use QR-based retraction for RGD and proposed RSDM. 
All experiments are implemented in Pytorch and run on a single RTX4060 GPU. The code is available on \url{https://github.com/andyjm3/RSDM}.

\begin{figure*}[t!]
    \centering
    \begin{minipage}[t]{0.73\textwidth} 
        \centering
        \subfloat[Varying $r$]{\includegraphics[width=0.33\textwidth]{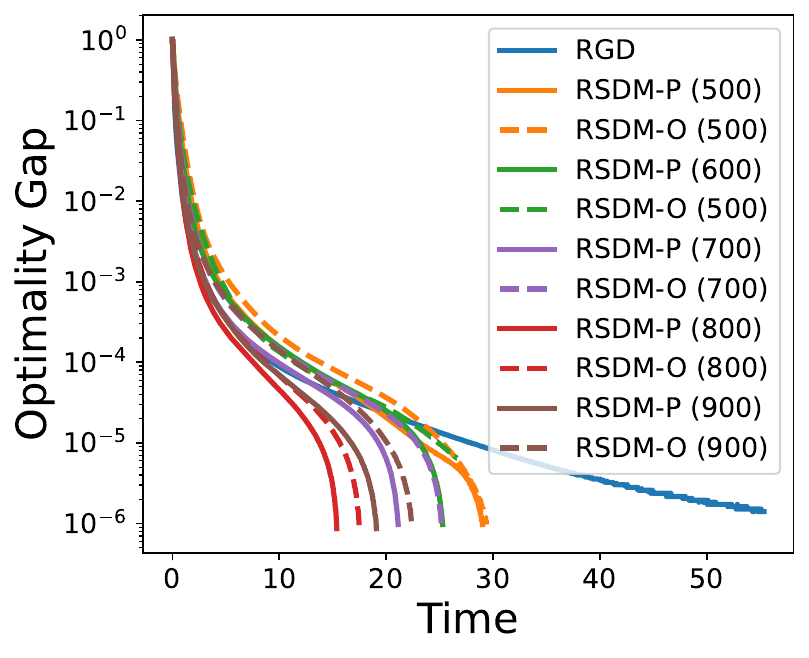}}
        \hfill
        \subfloat[Varying randomness]{\includegraphics[width=0.33\textwidth]{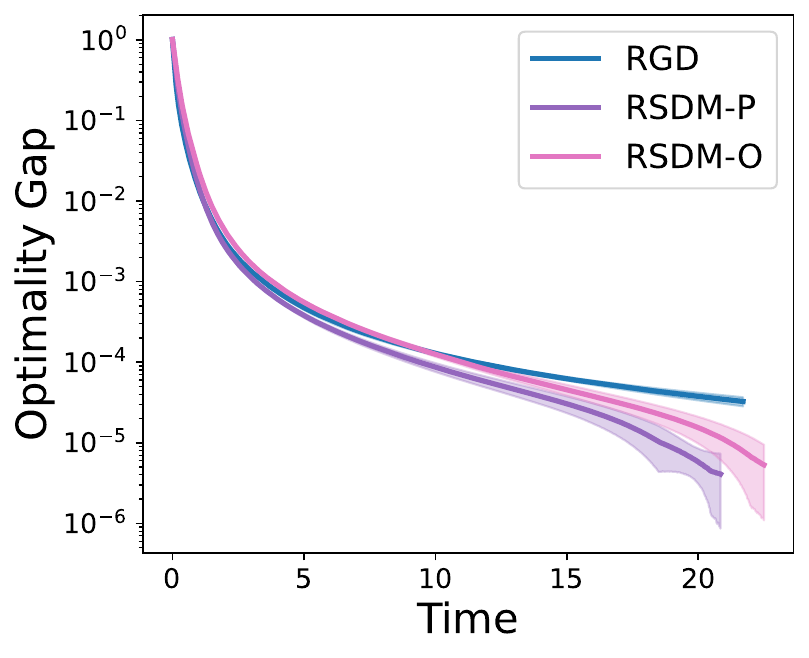}}
        \hfill
        \subfloat[Varying retraction]{\includegraphics[width=0.33\textwidth]{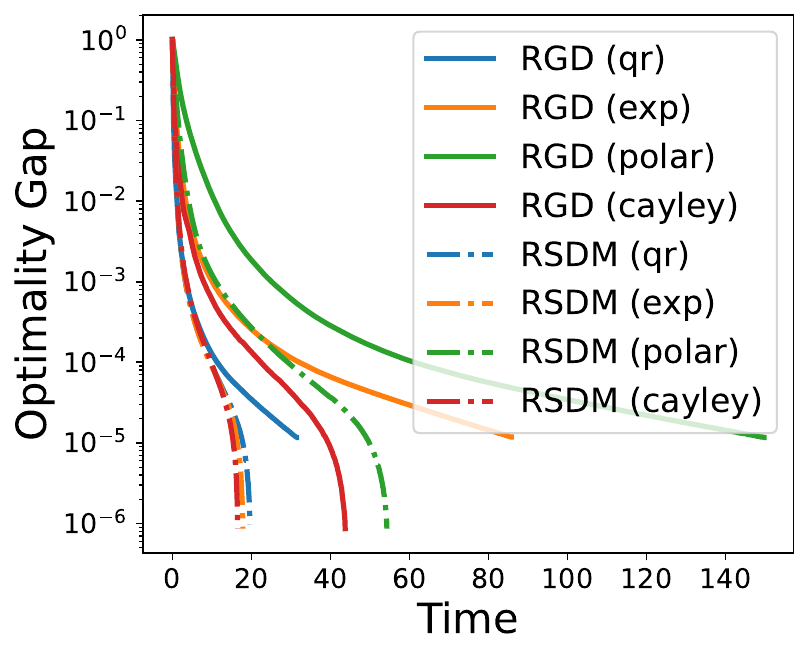}}
        \caption{Experiment results on PCA ($n=2000, p = 1500$) by (a) varying low-dimension $r$  and (b) random seed with $r = 700$. The results suggest the outperformance of proposed RSDM over RGD is robust to changes in $r$ as well as random seed.}
        \label{figure_pca_add}
    \end{minipage}
    \hfill
    \begin{minipage}[t]{0.25\textwidth} 
        \centering
        \subfloat[QUAD]{\includegraphics[width=0.95\textwidth]{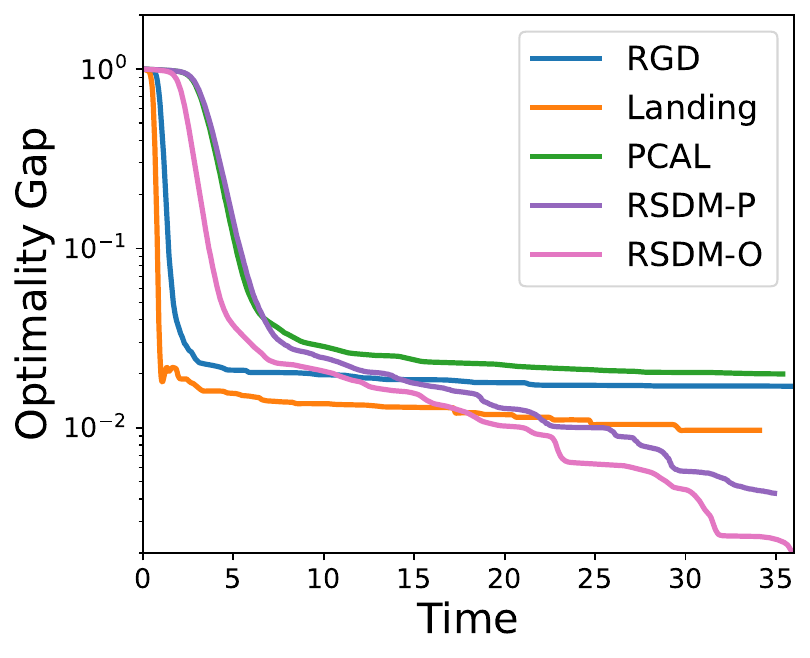}}
        \caption{Experiment results on quadratic assignment problem (QUAD), $n = p = 1000$.}
        \label{figure_quad}
    \end{minipage}
\end{figure*}

\subsection{Procrustes Problem}
\label{procru_sect}

We first consider solving the Procrustes problem as to find an orthogonal matrix that aligns two matrices, i.e., $\min_{X \in \St(n,p)} f(X) =  \| X A - B \|^2$
for some matrix $A \in \sR^{p \times p}, B \in\sR^{n \times p}$. The optimal solution of this problem is $X^* = UV^\top$ where $U \Sigma V^\top = BA^\top$ is the thin-SVD of the matrix $BA^\top$. Hence the optimal solution is computed as $f(X^*) = \| A \|^2_F + \| B\|^2_F - 2 \trace(\Sigma)$. 


\textbf{Setting and Results.}
We explore small-size as well as large-size problems by considering (1) $n=p=200$ and (2) $n=p=2000$. {For the two settings, we set $r=150$ and $r=900$ for RSDM respectively.} We generate $A, B$ where each entry follows a random Gaussian distribution. For this problem, we compute the closed-form solution $X^*$ by taking SVD of $BA^\top$ and measure the optimality gap in terms of ${\rm optgap}(X) = |f(X) - f(X^*)|/|f(X^*)|$. We notice that for feasible methods, like RGD, RCD and proposed RSDM, the problem can be reduced to a linear function as $\max_{X \in \St(n,p)} \langle X, BA^\top \rangle$ while for infeasible methods, the problem remains quadratic. We highlight that because $n=p$, RCD and TSD are equivalent. 

In Figure \ref{figure_1}(a) under the setting $n=p=200$, we see RCD performs notably worse compared to other benchmarks in runtime. This is because, although requiring fewer floating point operations (as shown in \cite{han2024riemannian}), RCD requires more iterations, which is not GPU-friendly. On the other hand, we see the proposed RSDM performs competitively compared to RGD. When increasing the dimensionality to $n=p=2000$, we notice RCD requires overly long runtime to progress and thus we remove from the plots. From Figure \ref{figure_1}(b), we verify the superiority of RSDM over RGD.


\subsection{PCA Problem}
\label{sect:pca_problem}
Next, we consider a quadratic problem, originating from principal component analysis (PCA), as to find the largest eigen-directions of a covariance matrix. This can be  formulated as $\min_{X \in \St(n,p)} F(X) = -\trace( X^\top A X)$,
where $A \in \sR^{n \times n}$. This problem also has analytic solution given by the top-$p$ eigenvectors of $A$.

\textbf{Setting and Results.}
We create $A$ to be a positive definite matrix with a condition number of $1000$ and exponentially decaying eigenvalues. Due to the existence of an analytic solution, we measure the optimality gap in the same way as in Section \ref{procru_sect}. 
In Figure \ref{figure_1}(c) and (d), we consider the setting of $n = 2000, p = 1500$ and $n=3000, p = 2500$, which represent large-scale scenarios. For the two settings, we set $r = 700, 1000$ respectively. 
We see RSDM achieves the fastest convergence among all the baselines. Especially around optimality, we see RSDM switches from the sublinear convergence to linear in contrast to other baselines that maintains the sublinear convergence throughout. This behavior may be attributed to the random projection, which potentially provides a more favorable optimization landscape close to optimum \citep{doi:10.1137/21M1433678}. 
A formal theoretical verification of this claim is left for future work.

To further validate the robustness of RSDM, we conduct additional experiments by varying the low dimension $r$, altering the random seed and utilizing different retractions. The results, presented in Figure \ref{figure_pca_add}, demonstrate that the performance of RSDM is largely insensitive to the choice of $r$ (within a reasonable range) and the randomness throughout the iterations. RSDM also consistently outperforms RGD across all available retractions.

\begin{figure*}[t]
    \centering
    \subfloat[O-FFN on MNIST]{\includegraphics[width=0.249\textwidth]{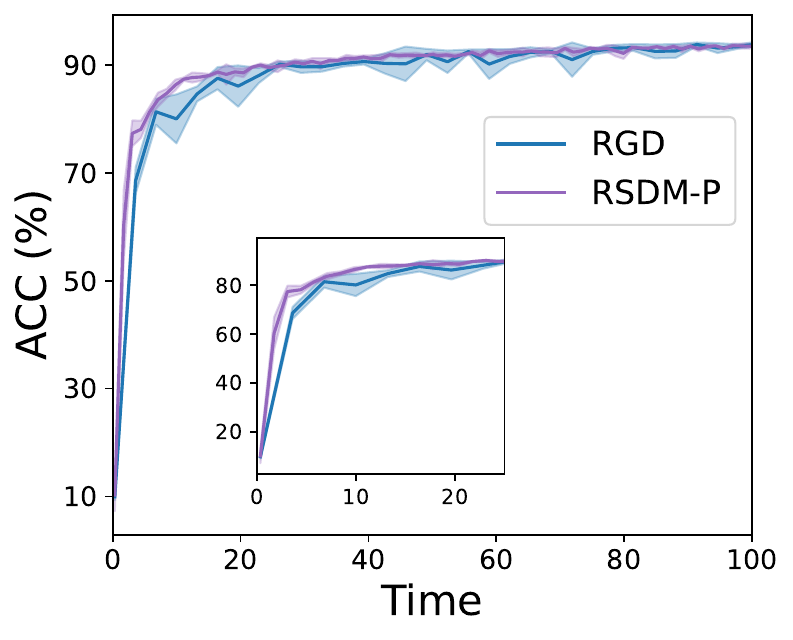}}
    \subfloat[O-FFN on CIFAR10]{\includegraphics[width=0.249\textwidth]{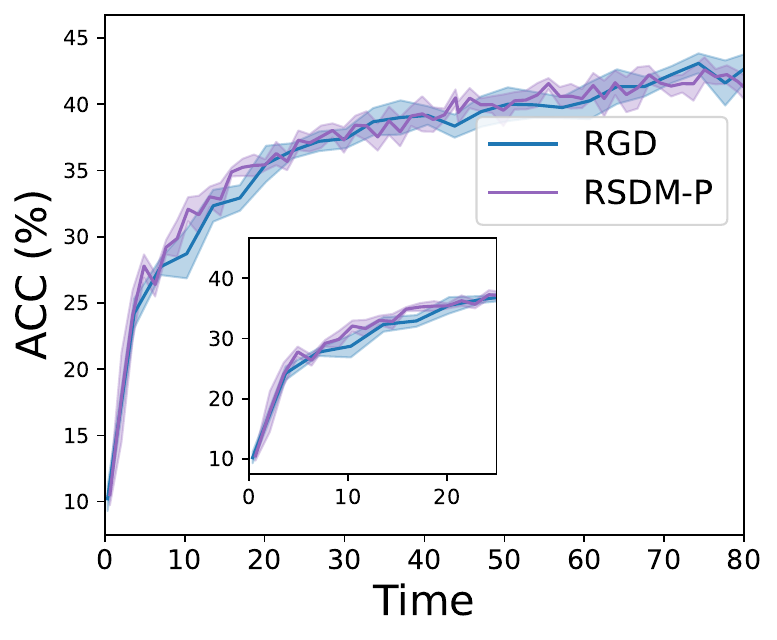}} 
    \subfloat[O-ViT on CIFAR10]{\includegraphics[width=0.249\textwidth]{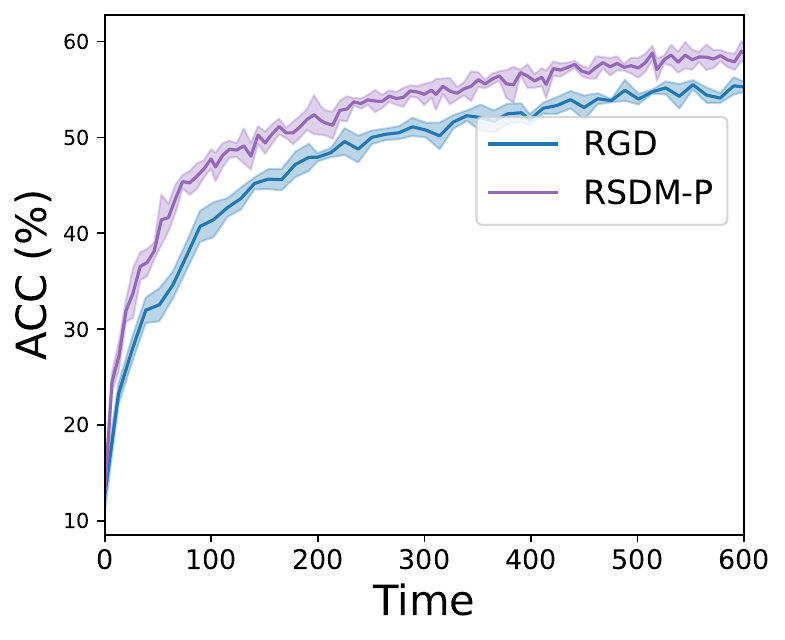}} 
    \vspace*{-5pt}
    \caption{Test accuracy for training orthogonal neural network (O-FNN) and orthogonal vision transformer (O-ViT) on MNIST dataset and CIFAR10 dataset in five runs.}
    \vspace{-12pt}
    \label{figure_dnn}
\end{figure*}


\subsection{Quadratic Assignment Problem}
The quadratic assignment problem \citep{burkard1997qaplib,wen2013feasible} aims to minimize a quadratic function over permutation matrix. In \citep{wen2013feasible}, the problem is re-formulated as a problem over the Stiefel manifold: $\min_{X \in \St(n,n)} F(X) = \trace(A^\top  (X \odot X)  B  (X \odot X)^\top)$.

\textbf{Setting and Results.}
We consider the setting of $n = 1000$ and generate $A, B$ as random normal matrices. Since no closed-form solution exists for this problem, we first run RGD for sufficient number of iterations, using the resulting variable as the optimal solution. As shown in { Figure \ref{figure_quad}(a)},  RSDM converges the fastest among the baselines, especially near the optimal solution. Moreover, in this example, orthogonal sampling outperforms the permutation sampling.

\subsection{Orthogonal Neural Networks}

We consider optimizing a neural network with orthogonal constraints. We first consider a feedforward neural network (FFN) with ReLU activation for image classification task, i.e.,
$\min_{\{ W_{\ell} \in \St( d_\ell, d_{\ell + 1} ) \} }  L( {\rm nn}(X) , y),$
where $ {\rm nn}(X) = \sigma(\cdots \sigma( X W_\ell + b_\ell) \cdots) W_L + b_L$ denotes a L-layer feedforward neural network with bias terms and $L(\cdot, \cdot)$ denotes the cross-entropy loss. Apart from the feedforward neural network, we also consider optimizing an (orthogonal) vision transformer (ViT) \citep{fei2022vit}. We follow \citep{fei2022vit} to  impose orthogonality constraint on the query, key and value projection matrices.

\textbf{Setting and Results.}
We optimize neural networks (orthogonal FFN and orthogonal ViT) to classify MNIST \citep{lecun1998gradient} and CIFAR10 \citep{krizhevsky2009learning} images. For preprocessing,  the MNIST images are resized into $32 \times 32$ for MNIST and CIFAR10 images to $20 \times 20 \times 3$. The images are then normalized into $[-1,1]$ and vectorized as input for the neural network with a size of $1024$ for MNIST and $1200$ for CIFAR10. We train a 6-layer FFN, where we constrain the weight of the first 5 layers to be column orthonormal with a hidden size of $1024$. The output layer weight, with a size of $1024 \times 10$, remains unconstrained. We also train a 6-layer, 4-head ViT with embedding dimension 1024 and 64 patches, and constrain the query, key and value matrices of all attention layers to be orthogonal. For optimization, we employ RGD and RSDM with a batch size of $16$. We set learning rate for unconstrained parameters to be $0.1$ and only tune the learning rate for the orthogonal parameters. We plot the test accuracy in Figure \ref{figure_dnn} where we compare RGD with RSDM-P with five independent runs. 

When training an orthogonal FFN, we observe that RSDM achieves faster convergence during the early iterations in terms of runtime, indicating its greater efficiency in quickly reaching high accuracy. On the other hand, when training an orthogonal ViT, RSDM consistently outperforms RGD in test accuracy throughout the training process, with a non-negligible performance gap. This suggests that  RSDM may offer greater advantages for training larger and more complex architectures such as ViTs.

\section{Conclusion}
In this paper, we have introduced a novel randomized submanifold approach for optimization problems with orthogonality constraints in order to reduce the high complexity associated with the retraction. We have derived convergence guarantees of the proposed method on a variety of function classes and empirically demonstrated its superiority in a number of problem instances. 
We also discuss two sampling strategies based on orthogonal and permutation matrices, and discuss the trade-off in terms of computational efficiency versus convergence guarantees. 

We believe our developments represent a significant advancement in scalable Riemannian optimization by offering a simple, yet effective solution for large-scale problems with orthogonality constraints. In the paper, we only discuss the application of randomized submanifold strategy to Riemannian gradient descent. Nonetheless, we believe such a strategy can be combined with more advanced optimization techniques, such as line-search \citep{boumal2019global}, momentum \citep{li2020efficient,kong2023momentum}, preconditioning \citep{kasai2019riemannian} and higher-order methods \citep{huang2015broyden,absil2007trust}, to further enhance convergence efficiency and robustness with orthogonality constraints and beyond.

\textbf{Limitation.}
We remark that our random submanifold method has a matching complexity as RGD when $p = \Omega(n)$. Extending our development to the case where $p \ll n$ remains an important future direction.

\section*{Impact Statement}
This paper presents work whose goal is to advance the field
of Machine Learning. There are many potential societal
consequences of our work, none which we feel must be
specifically highlighted here.

\bibliography{references}
\bibliographystyle{icml2025}


\newpage
\appendix
\onecolumn
\section{Other related works}

Apart from orthogonality constraints, \citet{han2024riemannian} derive efficient coordinate updates for other matrix manifolds, such as Grassmann and positive definite manifolds. \citet{varyoptimization} extend the idea of infeasible update for generalized Stiefel manifold and \citet{darmwal2023low} propose efficient subspace descent algorithms for positive definite manifold with affine-invariance metric. 
Several other studies \citep{huang2021riemannian,peng2023block} investigate (block) coordinate descent on a product of manifolds, where each update targets an individual component manifold. This however is less relevant to our setting, where we exploit submanifolds on a single manifold.

\section{Preliminaries on Stiefel manifold and Riemannian optimization}

In this section, we provide more detailed introduction to the Stiefel manifold and Riemannian optimization. 
Stiefel manifold $\St(n,p) = \{ X \in \sR^{n \times p} : X^\top X = I_p \}$ is the set of column orthonormal matrices. When $n = p$, $\St(n,p) \equiv \gO(n)$, which is called orthogonal manifold, also forming a group.
The tangent space of Stiefel manifold is $T_X \St(n,p) = \{ U \in \sR^{n \times p} : X^\top U + U^\top X = 0 \}$.  One can choose the Euclidean metric (restricted to the tangent space) as a Riemannian metric for $\St(n,p)$, i.e., for any $X \in \St(n,p)$, and $U, V \in T_X\St(n,p)$, Riemannian metric $\langle U, V \rangle_X = \langle U, V \rangle$ where we use $\langle \cdot, \cdot \rangle$ to represent the Euclidean inner product. Other metrics such as canonical metric \citep{edelman1998geometry} can also be considered. Orthogonal projection of any $W \in \sR^{n \times p}$ to $T_X\St(n,p)$ with respect to the Euclidean metric is derived as ${\rm P}_X( W ) = W - X \{ X^\top W \}_{\rm S}$, where we denote $\{ A \}_{\rm S} \coloneqq  (A + A^\top)/{2}$. 
For a smooth function $F: \St(n,p) \rightarrow \sR$, Riemannian gradient of $F$ at $X \in \St(n,p)$, denoted as $\grad F(X)$, is a tangent vector that satisfies for any $U \in T_X\St(n,p)$,  $\langle \grad F(X), U \rangle_X = \langle  \nabla F(X), U \rangle$, where $\nabla F(X)$ denotes the classic Euclidean gradient. The Riemannian gradient on Stiefel manifold can be computed as $\grad F(X) = {\rm P}_X(\nabla F(X)) = \nabla F(X) -  X \{ X^\top \nabla F(X) \}_{\rm S}$.

Riemannian optimization works by iteratively updating the variable on the manifold following some descent direction. Throughout the process, a retraction is required to ensure that the iterates stay on the manifold. Specifically, a retraction, denoted as $\Retr_X: T_X\St(n,p) \rightarrow  \St(n,p)$ is a map from tangent space to the manifold that satisfies  $\Retr_X(0) = X$ and $\D \Retr_X(0) [V] = V$ for any $V \in T_X\St(n,p)$, where $\D$ is the differential operator. There exist various retractions on Stiefel manifold, including (1) QR-based retraction: $\Retr_X(U) = {\rm qf}(X + U)$, where ${\rm qf}$ extracts the Q-factor from the QR decomposition; (2) Polar retraction: $\Retr_X(U) = (X + U)(I_p + U^\top U)^{-1/2}$; (3) Cayley retraction: $\Retr_X(U) = (I_n - W)^{-1} (I_n + W) X$ where $U = WX$ for some skew-symmetric $W \in \sR^{n \times n}$;  (4) Exponential retraction: $\Retr_X(U) = \begin{bmatrix}
    X & U
\end{bmatrix} \expm(\begin{bmatrix}
    X^\top U & - U^\top U \\
    I_p & X^\top U
\end{bmatrix}) \begin{bmatrix}
    \expm(- X^\top U) \\ 0
\end{bmatrix}$, where $\expm(\cdot)$ denotes matrix exponential. We highlight that all retractions require linear algebra operations other than matrix multiplications that costs at least $O(np^2)$.

\section{Additional experiment results}

In the main text, we present the convergence results only in terms of runtime. Here we also plot the convergence with respect to iteration number in Figure \ref{figure_supp}. We see for Procrustes problem, one of the simplest optimization problems on Stiefel manifold, both RGD  and Landing algorithm yields fastest convergence in iteration number. We also notice in small-sized problem, RCD converges quickly. Nonetheless, each iteration of RCD requires to loop through all the $n^2$ indices, resulting in  poor parallelizability. This is reflected in the runtime comparisons presented in the main text. For other problem instances, including PCA and quadratic assignment, RSDM attains the fastest convergence not only in runtime (as shown in the main text) but also in terms of iteration count (Figure \ref{figure_supp}). 

Finally, we plot the convergence in iteration for training orthogonal neural networks on MNIST and CIFAR10. We see that RSDM is not able to beat the RGD in terms of convergence in iteration, due to the difficulty of the optimization problems. 

\begin{figure}[h]
    \centering
    \subfloat[Procrustes ($200,200$)]{\includegraphics[width=0.248\textwidth]{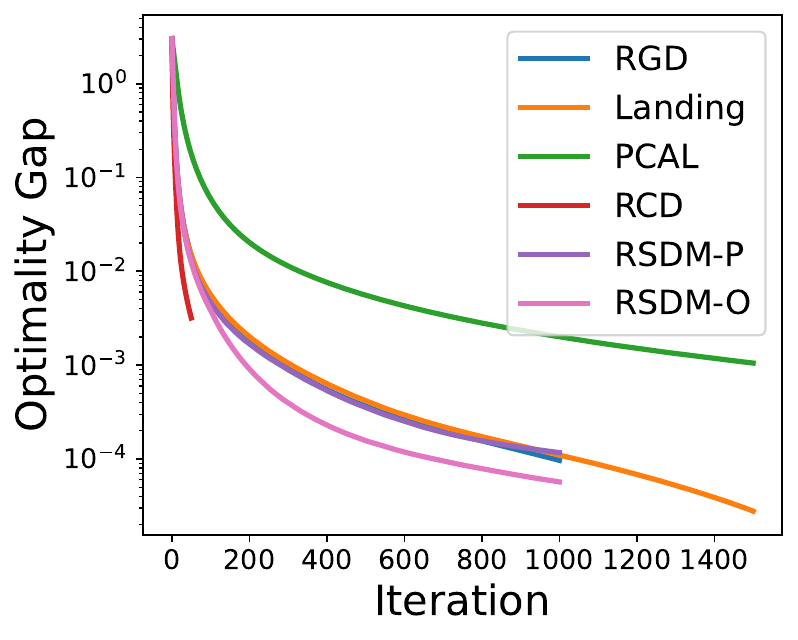}  }
    \subfloat[Procrustes ($2000,2000$)]{\includegraphics[width=0.248\textwidth]{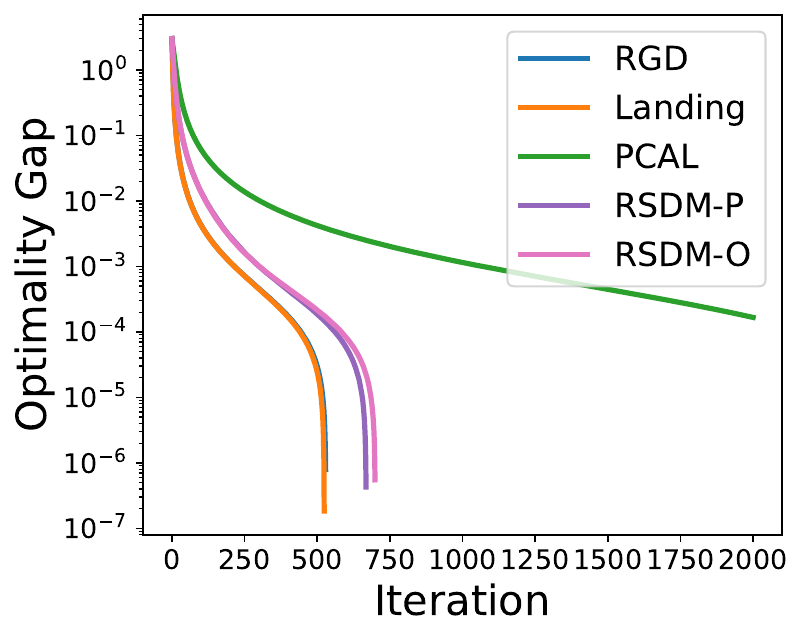}} 
    \subfloat[PCA ($2000, 1500$)]{\includegraphics[width=0.248\textwidth]{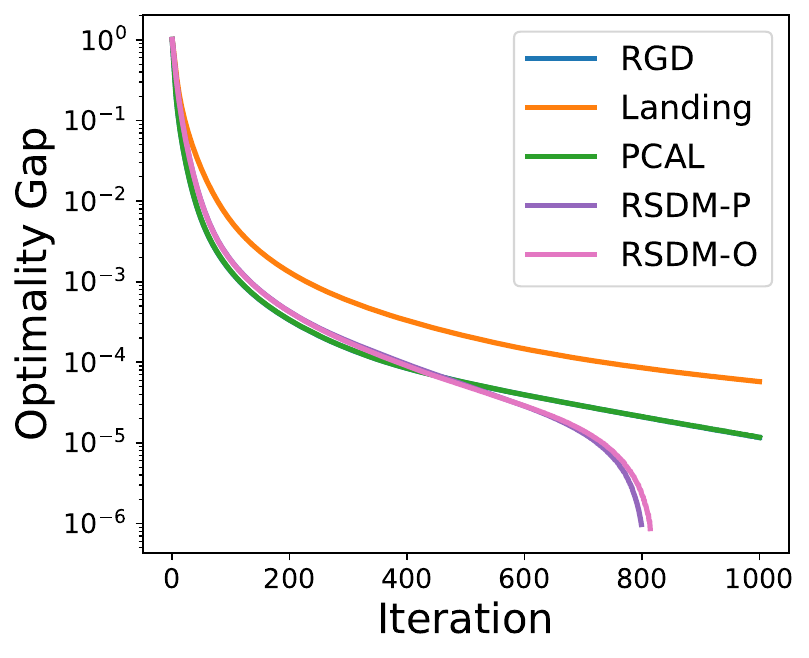}}
    \subfloat[PCA ($3000, 2500$)]{\includegraphics[width=0.248\textwidth]{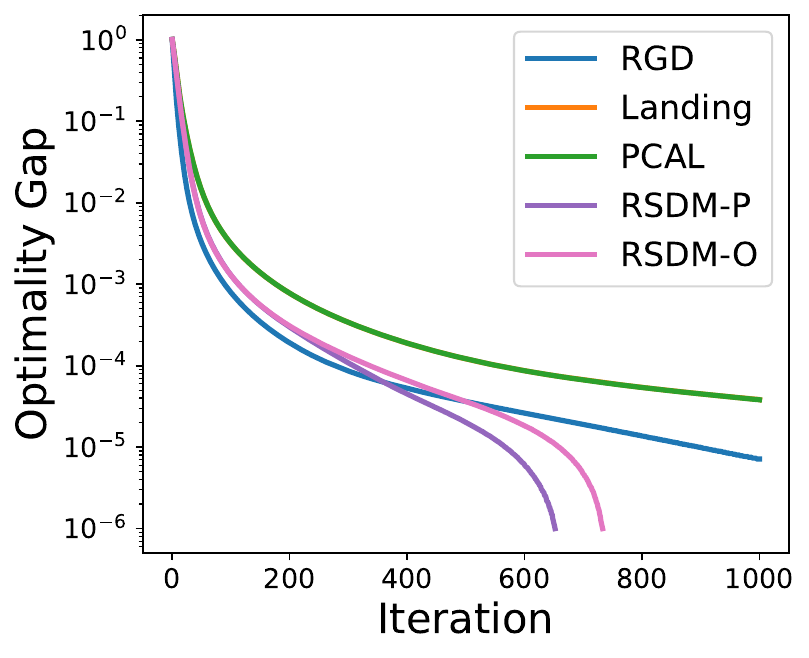}} \\
    \subfloat[QAssign {($1000, 1000$)}]{\includegraphics[width=0.24\textwidth]{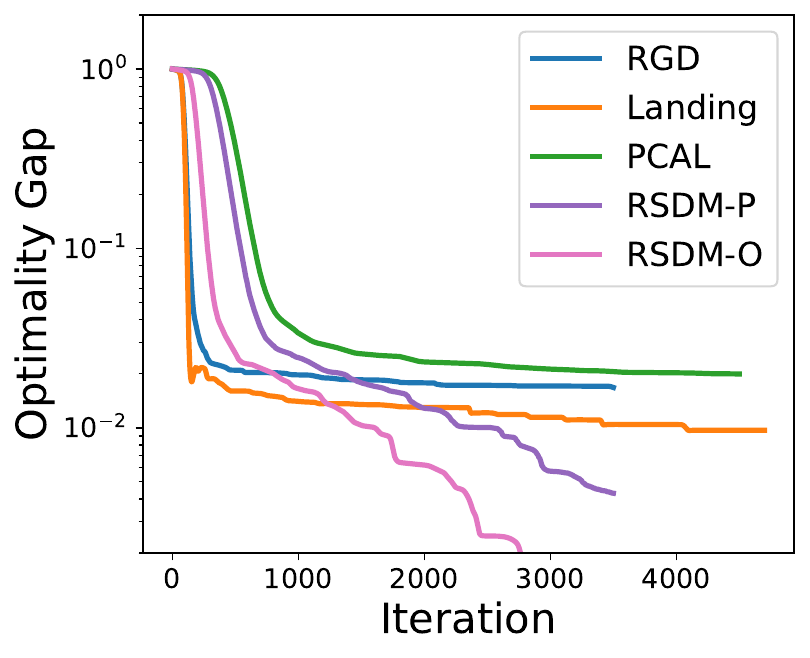}} 
    \subfloat[MNIST]{\includegraphics[width=0.24\textwidth]{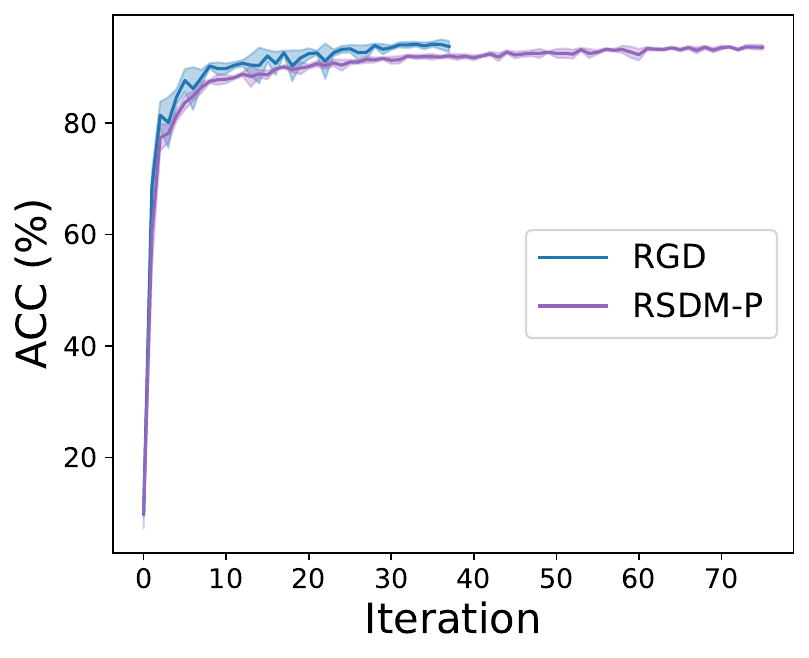}} 
    \subfloat[CIFAR10]{\includegraphics[width=0.24\textwidth]{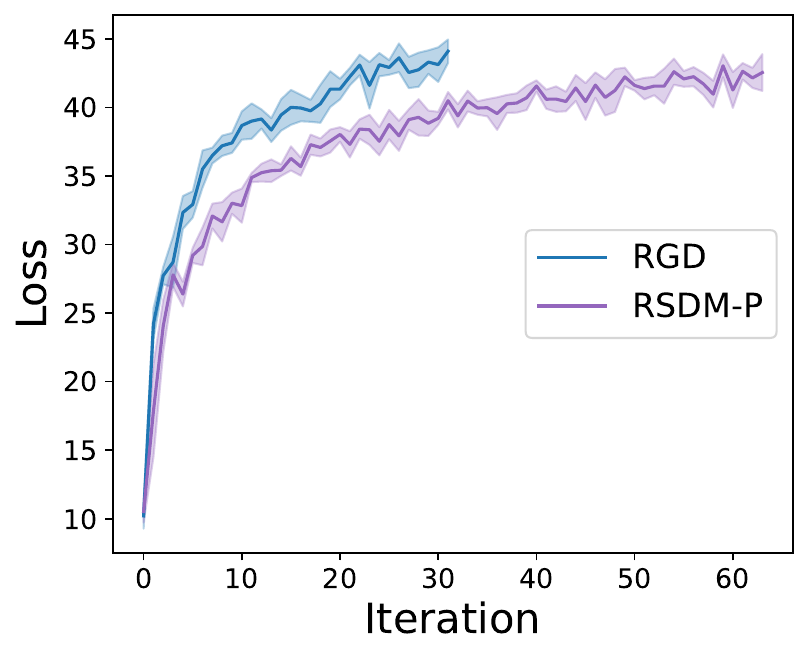}} 
    \caption{Convergence in terms of iteration on Procrustes problem and PCA problem and quadratic assignment problem under various settings. We observe that except for the Procrustes problem and training of orthogonal neural network, RSDM also converges the fastest in terms of iteration number.}
    \label{figure_supp}
\end{figure}

\begin{figure}[h]
    \centering 
    \subfloat[MNIST]{\includegraphics[width=0.248\textwidth]{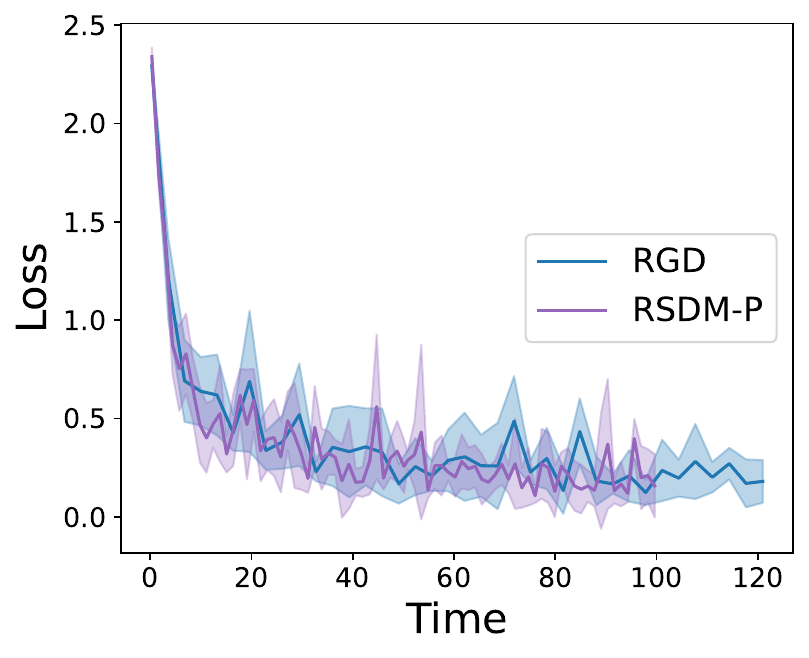}}
    \subfloat[CIFAR10]{\includegraphics[width=0.248\textwidth]{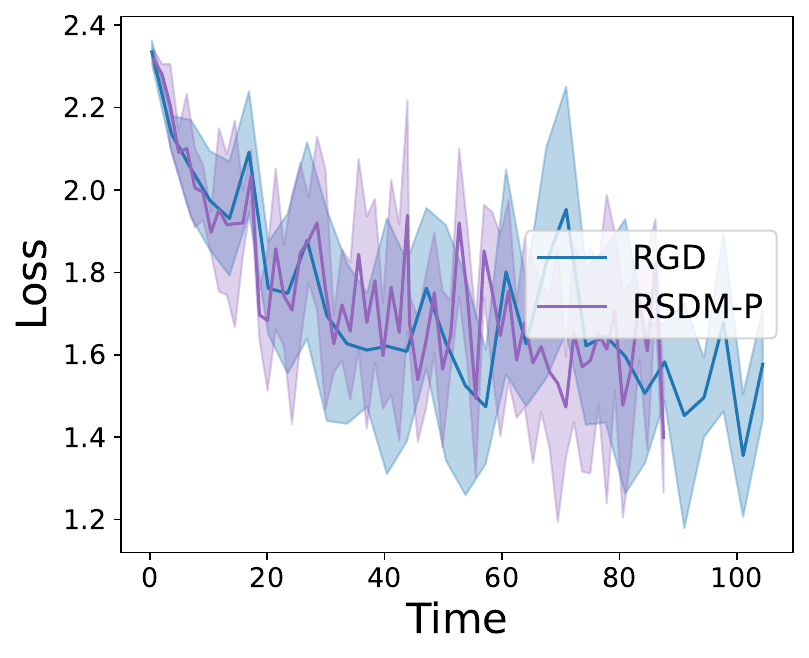}} \\
    \caption{Convergence in loss plot for on image classification.}
    \label{figure_supp2}
\end{figure}

\section{Proofs}

\subsection{Proof of Lemma \ref{lemma:ftilde_smooth}}
\label{sect_proof:lem1}

We first recall the Hessian of a function $G: \St(n,p) \rightarrow \sR$ along a any tangent vector $V $ is
\begin{align*}
    \hess G(X) [V] = {\rm P}_X(\nabla^2 G(X)[V] - V \{ X^\top \nabla G(X)\}_{\rm S})
\end{align*}
where $\{ A\}_{\rm S} = (A + A^\top)/2$ and ${\rm P}_X(\xi) = \xi - X \{ X^\top \xi \}_{\rm S}$.

\begin{proof}[Proof of Lemma \ref{lemma:ftilde_smooth}]
Recall the (Euclidean) gradient and Hessian of $\widetilde{F}_X(Y)$ is derived as 
\begin{align*}
    \nabla \widetilde F_X(Y) &= P(r) \nabla F\big (U(Y) X \big) X^\top P(r)^\top \\
    \nabla^2 \widetilde F_X(Y)[V] &= P(r) \nabla^2 F \big( U(Y) X \big) [U(V) X] X^\top P(r)^\top
\end{align*}
for any $V \in T_Y \gO(r)$. This leads to the following Riemannian Hessian 
\begin{align*}
    \hess \widetilde F_X(Y) [V] = {\rm P}_{Y}\left( \nabla^2 \widetilde F_X(Y) [V] - V \{ Y^\top \nabla \widetilde F_X(Y) \}_{\rm S}  \right).
\end{align*}
We wish to bound $\| \hess \widetilde F_X(Y) [V] \|_Y$ in terms of $\| V \|_Y$. 
First we notice that $\| \hess \widetilde F_X(Y) [V] \|_Y \leq \| \nabla^2 \widetilde F_X(Y) [V] - V \{ Y^\top \nabla \widetilde F_X(Y) \}_{\rm S} \|$ because for any $\xi$, 
\begin{align*}
    \| {\rm P}_Y(\xi) \|^2_Y = \frac{1}{4} \|   \xi - Y \xi^\top Y \|^2 &= \frac{1}{4} \big( 2 \| \xi \|^2 - 2 \langle \xi,  Y \xi^\top Y\rangle  \big)\\
    &= \frac{1}{4} \big(2 \| \xi \|^2 - 2 \vec(\xi)^\top (Y^\top \otimes Y) \vec(\xi^\top) \big) \\
    &\leq \frac{1}{4} \big(2 \| \xi \|^2 + 2 \| \xi \|^2 \big) \\
    &= \| \xi \|^2
\end{align*}
where we use the fact that $\| (Y^\top \otimes Y) v \| = \| v\|$ for any $v$ and $Y \in \gO(r)$. 

Then we bound 
\begin{align*}
    \| \nabla^2 \widetilde F_X(Y) [V] - V \{ Y^\top \nabla \widetilde F_X(Y) \}_{\rm S}  \| &\leq \| \nabla^2 \widetilde F_X(Y) [V] \| + \| V \| \| \{ Y^\top \nabla \widetilde F(Y) \}_{\rm S} \| \\
    &\leq \| \nabla^2 F \big( U(Y) X \big) [U(V) X] \| + \| V \| \| \nabla F(U(Y) X) \| \\
    &\leq C_1 \| U(V) \| + C_0\| V\| \\
    &= (C_0+C_1) \| V\|
\end{align*}
where we use triangle inequality in the first inequality. The second inequality uses $\|P(r)\|\leq 1$, $\| X \|, \| Y \| \leq 1$. The third inequality is by assumption on $\nabla^2 F(X), \nabla F(X)$. The last equality is by definition of $U(V)$.
\end{proof}

\subsection{Proof of Lemma \ref{lemma:grad_fk_grad_f}}

\begin{proof}[Proof of Lemma \ref{lemma:grad_fk_grad_f}]
From the definition that $F_X(U) = F(UX)$, we let
\begin{align*}
    A:=\| \grad F_X(I_n)\|^2 = \frac{1}{4} \| \nabla F(X) X^\top - X \nabla F(X)^\top   \|^2  \\
    B:=\| \grad F(X) \|^2 = \| \nabla F(X) - X \{ X^\top \nabla F(X) \}_{\rm S} \|^2
\end{align*}
We first notice that
\begin{align*}
    A&= \frac{1}{4}\left( \| \nabla F(X) X^\top\|^2+\|X \nabla F(X)^\top \|^2-2 \trace(X \nabla F(X)^\top X \nabla F(X)^\top) \right) \\
    &=\frac{1}{2}\left( \| \nabla F(X) \|^2-\trace(X \nabla F(X)^\top X \nabla F(X)^\top)\right).
\end{align*}
Similarly,
\begin{align*}
    B&= \|\nabla F(X)\|^2 + \|X \{ X^\top \nabla F(X) \}_{\rm S}\|^2 -2 \trace(\nabla F(X)^\top X \{ X^\top \nabla F(X)\}_{\rm S} ) \\
    &= \|\nabla F(X)\|^2 + \|X \{ X^\top \nabla F(X)\}_{\rm S}\|^2 -\trace(\nabla F(X)^\top X (X^\top \nabla F(X) + \nabla F(X)^\top X ) \\
    &=\|\nabla F(X)\|^2 -\trace(X \nabla F(X)^\top X \nabla F(X)^\top ) +C,
\end{align*}
where we let $C \coloneqq \|X \{ X^\top \nabla F(X)\}_{\rm S}\|^2 - \trace(\nabla F(X)^\top X X^\top \nabla F(X))$.
Then we have
\begin{align*}
    C &= \frac{1}{4}\|X X^\top \nabla F(X)\|^2+\frac{1}{4}\|X \nabla F(X)^\top X\|^2+\frac{1}{2}\trace(\nabla F(X)^\top X \nabla F(X)^\top X) \\
    &-\trace(\nabla F(X)^\top X X^\top \nabla F(X)) \\
    &= \frac{1}{4}  \trace(\nabla F(X)^\top XX^\top \nabla F(X)) + \frac{1}{4} \trace(X^\top \nabla F(X)  \nabla F(X)^\top X) + \frac{1}{2}\trace(\nabla F(X)^\top X \nabla F(X)^\top X) \\
    &-\trace(\nabla F(X)^\top X X^\top \nabla F(X)) \\
    &= -\frac{1}{2}\trace(\nabla F(X) \nabla F(X)^\top XX^\top) +\frac{1}{2}\trace(\nabla F(X)^\top X\nabla F(X)^\top X)
\end{align*}

Therefore,
$$B=\|\nabla F(X)\|^2-\frac{1}{2}\trace(\nabla F(X)^\top X X^\top \nabla F(X)) -\frac{1}{2}\trace(\nabla F(X)^\top X\nabla F(X)^\top X) .$$
and,
$$A-B=\frac{1}{2}(\trace(\nabla F(X)^\top X X^\top \nabla F(X))-\|\nabla F(X)\|^2).$$
We will now prove that
$$ A-B\ge -A.$$
To this end, we consider the quantity $\| F(X)^\top X - X^\top F(X)\|^2\ge 0$. By expanding we obtain
\begin{align*}
\| F(X)^\top X - X^\top F(X)\|^2 &= \|F(X)^\top X \|^2 + \|X^\top F(X)\|^2 -2\trace(F(X)^\top X F(X)^\top X ) \\
&= 2(\|F(X)^\top X \|^2  - \trace(XF(X)^\top X F(X)^\top ) \\
&= 2( \trace(\nabla F(X)^\top X X^\top \nabla F(X) -\trace(XF(X)^\top X F(X)^\top )  ) \ge 0.
\end{align*}
This shows 
$$\trace(X \nabla F(X)^\top X \nabla F(X)^\top) \le \trace(\nabla F(X)^\top X X^\top \nabla F(X)),$$
which concludes $A - B \geq -A$ and thus $A \geq B/2$. 
\end{proof}

\subsection{Proof of Proposition \ref{prop:1}}
\label{sect_proof:prop1}

\begin{proof}[Proof of Proposition \ref{prop:1}]
We separately prove the results for the strategies of permutation sampling and orthogonal sampling.

\paragraph{Permutation sampling.}
Recall that the gradient $\grad F_X(I_n)$ on $\gO(n)$ is given by
    $$ \grad F_X(I_n)=\frac{1}{2}(\nabla F(X)X^\top-X\nabla F(X)^\top).$$
Hence by \eqref{eq:grad} and using the fact that $P$ is a permutation matrix, we have
    $$ \grad \widetilde F_X(I_r) = P(r) \grad F_X(I_n) P(r)^\top,$$
    where $P(r) \in \mathbb{R}^{r \times n}$ consist of the first $r$ rows of $P$.
     Let us define by $S_n^r$ the set of all truncated permutation matrix $P(r)\in \mathbb{R}^{r \times n}$. To each element $P$ of $S_n^r$, we can associate 
    a unique "truncated" permutation $\pi$ defined by 
    $$\forall i\le r:\ \pi(i) \mbox{ is such that } P^\top e_i=e_{\pi(i)}.$$
    Notice that $\pi$ is defined only for the first $r$ integers as the matrix $P \in S_n^r$ has only $r$ rows. 
    Using the fact that $\grad F_X(I_n)$ is Skew-symmetric, we have
    $$ \|P(r)\grad F_X(I_n) P(r)^\top\|^2=2\sum_{1\le i<j\le r} (\grad F_X(I_n))^2_{(\pi(i),\pi(j))}=\sum_{1\le i,j\le r} (\grad F_X(I_n))^2_{(\pi(i),\pi(j))}.$$
    We can check the number of element in $S_n^r$ is given by
    $$|S_n^r|=\frac{n!}{(n-r)!},$$
    as $n!$ is the number of permutations on $\{1,\cdots,n\}$ and $(n-r)!$ is the number of way to complete the truncated permutation into a permutation on $\{1,\cdots,n\}$. Therefore, we deduce that
    $$\mathbb{E}\|\grad \widetilde F_X(I_r)\|^2=2\frac{(n-r)!}{n!}\sum\limits_{\pi \in S_n^r} \sum_{i<j }(\grad F_X(I_n))^2_{(\pi(i),\pi(j))}.$$
    
    Let us now fix $1\le k,l\le n$. We will now count how many times does the term $(\grad F_X(I_n))^2_{(k,l)}$ appears in the sum
    $$\sum\limits_{\pi \in S_n^r} \sum_{i<j }(\grad F_X(I_n))^2_{(\pi(i),\pi(j))}.$$
    More formally, let us denote by $n_{k,l}\in \mathbb{N}$, the number of time $(\grad F_X(I_n))^2_{(k,l)}$ appears in the above sum. Then we have that
    $$\mathbb{E}\|\grad \widetilde F_X(I_r)\|^2=2\frac{(n-r)!}{n!}\sum\limits_{1\le k,l\le n }n_{k,l}(\grad F_X(I_n))^2_{(k,l)}.$$
    Let us now compute $n_{k,l}$. For that it is equivalent to count how the number of elements $\pi \in  S_n^r$, such that $(k,l) \in \{(\pi(i),\pi(j))\ | \ i<j\}$. This equivalent to choosing an ordered pair $(i,j)$ inside $\{1,\cdots,r\}^2$ and then count the number of elements $\pi$ in $S_n^r$ such that
    $(k,l)=(\pi(i),\pi(j))$. Hence, we deduce that
    $$n_{k,l}=\frac{r(r-1)}{2}\frac{(n-2)!}{(n-r)!},$$
    as there is exactly $(n-2)!$ permutation $\pi$ such that for fixed $(i,j)$, $(k,l)=(\pi(i),\pi(j))$.
    We deduce therefore that
    \begin{align*}
        \mathbb{E}\|\grad \widetilde F_X(I_r)\|^2 &= 2\frac{(n-r)!}{n!}\frac{r(r-1)}{2}\frac{(n-2)!}{(n-r)!} \sum\limits_{1\le k,l\le n} (\grad F_X(I_n))^2_{(k,l)} \\
        &= \frac{r(r-1)}{n(n-1)} \|\grad F_X(I_n)\|^2,
    \end{align*}
    which completes the proof.

\paragraph{Orthogonal sampling.}
Recall
$$ \grad \widetilde F_X(I_r) = P(r) \grad F(I_n) P(r)^\top,$$
where $P(r) \in \mathbb{R}^{r \times n}$ consist of the first $r$ rows of $P$. Let us denote by $M:=\grad F_X(I_n)$. We have
\begin{align*}
    \|\grad \widetilde F_X(I_r)\|^2&=\sum_{i,j=1}^r\left(\sum_{k,l=1}^nP_{ik}M_{kl}P_{jl}\right)^2 \\
    &=\sum_{i,j=1}^r\sum_{k_1,l_1,k_2,l_2=1}^nP_{ik_1}M_{k_1l_1}P_{jl_1}P_{ik_2}M_{k_2l_2}P_{jl_2}\\
    &=\sum_{i\neq j=1}^r\sum_{k_1,l_1,k_2,l_2=1}^nP_{ik_1}P_{jl_1}P_{ik_2}P_{jl_2}M_{k_1l_1}M_{k_2l_2}
\end{align*}
where the last inequality holds as $PMP^\top$ is skew symmetric as $M$ is, hence $(PMP^\top)_{ii}=0$ for all $i$.
Hence, to prove the theorem, we must compute, for all set of indices, $\mathbb{E}[P_{ik_1}P_{jl_1}P_{ik_2}P_{jl_2}]$ for $i\neq j$. 

First, let us consider the case where all indices $k_1,l_1,k_2,l_2$ are different. We will prove that in such case $\mathbb{E}[P_{ik_1}P_{jl_1}P_{ik_2}P_{jl_2}]=0$. Indeed, notice that since all the four indices are different, $P_{ik_1},P_{jl_1},P_{ik_2},P_{jl_2}$ belongs to four different columns of $P$. Hence, by multiplying $P$ on the left by an identity matrix where the $1$ are $k_1$ position on the diagonal has been replaced by $-1$, we can change $P_{ik_1}$ to $-P_{ik_1}$ and $P_{ik_1}P_{jl_1}P_{ik_2}P_{jl_2}$ to $-P_{ik_1}P_{jl_1}P_{ik_2}P_{jl_2}$. Since the distribution of $P$ is invariant with such operation, we deduce, by symmetry, that $\mathbb{E}[P_{ik_1}P_{jl_1}P_{ik_2}P_{jl_2}]=0$. 

More generally, let us now consider the case where $k_1,l_1,k_2,l_2$ take at least $3$ different values. Then by a similar reasoning, since once column of $P$ must contain at least a single index among $k_1,k_2,l_1,l_2$ then we can show (by multiplying this column by $-1$) that $P_{ik_1}P_{jl_1}P_{ik_2}P_{jl_2}$ has the same distribution as $-P_{ik_1}P_{jl_1}P_{ik_2}P_{jl_2}$, proving again that $\mathbb{E}[P_{ik_1}P_{jl_1}P_{ik_2}P_{jl_2}]=0$. Hence, we need to consider two cases: $k_1=k_2$, or $k_1=l_1$ (notice that $k_1=l_2$ is the same case as $k_1=l_1$).

First, let us assume that $k_1=k_2$. Then, by the previous point, we must also have that $l_1=l_2$. 
Now, let us consider the case $k_1=l_1$. Again, we must have $k_2=l_2$, otherwise we would have at least $3$ distinct columns.

In summary, we have proved, by considering the three cases: $k_1=k_2$, $l_1=l_2$; $k_1=l_1$, $k_2=l_2$ ; and$k_1=l_2$, $k_2=l_1$, that:
\begin{align*}
    &\mathbb{E}\left[\|\grad \widetilde F_X(I_r)\|^2\right] \\
    &=\sum_{i\neq j=1}^r\left(\sum_{l,k=1}^n\mathbb{E}[P_{ik}^2P_{jl}^2]M_{kl}^2+\sum_{l,k=1}^n\mathbb{E}[P_{ik}P_{jk}P_{il}P_{jl}]M_{kk}M_{ll}+\sum_{l,k=1}^n\mathbb{E}[P_{ik}P_{jk}P_{il}P_{jl}]M_{kl}M_{lk}\right).
\end{align*}
Which implies, by anti-symmetry of $M$ ($M_{lk}=-M_{kl}$ and $M_{kk}=M_{ll}=0$):
\begin{equation}\label{eq:HaarENorm}
    \mathbb{E}\left[\|\grad \widetilde F_X(I_r)\|^2\right]=\sum_{i\neq j=1}^r\sum_{l\neq k=1}^n\left(\mathbb{E}[P_{ik}^2P_{jl}^2]-\mathbb{E}[P_{ik}P_{jk}P_{il}P_{jl}]\right)M_{kl}^2.
\end{equation}
Let us now compute $\mathbb{E}[P_{ik}^2P_{jl}^2]$, for $k\neq l$.
Notice for all $i$, $\sum_{k=1}^nP_{ik}^2=1$.
Hence, by multiplying two of this equality (for $i$ and $j$) and taking the expectation, we get that for all $i \neq j$,
$$\sum_{l,k=1}^n\mathbb{E}[P_{ik}^2P_{jl}^2]=1,$$
which implies that
$$\sum_{l\neq k=1}^n\mathbb{E}[P_{ik}^2P_{jl}^2]=1-\sum_{k=1}^n\mathbb{E}[P_{ik}^2P_{jk}^2].$$
However, notice that for all $k\neq l$, the joint law of $P_{ik},P_{jl}$
is the same. Indeed the law of $P$ does not change by permuting the columns of $P$, which implies that for all $k,l$, the law of $P_{ik},P_{jl}$ is the same as the joint law of $P_{i1},P_{j2}$.
Hence we have that from the previous equation that 
\begin{equation}\label{eq:relation1}
    (n^2-n)\mathbb{E}[P_{i1}^2P_{j2}^2]=1-n\mathbb{E}[P_{i1}^2P_{j1}^2]
\end{equation}
Furthermore using that 
$$\left(\sum_{k=1}^nP_{ik}P_{jk}\right)^2=0,$$
leading to 
$$\sum_{k\neq l =1}^nP_{ik}P_{jk}P_{il}P_{jl}+\sum_{k=1}^nP_{ik}^2P_{jk}^2=0.$$
Since, by permuting the rows of $P$, $P_{ik}P_{jk}P_{il}P_{jl}$ has the same law as $P_{i1}P_{j1}P_{i2}P_{j2}$, we found that
\begin{equation}\label{eq:relation2}
    (n^2-n)\mathbb{E}[P_{i1}P_{j1}P_{i2}P_{j2}]+n\mathbb{E}[P_{i1}^2P_{j1}^2]=0.
\end{equation}
Notice that \eqref{eq:HaarENorm} leads to
\begin{equation*}
    \mathbb{E}\left[\|\grad \widetilde F_X(I_r)\|^2\right]=\sum_{i\neq j=1}^r\sum_{l\neq k=1}^n\left(\mathbb{E}[P_{i1}^2P_{j2}^2]-\mathbb{E}[P_{i1}P_{j1}P_{i2}P_{j2}]\right)M_{kl}^2.
\end{equation*}
Hence
\begin{equation}\label{eq:HaarENorm2}
 \mathbb{E}\left[\|\grad \widetilde F_X(I_r)\|^2\right] =(r^2-r)\left(\mathbb{E}[P_{i1}^2P_{j2}^2]+\mathbb{E}[P_{i1}P_{j1}P_{i2}P_{j2}]\right)\|\grad F_X(I_n)\|^2.
\end{equation}
Using \eqref{eq:relation1}, we found that
$$\mathbb{E}[P_{i1}^2P_{j2}^2]=\frac{1-n\mathbb{E}[P_{i1}^2P_{j1}^2]}{n^2-n},$$
and using \eqref{eq:relation2}, we found that 
$$\mathbb{E}[P_{i1}P_{j1}P_{i2}P_{j2}]=-\frac{n\mathbb{E}[P_{i1}^2P_{j1}^2]}{n^2-n}$$
Hence,
$$ \mathbb{E}\left[\|\grad \widetilde F_X(I_r)\|^2\right] =\frac{r^2-r}{n^2-n}\|\grad F_X(I_n)\|^2.$$
Thus the proof is now complete.
\end{proof}

\subsection{Proof of Theorem \ref{thm:main_convergenceprob}}

{
Here we first prove Theorem \ref{thm:main_convergenceprob}, which is the high probability convergence guarantees. Towards this end, we require the following proposition that deduces a concentration inequality on $\|\grad \widetilde F_X(I_r)\|^2$.

}

\begin{proposition}\label{prop:concentration}
    When $P$ is sampled uniformly from $\gO(n)$, we have that
    \begin{equation*}
        \mathbb P \left( \|\grad \widetilde F_X(I_r)\|^2 \geq \frac{r(r-1)}{2n(n-1)} \| \grad F_X(I_n) \|^2 \right) \geq 1-\exp \left(-\frac{r^2(r-1)^2}{2048 n^2(n-1)^2}\right)
    \end{equation*} 
 When $P$ is sampled uniformly from $\gP(n)$ we have that
 \begin{equation*}
        \mathbb P \left( \|\grad \widetilde F_X(I_r)\|^2 \geq \frac{r(r-1)}{n(n-1)} \| \grad F_X(I_n) \|^2 \right) \geq {\binom{n}{r}}^{-1}
    \end{equation*} 
\end{proposition}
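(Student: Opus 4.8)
The plan is to treat the two sampling strategies by completely different mechanisms, tied together only through the exact mean computed in Proposition \ref{prop:1}. Throughout I would write $M := \grad F_X(I_n)$, which is skew-symmetric by \eqref{eq:grad}, and recall $\grad \widetilde F_X(I_r) = P(r) M P(r)^\top$, so that the quantity of interest $Z := \|\grad \widetilde F_X(I_r)\|^2$ satisfies $\mathbb{E}[Z] = \frac{r(r-1)}{n(n-1)}\|M\|^2$ in \emph{both} cases by Proposition \ref{prop:1}. The permutation bound is an averaging statement at the threshold $\mathbb{E}[Z]$, while the orthogonal bound is a lower-tail concentration statement at half the mean.

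For the permutation case I would use a pure averaging argument. When $P$ is a permutation matrix, $P(r)$ is a truncated permutation and, exactly as in the proof of Proposition \ref{prop:1}, $Z = \sum_{k,l \in S} M_{kl}^2$ where $S = \{\pi(1),\dots,\pi(r)\}$ is the set of sampled indices; the key point is that this depends only on the \emph{set} $S$, not on its ordering. Since $S$ is uniform over the $\binom{n}{r}$ subsets of size $r$, the average of $Z$ over these subsets equals $\mathbb{E}[Z] = \frac{r(r-1)}{n(n-1)}\|M\|^2$. Hence at least one subset $S^\ast$ attains $Z \ge \frac{r(r-1)}{n(n-1)}\|M\|^2$, and because $\mathbb{P}(S = S^\ast) = \binom{n}{r}^{-1}$, the claimed lower bound follows immediately.

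For the orthogonal case I would invoke concentration of measure for Lipschitz functions under the Haar measure on $\gO(n)$ \citep{Meckes}, applied \emph{directly} to $Z$ so as to exploit the exact mean from Proposition \ref{prop:1} rather than having to estimate $\mathbb{E}\|\grad \widetilde F_X(I_r)\|$. Two ingredients are needed. First, $Z$ is bounded: since $P(r)$ and $P$ have operator norm $1$, the map $A \mapsto P(r) A P(r)^\top$ is non-expansive in Frobenius norm, giving $Z \le \|M\|^2$ pointwise. Second, $Z$ is Lipschitz in $P$: writing $g(P) = \sqrt{Z}$, the identity $PMP^\top - QMQ^\top = PM(P-Q)^\top + (P-Q)MQ^\top$ together with $\|AB\| \le \|A\|_{\mathrm{op}}\|B\|$ yields $|g(P)-g(Q)| \le 2\|M\|_{\mathrm{op}}\|P-Q\|$, and combining with $g \le \|M\|$ shows $Z = g^2$ is $4\|M\|\,\|M\|_{\mathrm{op}}$-Lipschitz. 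The one-sided concentration inequality then gives $\mathbb{P}(Z \le \mathbb{E}[Z] - t) \le \exp(-c(n-2)t^2/(\|M\|^2\|M\|_{\mathrm{op}}^2))$; taking $t = \tfrac12\mathbb{E}[Z] = \frac{r(r-1)}{2n(n-1)}\|M\|^2$, substituting $\mathbb{E}[Z]$, and discarding the favourable factors $n-2 \ge 1$ and $\|M\|^2/\|M\|_{\mathrm{op}}^2 \ge 1$ collapses the exponent to the stated $\tfrac{r^2(r-1)^2}{2048\,n^2(n-1)^2}$ after tracking the absolute constant.

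The main obstacle is the orthogonal case, because we need a \emph{lower} bound on a probability — an anti-concentration-type statement — whereas concentration of measure is naturally a one-sided deviation bound. The resolution is to phrase it as a lower-tail bound for $Z$ centered at its \emph{known} mean, which is precisely why I apply the inequality to $Z$ itself and lean on Proposition \ref{prop:1}: this sidesteps the need to lower bound $\mathbb{E}[g]$, a step that is genuinely delicate in the small-$r$ regime where the variance of $g$ is comparable to $\mathbb{E}[g^2]$. The remaining work is bookkeeping — fixing the Lipschitz constant in the correct (operator versus Frobenius) norm and matching the universal constant $2048$, where one must be deliberately wasteful, dropping the $(n-2)$ factor, in order to present a clean dimension-free exponent.
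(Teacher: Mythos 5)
Your proposal is correct and follows essentially the same route as the paper: the permutation case is the identical pigeonhole/averaging argument over the $\binom{n}{r}$ equally likely index sets, and the orthogonal case is the same Lipschitz-concentration argument applied directly to $Z=\|\grad \widetilde F_X(I_r)\|^2$ with threshold $t=\tfrac12\mathbb{E}[Z]$ taken from Proposition \ref{prop:1}, discarding the favourable dimension factor to reach the constant $2048$. The only difference is cosmetic: the paper views $P(r)^\top$ as uniform on $\St(n,r)$ and extracts the Lipschitz constant $8\|M\|_F^2$ from the Riemannian gradient of $h(X)=\|X^\top M X\|^2$ (citing \citet{gotze2023higher}), whereas you work on $\gO(n)$ and obtain the slightly sharper constant $4\|M\|_F\|M\|_{\mathrm{op}}$ by an algebraic splitting — both yield the stated bound.
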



\begin{proof}[Proof of Proposition \ref{prop:concentration}]

First, let us consider the case where  $P$ is sampled uniformly from $\gO(n)$.  When $P$ is sampled uniformly from $\gO(n)$, we can see $P(r)$ is sampled uniformly from $\St(n,r)$. Then by Proposition \ref{prop:1},
  $$ \mathbb{E}\|\grad \widetilde F_X(I_r)\|^2=\frac{r(r-1)}{n(n-1)} \|\grad F_X(I_n)\|^2.$$

In order to derive a high-probability result, we define the following function $h: \St(n,r) \rightarrow \sR$ that $h(X) = \| X^\top M X \|^2$ where $M \in \mathbb{R}^n$ is any Skew-symmetric matrix. And it can be verified that when $M = \grad F_X(I_n)$, we can show $h(P(r)^\top) = \| \grad \widetilde F_X(I_r) \|^2$.

Let us now compute a Lipschitz constant $L_h$ for the function $h$. For that, we compute the Riemannian gradient  $\grad h(X)$. Le us first compute the Euclidean gradient $\nabla h(X)$. We have, by anti-symmetry of $M$:
   $$\nabla h(X)=-4 MX X^\top MX.$$
  We therefore deduce the Riemannian gradient:
\begin{align*}
    \grad h(X) &=-4 MX X^\top MX+ 4X \{ X^\top  MX X^\top MX \}_S \\
    &=-4 MX X^\top MX+ 4XX^\top  MX X^\top MX
\end{align*}
  This implies that in order to find the Lipschitz constant $L_h$, we need to bound $\|MX X^\top MX\|$ and $\|XX^\top  MX X^\top MX\|$.
  Using that for any matrix $A,B$, we have that $\|AB\|_F\le \|A\|_2\|B\|_F$ and that $\|X\|_2 \le 1$ for any $X \in \St(n,r)$, we can bound the two term above by $\|M\|_F^2$.
  Hence, we deduce that we can take $L_h=8\|M\|^2$ as the Lipschitz constant for $h$. From \citep{gotze2023higher}, we deduce that for any $t>0$, we have
  $$ \mathbb{P}\left(h(X)\le \frac{r(r-1)}{n(n-1)} \|M\|^2 -t \right)\le \exp \left(-\frac{(n-1)t^2}{512 \|M\|^4}\right).$$
  Hence, by taking $t=\frac{1}{2}\frac{r(r-1)}{n(n-1)} \|M\|^2$, we deduce that
  $$ \mathbb{P}\left(h(X)\le \frac{r(r-1)}{2n(n-1)} \|M\|^2 \right)\le \exp \left(-\frac{r^2(r-1)^2}{2048 n^2(n-1)^2}\right).$$
  This ends the proof for the case where $P$ is sampled uniformly from $\gO(n)$. Notice that the permutation case is obvious as each element is sampled with probability ${\binom{n}{r}}^{-1}$, and at least one element $P$ should induce a value $h(P(r)^\top)$ larger than $\mathbb{E}[h(P(r)^\top)]$.
\end{proof}

\begin{proof}[Proof of Theorem \ref{thm:main_convergenceprob}]
By Lemma \ref{lemma:ftilde_smooth}, we see  $\widetilde F_X(Y)$ is $L$-smooth with $L = C_0 + C_1$.
Then we have for any $Y$ and $W \in T_Y \gO(r,r)$,
\begin{align*}
    \widetilde F_X(\Retr_Y(W)) \leq \widetilde F_X(Y) + \langle \grad \widetilde F_X(Y),  W\rangle + \frac{L}{2} \| W\|^2.
\end{align*}

Applying this inequality to the update and recalling $\widetilde F_k(Y) = F(U(Y)X_k)$, we have
\begin{align}
    F(X_{k+1}) = \widetilde F_k(\Retr_{I_r} (- \eta \, \grad \widetilde F(I_r)) ) &\leq \widetilde F_k(I_r) - \left(\eta - \frac{\eta^2L}{2} \right)  \| \grad \widetilde F_k(I_r) \|^2 \nonumber \\
    &=  F(X_k) - \frac{1}{2L} \| \grad \widetilde F_k(I_r) \|^2 \label{eiirgkgr}
\end{align}
where we note that $\widetilde F_k(I_r) = F(X_k)$.

   Hence, we deduce from Proposition \ref{prop:concentration} and Lemma \ref{lemma:grad_fk_grad_f}, that
    $$F(X_{k+1})-F(X_k) \leq - \frac{1}{4L} \frac{r(r-1)}{n(n-1)} \| \grad F_k(I_n) \|^2 \le -\frac{1}{8L} \frac{r(r-1)}{n(n-1)} \|\grad F(X_k)\|^2,$$
    holds with probability at least $1-\exp \left(-\frac{r^2(r-1)^2}{2048 n^2(n-1)^2}\right):=1-\tau(n,r)$.
    Let us denote, for all $k$, by $Y_k \in \{0,1\}$ the random variable equal to one if and only if the above inequality holds. We have that
    $\mathbb{E}[Y_k]\ge 1-\tau(n,r)$, furthermore since $F(X_{k+1})\le F(x_k)$, we have that for all $k$,
    $$ \|\grad F(X_k)\|^2Y_k \le {8L} \frac{n(n-1)}{r(r-1)} (F(X_k)-F(X_{k+1})).$$
    Hence $$ \left(\min_{i = 0,...,k-1}  \|\grad F(X_i)\|^2\right) \frac{1}{k}\sum\limits_{i=0}^{k-1} Y_i \le \frac{1}{k}\sum\limits_{i=0}^{k-1} \|\grad F(X_i)\|^2Y_i\le \frac{8L}{k} \frac{n(n-1)}{r(r-1)} (F(X_0)-F^*) .$$
    We have by a Chernoff bound (see \cite{vershynin2018high}), that for all $\delta \in (0,1)$,
 \begin{equation}\label{eq:Chernoff_bound}
  \mathbb{P}\left(\sum\limits_{i=0}^{k-1}Y_i \ge (1-\delta )(1-\tau(n,r))k\right) \ge 1-\exp\left(-\frac{\delta^2}{2}(1-\tau(n,r))k\right).
  \end{equation}
  Hence, we deduce that with probability at least $1-\exp\left(-\frac{\delta^2}{2}(1-\tau(n,r))k\right)$,
  $$\min_{i = 0,...,k-1}  \|\grad F(X_i)\|^2 \le  \frac{8L}{k} \frac{n(n-1)}{(1-\delta )(1-\tau(n,r))r(r-1)} (F(X_0)-F^*) . $$
Finally, the proof for the permutation case is exactly similar and thus we omit here. 
This suggests $\lim\inf_{k \rightarrow \infty} \| \grad F(X_k) \|^2 = 0$ almost surely. 

{
For the analysis under the Riemannian PL condition, once there exists $k_0$ such that $X_{k_0} \in \mathcal{U}$, then by the convergence almost surely in gradient norm and the fact that $X^*$ is an isolated local minima, we conclude that there exists a subsequence $X_{k_j}$, for $k_0 \leq k_1 < k_2 < ...$ converging to $X^*$. For such a subsequence, it is clear that $F(X_{k_j})$ converges to $F(X^*)$. Furthermore, because $F(X_{k})$ converges as in \eqref{eiirgkgr}, then $F(X_k)$ must converge to $F(X^*)$ almost surely. By \citep{rebjock2024fast}, we also know that quadratic growth holds (due to PL condition), i.e., $F(X_k) - F(X^*) \geq \frac{\mu}{2} \| X_k - X^*\|^2$ (by $X^*$ is isolated). Then we have $\| X_k - X^* \|^2 \rightarrow 0$ almost surely. Thus, $X_k \in \mathcal{U}$ for all $k \geq k_0$. 
}

Next, we derive the convergence rate. If we use orthogonal sampling, we show the following results by induction:
\begin{align*}
    F(X_{k_0+k}) -F(X^*) \leq \left( 1  -\frac{\mu}{4L}  \frac{r(r-1)}{n(n-1)} \right)^{\sum_{i=0}^{k-1} Y_i} \big( F(X_{k_0})-F(X^*) \big) 
\end{align*}
where $Y_i \in \{ 0,1\}$ is the same random variable defined above. It is clear at $k = 1$, by \eqref{eiirgkgr} and by the same argument as above, we have 
\begin{align*}
    F(X_{k_0+1}) -F(X^*)  & = F(X_{k_0+1})-F(X_{k_0}) + F(X_{k_0})-F(X^*) \\
   &\leq -\frac{1}{8L} \frac{r(r-1)}{n(n-1)} \| \grad F(X_{k_0}) \|^2 Y_{0} + F(X_{k_0})-F(X^*) \\
   &\leq \left( 1  -\frac{\mu}{4L}  \frac{r(r-1)}{n(n-1)} \right)^{Y_{0}}  \big(  F(X_{k_0})-F(X^*)\big) 
\end{align*}
Now there exists an iteration $k' \ge 1$ such that for all $k < k'$, we have 
\begin{align*}
    F(X_{k_0+k}) -F(X^*) \leq \left( 1  -\frac{\mu}{4L}  \frac{r(r-1)}{n(n-1)} \right)^{\sum_{i=0}^{k-1} Y_i} \big( F(X_{k_0})-F(X^*) \big).
\end{align*}
Then we verify for $k = k'$,
\begin{align*}
    F(X_{k_0+k'}) -F(X^*) & = F(X_{k_0+k'})-F(X_{k_0+k'-1}) + F(X_{k_0+k'-1})-F(X^*) \\
    &\leq -\frac{1}{8L} \frac{r(r-1)}{n(n-1)} \| \grad F(X_{k_0+k'-1}) \|^2 Y_{k'-1} + F(X_{k_0+k'-1})-F(X^*) \\
    &\leq \left( 1  -\frac{\mu}{4L}  \frac{r(r-1)}{n(n-1)} \right)^{Y_{k'-1}}\big( F(X_{k_0+k'-1})-F(X^*) \big) \\
    &\leq \left( 1  -\frac{\mu}{4L}  \frac{r(r-1)}{n(n-1)} \right)^{Y_{k'-1}} \left( 1  -\frac{\mu}{4L}  \frac{r(r-1)}{n(n-1)} \right)^{\sum_{i=0}^{k'-2} Y_i} \big( F(X_{k_0})-F(X^*) \big) \\
    &\leq \left( 1  -\frac{\mu}{4L}  \frac{r(r-1)}{n(n-1)} \right)^{\sum_{i=0}^{k'-1} Y_i} \big( F(X_{k_0})-F(X^*) \big) 
\end{align*}
where the second last inequality is by induction. This completes the induction. Then using a similar argument, we have 
\begin{align*}
    F(X_{k_0+k}) -F(X^*) &\leq \left( 1  -\frac{\mu}{4L}  \frac{r(r-1)}{n(n-1)} \right)^{\sum_{i=0}^{k-1} Y_i} \big( F(X_{k_0})-F(X^*) \big) \\
    &\leq \left( 1  -\frac{\mu}{4L}  \frac{r(r-1)}{n(n-1)} \right)^{(1-\delta )(1-\tau(n,r))k} \big( F(X_{k_0})-F(X^*) \big)  \\
    &\leq \exp\left( -  \frac{\mu}{4L}  \frac{r(r-1)}{n(n-1)} (1-\delta)(1-\tau(n,r)) k  \right) \big( F(X_{k_0})-F(X^*) \big)
\end{align*}
with probability at least $1-\exp\big(-\frac{\delta^2}{2}(1-\tau(n,r))k\big)$ by \eqref{eq:Chernoff_bound}. 
The proof for the permutation case is the same and thus omitted.

For simplicity, we fix $\delta = 1/2$ such that the results hold with probability at least $1- \exp\left(-(1-\tau(n,r)) k/8 \right)$ for orthogonal sampling and results hold with probability at least $1 - \exp(- \binom{n}{r}^{-1} k/ 8) $ for permutation sampling.
\end{proof}

\subsection{Proof of Theorem \ref{thm:main_convergence}}
\label{sect_proof:them1}


\begin{proof}[Proof of Theorem \ref{thm:main_convergence}]
By Lemma \ref{lemma:ftilde_smooth}, we see  $\widetilde F_X(Y)$ is $L$-smooth with $L = C_0 + C_1$.
Then we have for any $Y$ and $W \in T_Y \gO(r,r)$,
\begin{align*}
    \widetilde F_X(\Retr_Y(W)) \leq \widetilde F_X(Y) + \langle \grad \widetilde F_X(Y),  W\rangle + \frac{L}{2} \| W\|^2.
\end{align*}

Applying this inequality to the update and recalling $\widetilde F_k(Y) = F(U(Y)X_k)$, we have
\begin{align*}
    F(X_{k+1}) = \widetilde F_k(\Retr_{I_r} (- \eta \, \grad \widetilde F(I_r)) ) &\leq \widetilde F_k(I_r) - \left(\eta - \frac{\eta^2L}{2} \right)  \| \grad \widetilde F_k(I_r) \|^2 \\
    &=  F(X_k) - \frac{1}{2L} \| \grad \widetilde F_k(I_r) \|^2 
\end{align*}
where we note that $\widetilde F_k(I_r) = F(X_k)$. Taking expectation on both sides with respect to the randomness in the current iteration, we have 
\begin{align*}
    \sE_k F(X_{k+1}) &\leq F(X_k) - \frac{1}{2L}  \sE_k \| \grad \widetilde F_k(I_r) \|^2 \\
    &= F(X_k) - \frac{1}{2L} \frac{r(r-1)}{n(n-1)} \|\grad F_k(I_n)\|^2 
\end{align*}
where recall $F_k(U) = F(U X_k)$.

Hence by Lemma \ref{lemma:grad_fk_grad_f}, we have $\| \grad F_k (I_n) \|^2 \geq \frac{1}{2} \| \grad F(X_k) \|^2$ and taking full expectation, 
\begin{equation}\label{eq:decreasegrad}
    \mathbb{E}[F(X_{k+1})-F(X_k)] \le -\frac{1}{4L} \frac{r(r-1)}{n(n-1)} \mathbb{E}[\|\grad F(X_k)\|^2],
\end{equation}
Hence telescoping the inequality from $i = 0,..., k-1$ gives
$$ \frac{1}{k}\sum\limits_{i=0}^{k-1} \mathbb{E}[\|\grad F(X_i)\|^2] \le \frac{4L}{k} \frac{n(n-1)}{r(r-1)} (F(X_0)-F^*) .$$
Then we notice that $\min_{i = 0, ..., k-1} \sE[ \| \grad F(X_i) \|^2 ] \leq \frac{1}{k} \sum_{i=0}^{k-1} \mathbb{E}[\|\grad F(X_i)\|^2]$ finishes the proof for the non-convex case. 

To show the second result on convergence under PL condition, we notice that by Definition \ref{def:PL}, and that $\min_{X\in \mathcal{U}}F(X)= F(X^*)$. {Then once $X_{k_0} \in \mathcal{U}$, we can follow the same proof that $X_k \in \mathcal{U}$ for all $k \geq k_0$. 
}
Then we have
    using, \eqref{eq:decreasegrad} and Definition \ref{def:PL} that
    \begin{align*}
        \mathbb{E}[F(X_{k+1}) -F(X^*) ]&= \mathbb{E}[F(X_{k+1})-F(X_k)] +\mathbb{E}[F(X_k)-F(X^*)] \\ 
        &\le -\frac{1}{4L} \frac{r(r-1)}{n(n-1)} \mathbb{E}[\|\grad F(X_k)\|^2]+ \mathbb{E}[F(X_k)-F(X^*)] \\
        &\le -\frac{2\mu }{4L} \frac{r(r-1)}{n(n-1)} \mathbb{E}[F(X_k)-F(X^*)]+ \mathbb{E}[F(X_k)-F(X^*)]\\
        & \le \left(1- \frac{\mu }{2L} \frac{r(r-1)}{n(n-1)} \right) \mathbb{E}[F(X_k)-F(X^*)]. 
    \end{align*}
Let $k_0$ be a sufficiently large iteration such that $X_{k_0} \in \mathcal{U}$. Then, we have $\mathbb{E}[F(X_{k_0 + k}) -F(X^*) ] \leq \exp(  - \frac{\mu }{2L} \frac{r(r-1)}{n(n-1)} k) \mathbb{E}[F(X_k)-F(X^*)]$, where we use $(1-a)^k \leq \exp(-ak)$ for $k > 0$.
\end{proof}

\subsection{Proof of Theorem \ref{them:stochastic}}

\begin{proof}[Proof of Theorem \ref{them:stochastic}]
From Lemma \ref{lemma:ftilde_smooth}, we know that $F$ is $L$-smooth, where $L = C_0 + C_1$. Then 
\begin{align*}
    F(X_{k+1}) &= \widetilde F_k(\Retr_{I_r} ( - \eta \grad \tilde f_k(I_r; \xi_k) ) ) \\
    &\leq \widetilde F_k (I_r) - \eta \langle \grad \widetilde F_k(I_r) , \grad \tilde f_k(I_r; \xi_k) \rangle + \frac{\eta^2 L}{2} \|   \grad \tilde f_k(I_r ; \xi_k)  \|^2.
\end{align*}
Taking expectation with respect to $\xi_k$, we obtain
\begin{align*}
    \sE_{\xi_k} [ F(X_{k+1}) ] &\leq \sE_{\xi_k} [F (X_k) ] - \eta \| \grad \widetilde F_k(I_r) \|^2 + \frac{\eta^2 L}{2} \sE_{\xi_k} \| \grad \tilde f_k(I_r; \xi_k)  \|^2, 
\end{align*}
where we notice $\widetilde F_k(I_r) = F(X_k)$ and use the unbiasedness assumption. In addition, we can bound 
\begin{align*}
    &\sE_{\xi_k} \|   \grad \tilde f_k(I_r; \xi_k) - \grad \widetilde F_k(I_r) \|^2 \\
    &= \sE_{\xi_k} \| P_k(r) \big( \grad f_k(I_n; \xi_k)   - \grad F_k(I_n) \big) P_k(r)^\top \|^2 \\
    &\leq \frac{1}{4}  \sE_{\xi_k} \|   \big( \nabla f(X_k; \xi_k) X_k^\top - \nabla F(X_k) X_k^\top \big) + \big (  X_k \nabla F(X_k)^\top - X_k \nabla f(X_k; \xi_k) \big)   \|^2 \\
    &\leq \sE_{\xi_k} \| \nabla f(X_k; \xi_k)  - \nabla F(X_k)  \|^2 \leq \sigma^2
\end{align*}
where we use the definition of $\widetilde f_k, f_k, \widetilde F_k, F_k$ and orthogonality of $P_k(r)$ and $X_k$. The last inequality is by bounded variance assumption. 

Then we further expand
\begin{align*}
    \sE_{\xi_k} \| \grad \tilde f_k(I_r; \xi_k) \|^2 &= \sE_{\xi_k} \|\grad \widetilde F_k(I_r) \|^2 + \sE_{\xi_k} \|  \grad \tilde f_k(I_r; \xi_k) - \grad \widetilde F_k(I_r)\|^2 \\
    &\leq \|\grad \widetilde F_k(I_r) \|^2 + \sigma^2,
\end{align*}
where we use the unbiasedness in the first equality. This gives 
\begin{align*}
    \sE_{\xi_k} [ F(X_{k+1}) ] &\leq \sE_{\xi_k}[F(X_k)] - \big( \eta - \frac{\eta^2 L}{2} \big) \| \grad \widetilde F_k(I_r) \|^2 + \frac{\eta^2 L \sigma^2}{2}.
\end{align*}
Now taking expectation with respect to the randomness in $P_k$, we can show from Proposition \ref{prop:1} that 
\begin{align*}
    \sE_k [F(X_{k+1})] &\leq F(X_k) - \big( \eta - \frac{\eta^2 L}{2} \big) \frac{r(r-1)}{n(n-1)} \| \grad F_k (I_n) \|^2 + \frac{\eta^2 L \sigma^2}{2} \\
    &\leq F(X_k) - \big( \eta - \frac{\eta^2 L}{2} \big) \frac{r(r-1)}{2n(n-1)} \| \grad F (X_k) \|^2 + \frac{\eta^2 L \sigma^2}{2},
\end{align*}
where we denote $\sE_k$ to represent the expectation over randomness in iteration $k$ and the second inequality is by Lemma \ref{lemma:grad_fk_grad_f}. Taking full expectation, we obtain 
\begin{align}
   \sE[ F(X_{k+1}) - F(X_{k}) ]\leq  - \big( \eta - \frac{\eta^2 L}{2} \big) \frac{r(r-1)}{2n(n-1)} \sE \| \grad F (X_k) \|^2   + \frac{\eta^2 L \sigma^2}{2} \label{temp:stoc}
\end{align}

Rearranging the terms and summing over $k \in [K]$ gives 
\begin{align*}
    \big( \eta - \frac{\eta^2 L}{2} \big) \frac{r(r-1)}{2n(n-1)} \sum_{k = 0}^{K-1} \sE \| \grad F(X_k) \|^2 \leq  F(X_0) - F^* + \frac{K \eta^2 L \sigma^2}{2}.
\end{align*}
Choosing $\eta = \min\{L^{-1}, c \sigma^{-1} K^{-1/2} \}$ for some constant $C > 0$. Then $\eta - \eta^2L/2 \geq \eta/2$ and thus we can show
\begin{align*}
    \frac{1}{K} \sum_{k=0}^{K-1} \sE \| \grad F(X_k) \|^2 &\leq \frac{1}{K} \frac{4n(n-1)}{r(r-1)} \max\{ L, \sigma \sqrt{K}/ c \} \Delta_0 + \frac{4n(n-1)}{r(r-1)} { \eta L \sigma^2} \\
    &\leq \frac{4n(n-1)}{r(r-1)}\Big(  \frac{L \Delta_0}{K}  + \frac{\sigma \Delta_0}{c\sqrt{K}} + \frac{L \sigma c}{\sqrt{K}}\Big) \\
    &= \frac{4n(n-1)}{r(r-1)}\Big(  \frac{L \Delta_0}{K}  + \frac{2\sigma \sqrt{\Delta_0 L}}{\sqrt{K}}\Big) 
\end{align*}
where we let $\Delta_0 = F(X_0) - F^*$ and we choose $c = \sqrt{\Delta_0/L}$ to minimize the upper bound. Finally, noticing $\min_{i = 0,...K-1} \sE \| \grad F(X_k) \|^2 \leq  \frac{1}{K} \sum_{k=0}^{K-1} \sE \| \grad F(X_k) \|^2$ completes the proof under nonconvex loss.

To show the second result on convergence under PL condition, suppose $X_k \in \mathcal{U}$.
For such $k$, we have by \eqref{temp:stoc} 
    \begin{align*}
        \mathbb{E}[F(X_{k+1}) -F(X^*) ]&= \mathbb{E}[F(X_{k+1})-F(X_k)] +\mathbb{E}[F(X_k)-F(X^*)] \\ 
        &\le - \big( \eta - \frac{\eta^2 L}{2} \big) \frac{r(r-1)}{2n(n-1)} \sE \| \grad F (X_k) \|^2   + \frac{\eta^2 L \sigma^2}{2}+ \mathbb{E}[F(X_k)-F(X^*)] \\
        &\le - \big( \eta - \frac{\eta^2 L}{2} \big) \frac{r(r-1)}{n(n-1)}  \mu \sE [ F(X_k) - F(X^*) ]   + \frac{\eta^2 L \sigma^2}{2}+ \mathbb{E}[F(X_k)-F(X^*)]\\
        & = \left(1- \mu(\eta - \eta^2L/2) \frac{r(r-1)}{n(n-1)} \right)\mathbb{E}[F(X_k)-F(X^*)] + \frac{\eta^2 L \sigma^2}{2}
    \end{align*}
Given $\eta \leq L^{-1}$, we obtain $\mathbb{E}[F(X_{k+1}) -F(X^*) ] \leq \big(1- \frac{\mu}{2L} \frac{r(r-1)}{n(n-1)} \big)\mathbb{E}[F(X_k)-F(X^*)] + \frac{ \sigma^2}{2L}$. {Then for $k_0, ..., k_1$ such that $X_{k_0}, ..., X_{k_1} \in \mathcal{U}$,} we have 
{
\begin{align*}
    &\sE [ F(X_{k_1}) -F(X^*) ]  \\
    &\leq \left(1-\frac{\mu}{2L} \frac{r(r-1)}{n(n-1)} \right)^{k_1-k_0}  \mathbb{E}[F(X_{k_0})-F(X^*)]   + \frac{\sigma^2}{2L} \sum_{i=0}^{k_1-k_0-1} \left(1-\frac{\mu}{2L} \frac{r(r-1)}{n(n-1)}\right)^i \\
    &\leq  \left(1-\frac{\mu}{2L} \frac{r(r-1)}{n(n-1)} \right)^{k_1-k_0}  \mathbb{E}[F(X_{k_0})-F(X^*)] + \frac{\sigma^2}{\mu} \frac{n(n-1)}{r(r-1)} \\
    &\leq \exp\left( - \frac{\mu}{2L} \frac{r(r-1)}{n(n-1)} (k_1-k_0) \right)\mathbb{E}[F(X_{k_0})-F(X^*)] + \frac{\sigma^2}{\mu} \frac{n(n-1)}{r(r-1)}
\end{align*}
}
where the second inequality is by $\sum_{i=0}^{k-1} (1- \alpha)^i \leq \frac{1}{\alpha}$. 
\end{proof}

\section{Extension to finite-sum settings}
\label{app:finite_sum}

In this section, we generalize the analysis of Section \ref{sect:stochastic} to mini-batch settings, namely 
\begin{equation}
    \min_{X \in \St(n,p)} \Big\{ F(X) = \frac{1}{N} \sum_{i=1}^N f_i (X) \Big\}, \label{eq:minibatch}
\end{equation}
where $f_i, i \in [N]$ represents $N$ component functions. 

We accordingly modify Algorithm \ref{rsgm_stiefel_algo} by sampling $B_k \subset [N]$ uniformly from $[N]$ with replacement. Without loss of generality, we set $|B_k| = B$ for all $k$. Then we replace the Riemannian mini-batch gradient as 
\begin{align*}
    \grad \widetilde f_{B_k} (I_r) \coloneqq \frac{1}{B} \sum_{i \in B_k} \grad \widetilde f^k_i (I_r),
\end{align*}
where we use $\widetilde f_i^k(Y) = f_i(U_k(Y) X_k)$ similarly as before. Then we update the iterate with the mini-batch gradient $\grad \widetilde f_{B_k} (I_r) $.  Notice that by the definition, we have $\widetilde F_k(Y) = \frac{1}{N} \sum_{i=1}^N \widetilde f_i^k(Y)$. Because $B_k$ is sampled uniformly with replacement, we can easily see that $\sE_{B_k} [\grad \widetilde f_{B_k} (I_r)] = \grad \widetilde F_k(I_r)$. 

Before we analyze the convergence, we require the following assumption on bounded variance.
\begin{assumption}
\label{assump:bound_var_mini}
The gradient of each component function has bounded variance, i.e., $\sE \| \nabla f_i(X) - \nabla F(X) \|^2 \leq \sigma^2$ for any $i \in [N]$ and $X \in \St(n,p)$. 
\end{assumption}

\begin{theorem}
\label{them:minibatch}
Consider mini-batch setting as in \eqref{eq:minibatch} and solve with mini-batch gradient descent with batch size $B$.
Under Assumption \ref{assump:bound_grad_hess} and \ref{assump:bound_var_mini}, suppose we choose $\eta = \min\{L^{-1}, \sqrt{B \Delta_0/L} \sigma^{-1} K^{-1/2} \}$, where we denote $\Delta_0 = F(X_0) - F^*$. Then we can show
\begin{align*}
    \min_{i = 0,...K-1} \sE \| \grad F(X_k) \|^2 \leq \frac{4n(n-1)}{r(r-1)}\Big(  \frac{L \Delta_0}{K}  + \frac{2\sigma \sqrt{\Delta_0 L}}{\sqrt{K B}}\Big). 
\end{align*}
Suppose there exist $X_{k_0}, ..., X_{k_1} \in \mathcal{U}$ for some $k_1 > k_0$, where $\mathcal{U}$ is defined in Theorem \ref{thm:main_convergence}. Then we have $\mathbb{E}[F(X_{k_1}) -F(X^*) ]\le  \exp \big( - \frac{\mu}{2 L} \frac{r(r-1)}{n(n-1)} (k_1-k_0) \big)\mathbb{E}[F(X_{k_0})-F(X^*)] + \frac{ \sigma^2}{B\mu} \frac{n(n-1)}{r(r-1)}$.
\end{theorem}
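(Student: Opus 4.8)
The plan is to mirror the stochastic proof of Theorem \ref{them:stochastic} almost verbatim, with the only changes coming from the mini-batch variance reduction. First I would establish the one-step descent inequality by using the $L$-smoothness of $\widetilde F_k$ from Lemma \ref{lemma:ftilde_smooth}, applying it to the update $X_{k+1} = U_k(Y_k) X_k$ with the mini-batch gradient $\grad \widetilde f_{B_k}(I_r)$ in place of the single-sample stochastic gradient. Taking expectation over $B_k$ and using the unbiasedness $\sE_{B_k}[\grad \widetilde f_{B_k}(I_r)] = \grad \widetilde F_k(I_r)$ yields a descent bound involving $\| \grad \widetilde F_k(I_r)\|^2$ and the variance term $\sE_{B_k} \| \grad \widetilde f_{B_k}(I_r) - \grad \widetilde F_k(I_r)\|^2$.

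The key difference from the $B=1$ case is bounding this variance term. Because $B_k$ samples $B$ indices independently with replacement, the mini-batch gradient variance scales as $\sigma^2/B$ rather than $\sigma^2$. Concretely I would write $\grad \widetilde f_{B_k}(I_r) - \grad \widetilde F_k(I_r) = \frac{1}{B}\sum_{i \in B_k}(\grad \widetilde f_i^k(I_r) - \grad \widetilde F_k(I_r))$, and since the summands are i.i.d.\ mean-zero, the cross terms vanish in expectation, giving $\sE_{B_k}\| \cdot \|^2 = \frac{1}{B} \sE \| \grad \widetilde f_1^k(I_r) - \grad \widetilde F_k(I_r)\|^2$. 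Reusing the same projection argument as in Theorem \ref{them:stochastic} (exploiting orthogonality of $P_k(r)$ and $X_k$ together with Assumption \ref{assump:bound_var_mini}), each per-component variance is bounded by $\sigma^2$, so the total mini-batch variance is at most $\sigma^2/B$.

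From here the argument proceeds identically. Taking expectation over $P_k$ and invoking Proposition \ref{prop:1} to convert $\| \grad \widetilde F_k(I_r)\|^2$ into $\frac{r(r-1)}{n(n-1)}\| \grad F_k(I_n)\|^2$, then Lemma \ref{lemma:grad_fk_grad_f} to lower bound this by $\frac{r(r-1)}{2n(n-1)}\| \grad F(X_k)\|^2$, produces the analogue of \eqref{temp:stoc} but with $\sigma^2$ replaced by $\sigma^2/B$. Telescoping over $k$, choosing $\eta = \min\{L^{-1}, \sqrt{B\Delta_0/L}\,\sigma^{-1} K^{-1/2}\}$ and optimizing the constant (as in the single-sample case, where the $1/\sqrt{K}$ term becomes $1/\sqrt{KB}$ due to the reduced noise floor) gives the nonconvex rate. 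For the PL part, I would recurse the contraction inequality from $k_0$ to $k_1$ and sum the geometric series of noise terms, each now carrying the factor $\sigma^2/B$, yielding the stated additive floor $\frac{\sigma^2}{B\mu}\frac{n(n-1)}{r(r-1)}$.

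The proof is essentially routine given the stochastic theorem; there is no genuine obstacle. The only point requiring a small amount of care is the variance computation: one must confirm that sampling with replacement makes the per-component deviations independent so the variance contracts by exactly $1/B$, and that the projection-based bound of the stochastic proof applies uniformly to every component function $f_i$ under Assumption \ref{assump:bound_var_mini} rather than the aggregate Assumption \ref{assump:unbias_variance}. Everything else is a direct transcription with $\sigma^2 \mapsto \sigma^2/B$.
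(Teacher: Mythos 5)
Your proposal is correct and matches the paper's proof essentially verbatim: the same one-step descent via Lemma \ref{lemma:ftilde_smooth}, unbiasedness over $B_k$, the $\sigma^2/B$ variance bound from the with-replacement i.i.d.\ structure combined with the orthogonality of $P_k(r)$ and $X_k$, and then the identical conclusion of Theorem \ref{them:stochastic} with $\sigma^2$ replaced by $\sigma^2/B$. The only cosmetic difference is that you contract the variance by $1/B$ before applying the projection bound while the paper applies the projection bound first; the two steps commute, so nothing changes.
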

\begin{proof}[Proof of Theorem \ref{them:minibatch}]
From Lemma \ref{lemma:ftilde_smooth}, we know that $F$ is $L$-smooth, where $L = C_0 + C_1$. Then 
\begin{align*}
    F(X_{k+1}) &= \widetilde F_k(\Retr_{I_r} ( - \eta \grad \widetilde f_{B_k}(I_r ) ) ) \\
    &\leq \widetilde F_k (I_r) - \eta \langle \grad \widetilde F_k(I_r) , \grad \widetilde f_{B_k}(I_r ) \rangle + \frac{\eta^2 L}{2} \|   \grad \widetilde f_{B_k}(I_r ) \|^2.
\end{align*}
Taking expectation with respect to $B_k$, we obtain
\begin{align*}
    \sE_{B_k} [ F(X_{k+1}) ] &\leq \sE_{B_k} [F (X_k) ] - \eta \| \grad \widetilde F_k(I_r) \|^2 + \frac{\eta^2 L}{2} \sE_{B_k} \| \grad \widetilde f_{B_k} (I_r) \|^2, 
\end{align*}
where we notice $\widetilde F_k(I_r) = F(X_k)$ and by the unbiasedness. 
In addition, we can bound 
\begin{align*}
    &\sE_{B_k} \|   \grad \widetilde f_{B_k} (I_r) - \grad \widetilde F_k(I_r) \|^2 \\
    &= \sE_{B_k} \| P_k(r) \big(  \grad  f_{B_k} (I_n)  - \grad F_k(I_n) \big) P_k(r)^\top \|^2 \\
    &\leq \frac{1}{4}  \sE_{B_k} \|   \big( \nabla f_{B_k}(X_k ) X_k^\top - \nabla F(X_k) X_k^\top \big) + \big (  X_k \nabla F(X_k)^\top - X_k \nabla f_{B_k}(X_k) \big)   \|^2 \\
    &\leq \sE_{B_k} \| \nabla f_{B_k}(X_k )  - \nabla F(X_k)  \|^2 \\
    &= \frac{1}{B^2} \sum_{i\in B_k} \sE \| \nabla f_i (X_k) - \nabla F(X_k) \|^2 \leq \frac{\sigma^2}{B},
\end{align*}
where the second last inequality is by independence of samples in $B_k$ and the
last inequality is by bounded variance assumption (Assumption \ref{assump:bound_var_mini}).  

The subsequence analysis follows exactly the same as in Theorem \ref{them:stochastic}, where we replace $\sigma^2 $ with $\sigma^2/B$.  
\end{proof}

\section{On the exact convergence of RSDM}
\label{sect:exact_convergence}

In this section, we examine the exact convergence of RSDM. We require the projection matrices for the inner iterations satisfy the condition \eqref{eq:restart_condition}, which we recall below:
\begin{equation*}
    \sum_{s=0}^{S-1} \| P_{k}^s(r) \grad F_k(I_n) P_{k}^s(r)^\top \|^2 \geq C_p \| \grad F_k(I_n) \|^2 
\end{equation*}
for some constant $C_p > 0$. This is a non-degenerate condition over the selection of random matrix $P_k^s$ over certain iterations such that projected gradient does not vanish.

Let us define by $\mathcal{S}_n^r$ the set of all truncated permutation matrix $P(r)\in \mathbb{R}^{r \times n}$. To each element $P$ of $\mathcal{S}_n^r$, we can associate  a unique truncated permutation $\pi$ defined by 
    $$\forall i\le r:\ \pi(i) \mbox{ is such that } P^\top e_i=e_{\pi(i)}.$$
    Notice that $\pi$ is defined only for the first $r$ integers as the matrix $P \in \mathcal{S}_n^r$ has only $r$ rows. 
\begin{proposition}
\label{prop:exact_conv}
Let $S=\frac{n!}{(n-r)!}$, and assume that the matrices $\{P_k^s\}_{s=0}^{S-1}$  are randomly sampled, without replacement, from  $\mathcal{S}_n^r$ (at each iteration $s$, we pick randomly a matrix from  $\mathcal{S}_n^r$ that has not already been chosen) then condition \eqref{eq:restart_condition} holds with $C_p=\frac{(n-2)!r(r-1)}{(n-r)!}$.
\end{proposition}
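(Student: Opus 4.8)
The plan is to exploit the fact that the deterministic, without-replacement sampling scheme described in the statement is nothing but a complete enumeration of $\mathcal{S}_n^r$. Since $S = \frac{n!}{(n-r)!} = |\mathcal{S}_n^r|$ and we draw $S$ distinct matrices from $\mathcal{S}_n^r$, the multiset $\{P_k^s(r)\}_{s=0}^{S-1}$ is exactly the full set of truncated permutation matrices, each appearing once. Hence the left-hand side of \eqref{eq:restart_condition} equals the total sum $\sum_{P \in \mathcal{S}_n^r} \| P(r)\,M\, P(r)^\top \|^2$, where I write $M := \grad F_k(I_n)$, a skew-symmetric matrix. The crucial observation is that this turns the expectation identity of Proposition \ref{prop:1} into a purely deterministic summation identity: the uniform average over $\mathcal{S}_n^r$ appearing in that proof becomes the exact sum we need here, up to the normalizing factor $|\mathcal{S}_n^r|$.

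Next I would simply reuse the combinatorial count already carried out in the proof of Proposition \ref{prop:1}. Associating to each $P \in \mathcal{S}_n^r$ its truncated permutation $\pi$, skew-symmetry of $M$ gives $\| P(r) M P(r)^\top \|^2 = \sum_{1 \le i,j \le r} M_{(\pi(i),\pi(j))}^2$. Summing over all $\pi \in \mathcal{S}_n^r$ and counting, for each fixed pair $(k,l)$, the number $n_{k,l}$ of truncated permutations realizing $(k,l) = (\pi(i),\pi(j))$ for some ordered pair $i,j$, one has $n_{k,l} = \frac{r(r-1)}{2}\frac{(n-2)!}{(n-r)!}$, exactly as established there. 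Therefore
\[
    \sum_{P \in \mathcal{S}_n^r} \| P(r) M P(r)^\top \|^2 = 2 \cdot \frac{r(r-1)}{2}\frac{(n-2)!}{(n-r)!} \sum_{1 \le k,l \le n} M_{(k,l)}^2 = \frac{(n-2)!\, r(r-1)}{(n-r)!}\, \| M \|^2,
\]
where the factor $2$ accounts for passing from the $i<j$ sum to the full $i,j$ sum, and $\sum_{k,l} M_{(k,l)}^2 = \|M\|^2$.

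This immediately yields that condition \eqref{eq:restart_condition} holds, in fact with equality, for $C_p = \frac{(n-2)!\, r(r-1)}{(n-r)!}$. There is no genuine obstacle here: the entire content reduces to recognizing that exhaustive sampling collapses the probabilistic estimate of Proposition \ref{prop:1} into an exact identity, and then re-invoking its counting lemma. The only point requiring mild care is bookkeeping the factor of two between the skew-symmetric $i<j$ enumeration and the full index sum, and confirming that enumerating all of $\mathcal{S}_n^r$ indeed corresponds to $S = \frac{n!}{(n-r)!}$ inner iterations.
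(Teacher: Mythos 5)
Your proof is correct and is essentially the paper's own argument: both recognize that drawing $S=\frac{n!}{(n-r)!}$ matrices without replacement exhausts $\mathcal{S}_n^r$, turning the sum in \eqref{eq:restart_condition} into the full deterministic sum over all truncated permutations, and both then reuse the counting $n_{k,l}=\frac{r(r-1)}{2}\frac{(n-2)!}{(n-r)!}$ from the permutation case of Proposition \ref{prop:1} (equivalently, the unnormalized expectation identity) to obtain $C_p=\frac{(n-2)!\,r(r-1)}{(n-r)!}$ with equality.
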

\begin{proof}[Proof of Proposition \ref{prop:exact_conv}]
The proof follows the same idea with the proof of Proposition \ref{prop:1} in the permutation case. Indeed let us fix a matrix $P_k^s(r)$ in $\mathcal{S}_n^r$, with associated permutation $\pi$. Using the fact that $\grad F_k(I_n)$ is skew symmetric, we have
    $$ \|P_k^s(r)\grad F_k(I_n) P_k^s(r)^\top\|^2=2\sum_{1\le i<j\le r} (\grad F_k(I_n))^2_{(\pi(i),\pi(j))}=\sum_{1\le i,j\le r} (\grad F_k(I_n))^2_{(\pi(i),\pi(j))}.$$
Hence, we can write that
$$\sum_{s=0}^{S-1} \| P_{k}^s(r) \grad F_k(I_n) P_{k}^s(r)^\top \|^2=\sum\limits_{\pi \in \mathcal{S}_n^r}\sum_{1\le i,j\le r} (\grad F_k(I_n))^2_{(\pi(i),\pi(j))}. $$
Notice that we obtain the same expression as in the proof of Proposition \ref{prop:1} but without the factor $\frac{n!}{(n-r)!}$. Indeed, the summation on $s$ term in the above equation corresponds to what we denoted by $\mathbb{E}\|\grad \widetilde{F}_k(I_r)\|^2$ in the proof of the proposition. Hence following the same argument, we have that
$$\sum_{s=0}^{S-1} \| P_{k}^s(r) \grad F_k(I_n) P_{k}^s(r)^\top \|^2= \frac{n!}{(n-r)!} \frac{r(r-1)}{n(n-1)}\| \grad F_k(I_n) \|^2,$$
that is
$$\sum_{s=0}^{S-1} \| P_{k}^s(r) \grad F_k(I_n) P_{k}^s(r)^\top \|^2= \frac{(n-2)!r(r-1)}{(n-r)!} \| \grad F_k(I_n) \|^2.$$
\end{proof}

Next, we analyze the exact convergence if we sample according to condition \eqref{eq:restart_condition}. 

Before we derive the results, we recall the notation that $\widetilde{F}_k^s (Y):= F(U_k^s(Y) X_k^s)$ and $F_k^s(U) \coloneqq F(U X_k^s)$. We also require the following lemma from \cite{chen2020proximal} that bounds the retraction on Stiefel manifold with the Euclidean retraction, i.e., addition. 

\begin{lemma}[\citet{chen2020proximal}]
\label{lemma:bound_retr_euclid}
For all $X \in \St(n,p)$ and $U \in T_X\St(n,p)$, there exists a constant $M > 0$ such that $\| \Retr_X(U) - X \| \leq M \| U\|.$
\end{lemma}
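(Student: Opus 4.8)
The plan is to establish the bound by splitting the tangent space into a small-norm regime, where a uniform first-order Lipschitz estimate follows from smoothness of the retraction together with compactness of $\St(n,p)$, and a large-norm regime, where the bound is essentially trivial because both endpoints lie on the compact manifold. The only structural facts I would use are the defining properties $\Retr_X(0) = X$ and $\D\Retr_X(0)[V] = V$, so the argument is retraction-agnostic; I assume, as the statement implicitly does, that the retraction is defined on all of $T_X\St(n,p)$ (which holds for the QR, polar, Cayley and exponential retractions listed in Section~\ref{sect:prelim}).

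First I would treat the regime $\|U\| \leq 1$. Consider
\[
    K = \big\{ (X, U) : X \in \St(n,p),\ U \in T_X\St(n,p),\ \|U\| \leq 1 \big\},
\]
which is compact since the tangency condition $X^\top U + U^\top X = 0$ and the norm constraint are closed and $\St(n,p)$ is bounded. Writing $\Psi(X, U) := \Retr_X(U) - X$, we have $\Psi(X, 0) = 0$, and integrating the derivative of $t \mapsto \Psi(X, tU)$ over $[0,1]$ gives
\[
    \Psi(X, U) = \int_0^1 \D\Retr_X(tU)[U]\, dt,
\]
where $\D$ denotes the differential of $\Retr_X$ in its tangent argument. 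Because $(X, tU) \in K$ for every $t \in [0,1]$ and $\Retr$ is smooth on the tangent bundle, the operator norm of $\D\Retr_X(tU)$ is bounded by a single constant $M_1 < \infty$ over $K$; hence $\|\Psi(X, U)\| \leq M_1\|U\|$.

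Second, for $\|U\| > 1$ I would invoke compactness directly: since $\Retr_X(U), X \in \St(n,p)$, their Euclidean distance is at most the diameter $D := \sup_{Y, Z \in \St(n,p)}\|Y - Z\| \leq 2\sqrt{p}$, so $\|\Retr_X(U) - X\| \leq D < D\|U\|$. Setting $M := \max\{M_1, D\}$ then yields $\|\Retr_X(U) - X\| \leq M\|U\|$ in both regimes.

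The main obstacle — and the only place requiring genuine care — is ensuring that the first-order constant $M_1$ is uniform in the base point $X$ rather than depending on it; this is precisely what compactness of $K$ (inherited from compactness of $\St(n,p)$) delivers, since the jointly continuous map $(X, W) \mapsto \|\D\Retr_X(W)\|_{\mathrm{op}}$ attains a finite maximum on $K$. The remaining ingredients, namely the integral identity and the diameter bound, are routine.
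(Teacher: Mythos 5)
Your proof is correct. One thing to be aware of: the paper does not prove this lemma at all---it is imported by citation from \citet{chen2020proximal} and used as a black box (via the constant $M$) in the proof of Theorem~\ref{them:exact_rsdm}---so there is no in-paper argument to compare against. Your argument is the standard one by which such retraction bounds are established for compact manifolds: joint smoothness of $(X,U)\mapsto\Retr_X(U)$ on the tangent bundle plus compactness of $K=\{(X,U): X\in\St(n,p),\ U\in T_X\St(n,p),\ \|U\|\le 1\}$ gives a uniform first-order constant near the zero section, and the diameter bound $\|Y-Z\|\le 2\sqrt{p}$ for $Y,Z\in\St(n,p)$ disposes of large $U$; this matches how the fact is proved in the cited literature. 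You also prove the statement in its intended form---a single constant $M$ uniform over all $(X,U)$, which is what the proof of Theorem~\ref{them:exact_rsdm} actually requires---rather than the weaker literal pointwise reading of the quantifiers. The two points that genuinely need care are both handled: global definedness of the retraction on all of $T_X\St(n,p)$ (true for the four retractions of Section~\ref{sect:prelim}; e.g.\ for QR, tangency gives $(X+U)^\top(X+U)=I_p+U^\top U\succ 0$, so the Q-factor exists), and uniformity in the base point of the operator-norm bound on $\D\Retr_X(W)$, which follows from continuity of this quantity on the compact set $K$ because a retraction is by definition smooth on the tangent bundle.
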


\begin{proof}[Proof of Theorem \ref{them:exact_rsdm}]
Following the analysis of Theorem \ref{thm:main_convergence}, we have for any $k$,
\begin{align}
    F(X_{k}^{s+1}) 
    &\leq  F(X_k^s) - \frac{1}{2L} \| \grad \widetilde F_k^s(I_r) \|^2. \label{ioeirjteet}
\end{align}
Next, recall that $\grad F_k^s (I_n) = (\nabla F(X_k^s) - \nabla F(X_k^s)^\top)/2$. Then we show for any $s = 0, ..., S-1$ and any $k$,
\begin{align}
    \| \grad F_k^s(I_n) - \grad F_k(I_n)   \| \leq \| \nabla F(X_k^s) - \nabla F(X_k) \| &\leq C_1 \| X_k^s - X_k \| \nonumber\\
    &\leq C_1 \sum_{i=0}^{s-1} \| X_k^{i+1} - X_k^i \| \nonumber \\
    &= C_1\sum_{i=0}^{s-1} \| P_k^i(r) (Y_k^i - I_r) P_k^i(r) X_k^i \| \nonumber\\
    &\leq C_1 \sum_{i=0}^{s-1} \| Y_k^i - I_r \| \nonumber \\
    &\leq C_1 \eta M \sum_{i=0}^{s-1} \| \grad \widetilde F_k^i (I_r) \| \label{oertojt}
\end{align}
where the first and third inequalities are by triangle inequality and the second inequality is by Assumption \ref{assump:bound_grad_hess} that (Euclidean) Hessian is upper bounded. The fourth inequality is by the orthogonality of $P_k^i(r)$ and $X_k^i$. The last inequality is by Lemma \ref{lemma:bound_retr_euclid}. 

Then we can bound for any $k, s$
\begin{align*}
    &\| P_k^s(r) \grad F_k(I_n) P_k^s (r)^\top \|^2 \\
    &\leq 2 \| P_k^s(r) \grad F_k^s (I_n) P_k^s (r)^\top  \|^2 + 2 \| P_k^s(r) (\grad F_k^s (I_n)  - \grad F_k(I_n) )P_k^s (r)^\top  \|^2 \\
    &\leq 2 \| P_k^s(r) \grad F_k^s (I_n) P_k^s (r)^\top  \|^2 + 2 C_1^2 \eta^2 M^2 S \sum_{i=0}^{s-1} \| \grad \widetilde F_k^i (I_r)  \|^2.
\end{align*}
where the second inequality is by \eqref{oertojt}. Summing over the above inequality yields
\begin{align}
    \sum_{s=0}^{S-1} \| P_k^s(r) \grad F_k(I_n) P_k^s (r)^\top \|^2  &\leq (2 + 2 C_1^2 \eta^2 M^2 S^2) \sum_{s=0}^{S-1}\| P_k^s(r) \grad F_k^s (I_n) P_k^s (r)^\top  \|^2 \nonumber\\
    &= (2 + 2 C_1^2 \eta^2 M^2 S^2) \sum_{s=0}^{S-1}\| \grad \widetilde F_k^s (I_r)  \|^2. \label{rttorkng}
\end{align}
Finally, we sum over the inequality \eqref{ioeirjteet} for $s = 0, ..., S-1$, which obtains 
\begin{align*}
    F(X_k^{S}) &=  F(X_k^0) - \frac{1}{2L } \sum_{s=0}^{S-1} \| \grad \widetilde F_k^s (I_r)\|^2 \\
    &\leq F(X_k^0) - (L^{-1} + C_1^2 L^{-3} M^2 S^2)  \sum_{s=0}^{S-1} \| P_k^s(r) \grad F_k(I_n) P_k^s (r)^\top \|^2  \\
    &\leq F(X_k^0) - C_p (L^{-1} + C_1^2 L^{-3} M^2 S^2) \| \grad F_k(I_n)\|^2 \\
    &\leq F(X_k^0) - C_p (L^{-1} + C_1^2 L^{-3} M^2 S^2)/2 \| \grad F(X_k) \|^2
\end{align*}
where the second inequality is by \eqref{rttorkng} and $\eta = 1/L$. The second inequality is by \eqref{eq:restart_condition}. 
The last inequality is by Lemma \ref{lemma:grad_fk_grad_f}. 

Noticing tht $X_k^S = X_{k+1}$ and $X_k^0 = X_k$ and telescoping the result, we have 
\begin{align*}
    \frac{1}{k}\sum_{i=0}^{k-1} \| \grad F(X_i) \|^2 \leq \frac{1}{k} \frac{2L}{C_p(1 + C_1^2 L^{-2} M^2 S^2)} (F(X_0) - F^*),
\end{align*}
which shows the desired result.
\end{proof}

{

\section{Additional numerical experiments on different $p$}

In this section, we investigate the performance of proposed RSDM across various settings of $p$ on the PCA problem. Following Section \ref{sect:pca_problem}, we employ the same procedures in generating the data and fix dimension $n = 2000$. We sweep across a range of values of $p$, i.e., $p = 100, 300, 500, 800, 1000, 1200, 1500, 1800$. For each problem instance, we also run RSDM with different submanifold size $r$. 

The convergence of RSDM in comparison to RGD is given in Figure \ref{figure_5}. We observe that indeed when $p$ becomes small, the performance gap between RSDM and RGD is decreasing, which is in accordance with derived total complexities. Nevertheless, we still observe that RSDM is able to outperform RGD across all the settings, except for the case when $p = 100$. 
This validates the benefits of RSDM even when $p$ becomes smaller. Nonetheless, we admit that when $p$ becomes significantly small relative to $n$ (as in the case when $p = 100$), RGD may perform better because the cost of retraction is much less pronounced. However, this motivates a hybrid design of RSDM such that  when $p$ is relatively small, it effectively behaves similarly to RGD. This is left for future exploration.  

An interesting observation is that when $p = 300$ and $p=500$, selecting $r = p$ can still yield significantly improved convergence especially near optimality. We conjecture this is due to the randomized submanifold descent leads to better-conditioned loss landscape around optimality, and thus performs well particularly for ill-conditioned problems. The theoretical analysis of such claim is left for future works.

\begin{figure}[!t]
    \centering
    \subfloat[PCA ($2000, 100$)]{\includegraphics[width=0.248\textwidth]{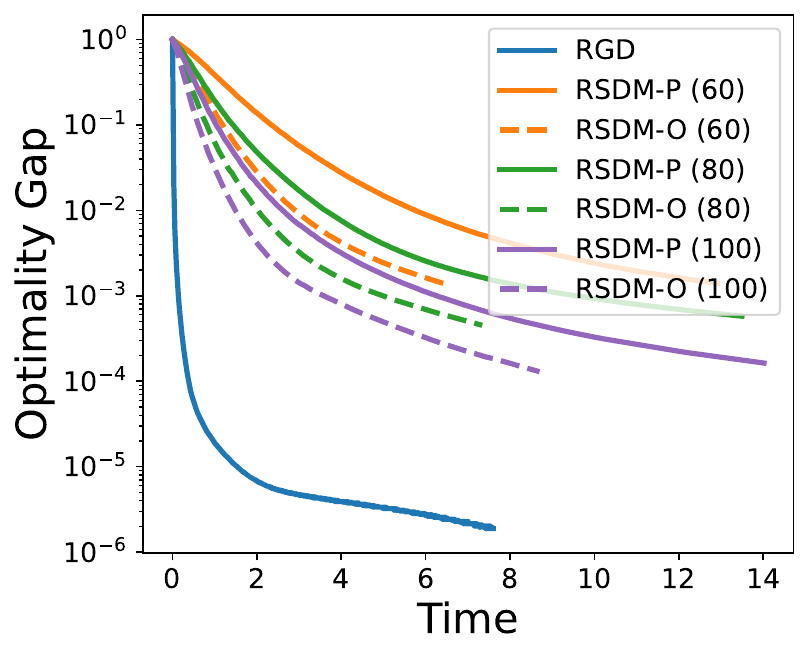}  }
    \subfloat[PCA ($2000, 300$)]{\includegraphics[width=0.248\textwidth]{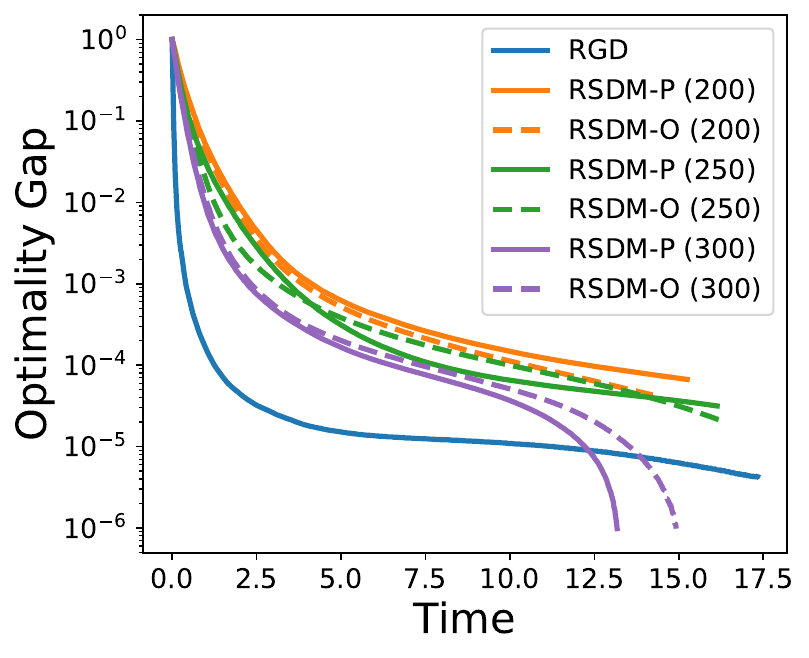}  }
    \subfloat[PCA ($2000, 500$)]{\includegraphics[width=0.248\textwidth]{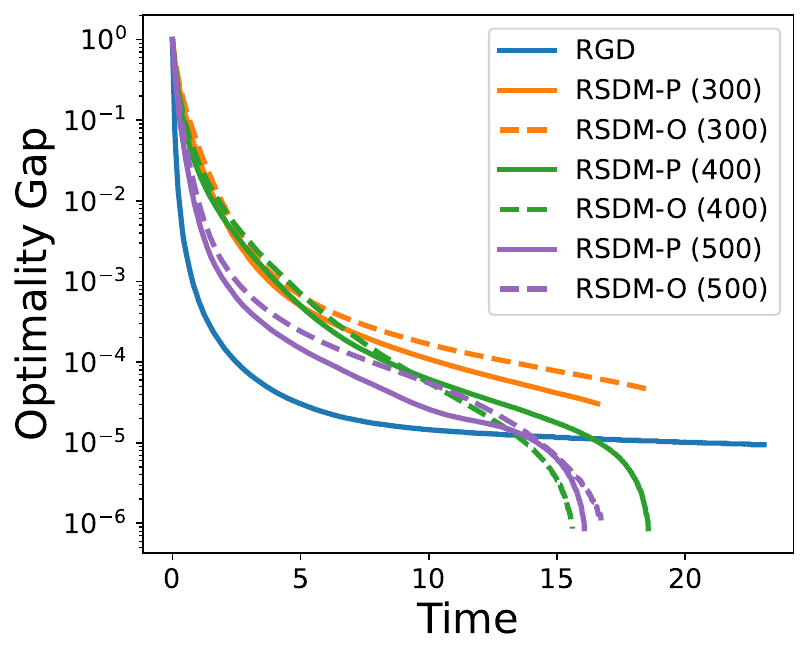}  }
    \subfloat[PCA ($2000,800$)]{\includegraphics[width=0.248\textwidth]{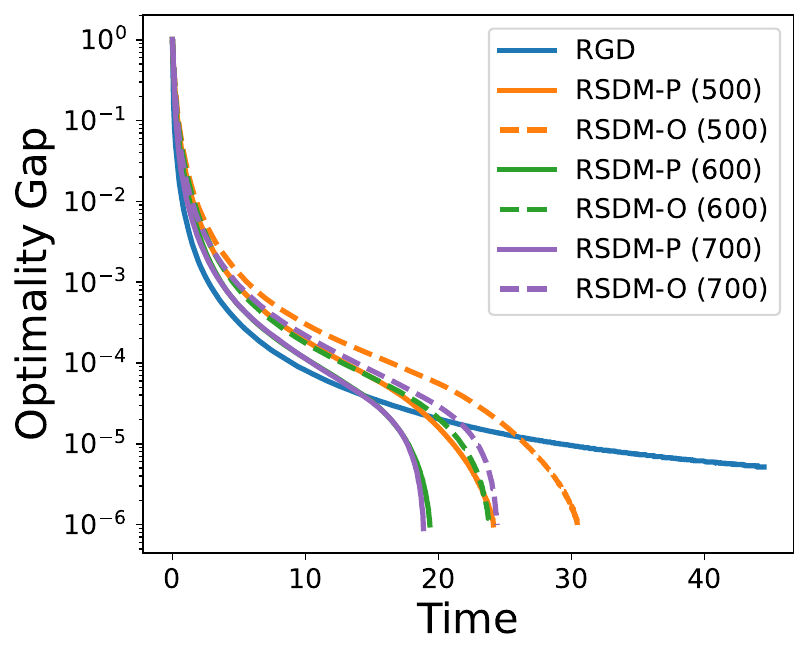}} \\
    \subfloat[PCA ($2000, 1000$)]{\includegraphics[width=0.248\textwidth]{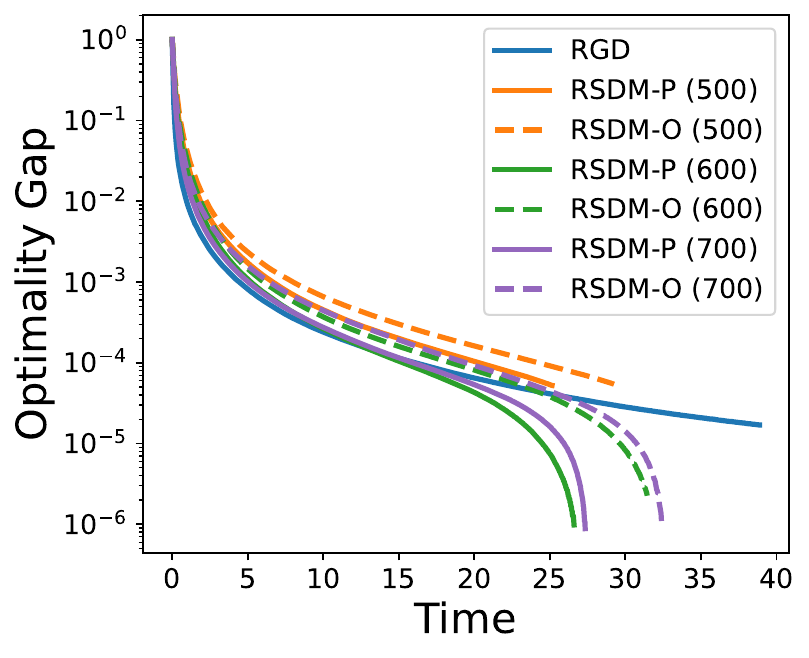}} 
    \subfloat[PCA ($2000, 1200$)]{\includegraphics[width=0.248\textwidth]{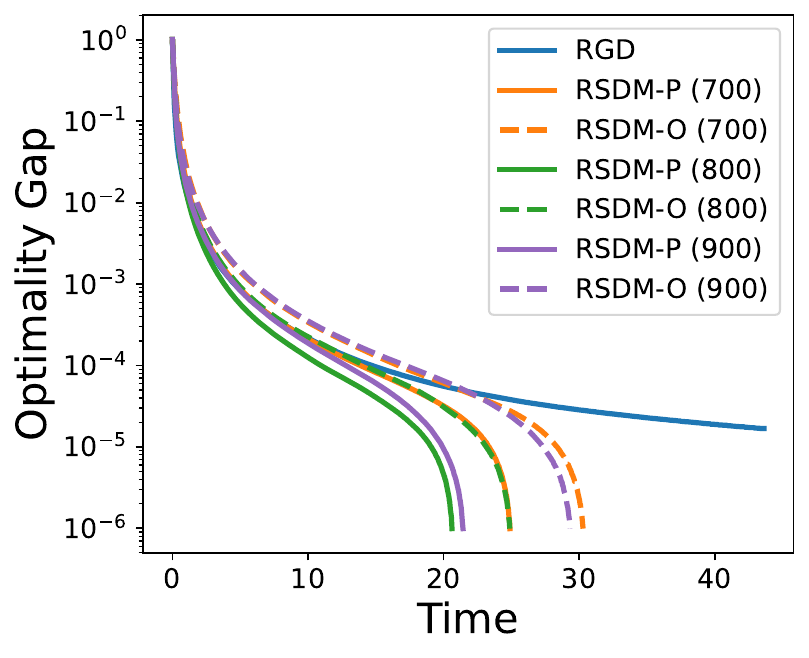}} 
    \subfloat[PCA ($2000, 1500$)]{\includegraphics[width=0.248\textwidth]{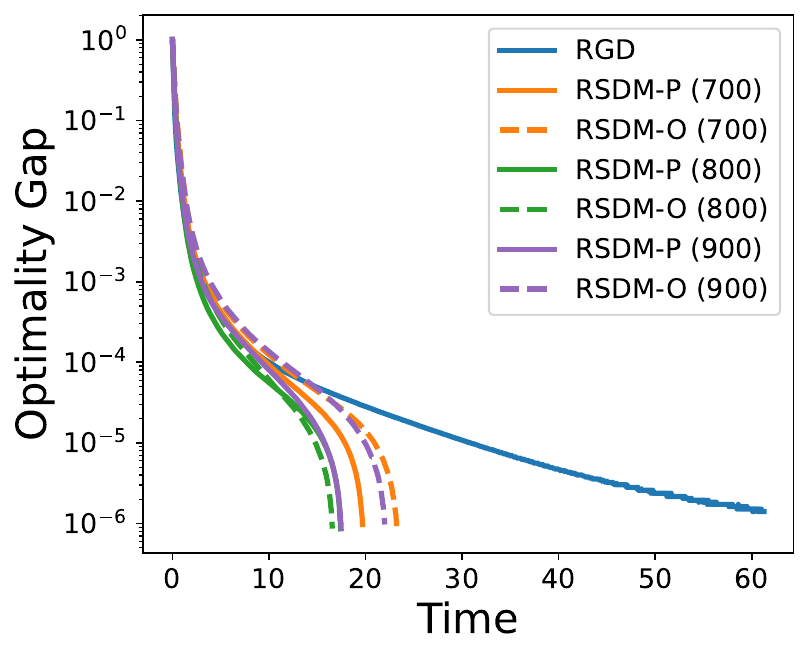}  }
    \subfloat[PCA ($2000,1800$)]{\includegraphics[width=0.248\textwidth]{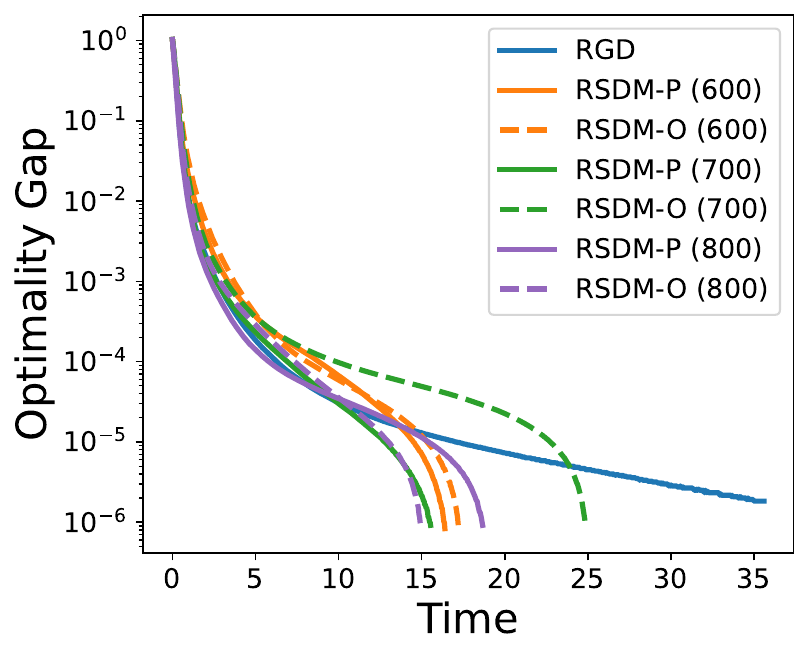}} 
    \caption{Experiments on PCA problem with various settings of $p$. The numbers in brackets correspond to $n, p$ respectively. }
    \label{figure_5}
\end{figure}

\section{Comparison to RSSM}

In this section, we compare our method with RSSM \citep{cheung2024randomized}, which can be viewed as projected Euclidean coordinate descent on the Stiefel manifold. In particular, Algorithm 1 of \citep{cheung2024randomized} translates into the following update steps for smooth optimization. For each iteration $k$, 
\begin{enumerate}
    \item Pick index set $C \subset [p]$ with no repetition in $C$. 

    \item Compute partial Euclidean gradient and project to tangent space:
    \begin{equation*}
        \grad_C F(X^k) = X_C \skew(X_C^\top \nabla_C F(X^k)) + (I - XX^\top) \nabla_C F(X^k),
    \end{equation*}
    where $X_C \in \sR^{n \times |C|}$ is the columns of $X$ corresponding to the index in $C$ and  $\skew(A) = (A - A^\top)/2$ denotes the skew-symmetric operation. $\nabla_C F(X)$ is the partial Euclidean gradient with respect to the columns of $X$ in $C$.

    \item Update the columns of $X_k$ in $C$ by projected gradient descent while keeping other columns the same:
    \begin{equation*}
        X^{k+1}_C = {\rm Proj}_{\St(n, |C|)}( X_C^k - \eta \grad_C F(X^k)),
    \end{equation*}
    where ${\rm Proj}_{\St(n, p)}(\cdot)$ denotes the projection from Euclidean space to Stiefel manifold via SVD.
\end{enumerate}

\paragraph{Comparison to proposed RSDM.}
RSSM can be viewed as Euclidean coordinate descent projected to Stiefel manifold. Let $r = |C| < p$ be the number of columns sampled. The gradient computation requires $O(npr)$ complexity and iterate update requires $O(nr^2)$ for non-standard linear algebra operation. Thus the per-iteration complexity is costly than our proposed RSDM (with permutation sampling), which requires $O(nr^2)$ for gradient complexity and $O(r^3)$ for non-standard linear algebra operation. Thus, we see RSDM-P requires much per-iteration complexity compared to RSSM. In addition, apart from the advantages in per-iteration complexity, RSDM also allows easy generalization to quotient manifolds, such as Grassmann manifold, while this appears challenging for RSSM due to its operations along the columns. 

\paragraph{Numerical comparisons.}
To further validate the benefits of RSDM to RSSM \citep{cheung2024randomized}, we have implemented RSSM with a fixed stepsize and choose $C$ from $[p]$ uniformly without repetition.\footnote{It is worth mentioning that \cite{cheung2024randomized} did not include any numerical experiments nor provide the code.}  We have tuned both $r = |C|$ and stepsize $\eta$ for RSSM to the best performance. We compare the performance on the PCA problem where we tune $r = 700$ and $\eta = 0.1$ for RSSM.

The results are included in Figure \ref{figure_6}. We notice that RSDM (either with orthogonal or permutation sampling) achieves significantly faster convergence compared to RSSM. This verifies the numerical benefits of RSDM over RSSM.

\begin{figure}[!t]
    \centering
    \subfloat[PCA ($2000, 1500$)]{\includegraphics[width=0.248\textwidth]{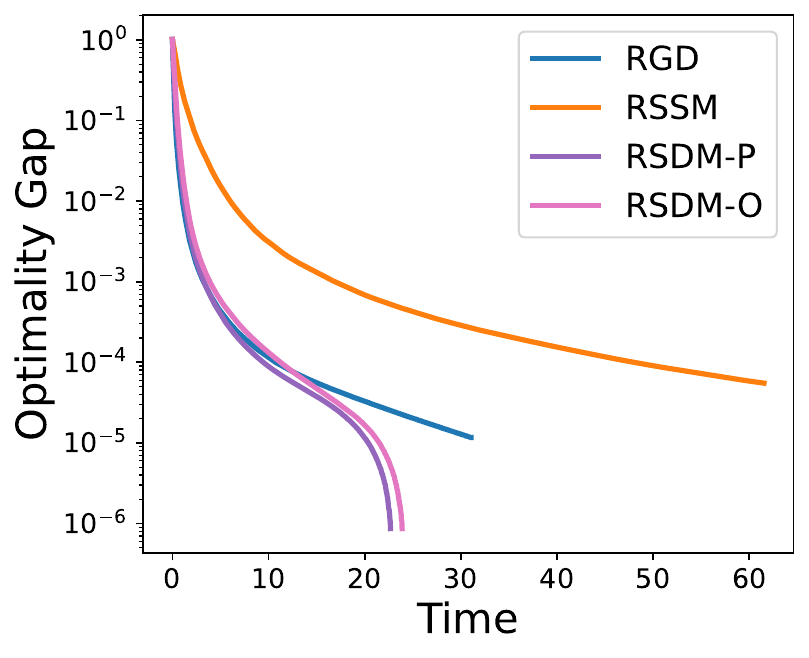}  }
    \subfloat[PCA ($2000, 1500$)]{\includegraphics[width=0.248\textwidth]{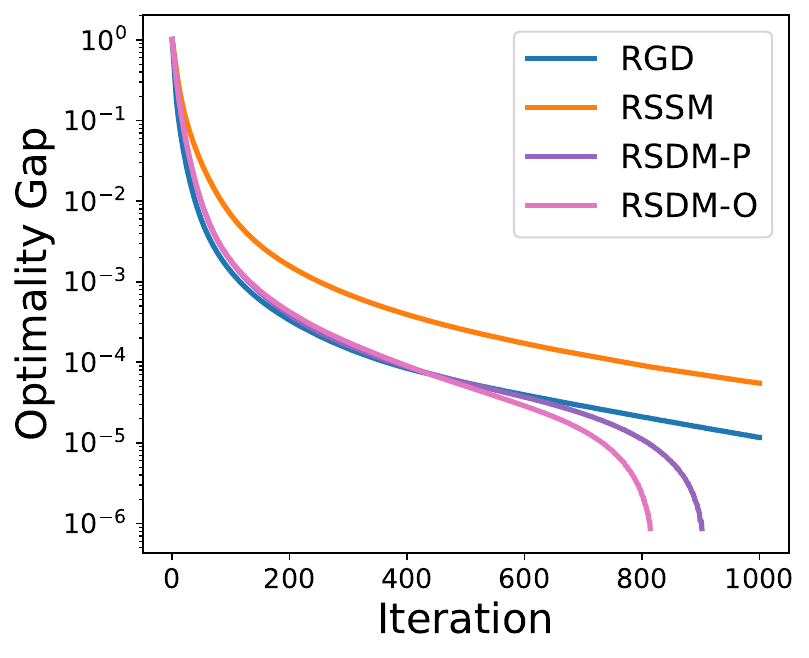}  }
    \caption{Comparison of proposed RSDM with RSSM \citep{cheung2024randomized} on the PCA problem. We observe RSDM converges significantly faster than RSSM.}
    \label{figure_6}
\end{figure}

\section{Comparison to OBCD}

This section compares proposed RSDM to OBCD \citep{yuan2023block}. We first remark that \cite{yuan2023block} is primarily designed for nonsmooth optimization and thus optimality conditions and convergence analysis are largely different. Here we adapt the algorithm of OBCD to the smooth case. 

In this case, because they parameterize $X_{k+1} = X_k + U_B (V - I_r) U_B^\top X_k$ where $U_B \in \sR^{n \times r}$ is a random truncated permutation matrix, and $V \in \sR^{r \times r}$, they minimize a quadratic upper bound for $F(X_k + U_B (V - I_r) U_B^\top X_k)$. Suppose $F$ is $L_F$ smooth, then the subproblem translates into 
\begin{align}
    \min_{V \in \St(r,r)} \langle V - I_r, U_B^\top \nabla F(X_k) X_k^\top  U_B \rangle + \frac{L_F}{2} \| V - I_r \|^2 \label{eq:subproblem_obcd}
\end{align}
for some constant $L_F$ that depends on the smoothness of $F$. And thus there exists a global solution to \eqref{eq:subproblem_obcd}, i.e., $V^*$ is the top $r$ eigenvectors of $ I_r - \frac{1}{L_F} U_B^\top \nabla F(X_k) X_k^\top U_B$.

This is related but different to our update of $Y$ (according to our notation) when we use permutation sampling strategy.  In particular, we update $Y$ by 
\begin{align*}
    Y = \Retr_{I_r} (- \eta \grad \widetilde F_k(I_r)) = \Retr_{I_r} \big( - \frac{\eta}{2} P_k(r) ( \nabla F(X_k) X_k^\top - X_k \nabla F(X_k)^\top) P_k(r)^\top \big).
\end{align*}
The key difference is that we have a skew-symmetric operation for $\nabla F(X_k) X_k^\top$, which renders the update direction properly defined as Riemannian gradient on $\gO(r)$. In contrast, OBCD leverages the Euclidean gradient for the update. 

This difference leads to a large deviation in the proof strategy, making the analysis of \citep{yuan2023block} less aligned with common analysis on Riemannian manifolds. This makes their developments more difficult to generalize to other manifolds of interest and incorporate additional optimization techniques on manifolds, such as adaptive gradients, acceleration, Newton based methods, etc.

Apart from this main difference, we also summarize other differences of their developments compared to this work:
\begin{itemize}
    \item  They show convergence to block-$k$ stationary points, which seems to be weaker than our established convergence to stationary points (as shown in Theorem 5.5 of \citep{yuan2023block}.

    \item Their convergence rate in Theorem 6.3 depends on a large binomial coefficient $C_n^r$ while our convergence has a coefficient $n^2 r^{-2}$.

    \item They only consider $U_B$ to be (truncated) permutation while we consider both permutation and general orthogonal matrix. 

    \item We have shown convergence in stochastic settings and shown extension to other quotient manifolds,  which is not the case for \citep{yuan2023block}.

    \item They only show convergence in expectation while we show convergence both in expectation and with high probability and almost surely. 

\end{itemize}

Finally, we compare the proposed RSDM to OBCD numerically on the PCA problem. We have solved $V$ from \eqref{eq:subproblem_obcd} analytically with SVD. We choose $r = 700$, which is the same as RSDM for comparability and tune stepsize accordingly. The convergence plots are given in Figure \ref{figure_7} where we observe that OBCD \citep{yuan2023block} converges significantly slower compared to RSDM. This suggests the critical difference in the update directions (Riemannian gradient for proposed RSDM and Euclidean gradient for OBCD) has led to significant convergence disparities, thus verifying superiority of the framework of Riemannian optimization employed by RSDM in this paper.

\begin{figure}[!t]
    \centering
    \subfloat[PCA ($2000, 1500$)]{\includegraphics[width=0.248\textwidth]{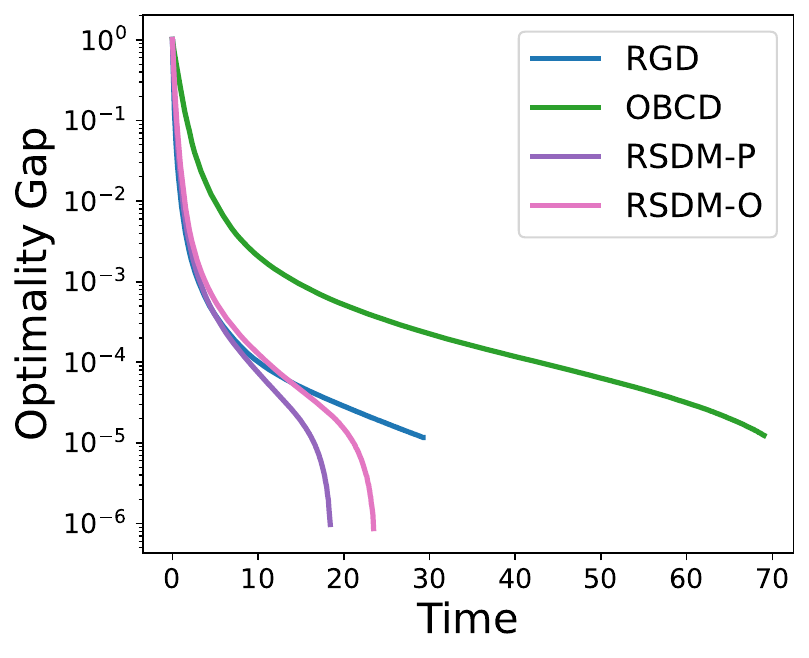}  }
    \subfloat[PCA ($2000, 1500$)]{\includegraphics[width=0.248\textwidth]{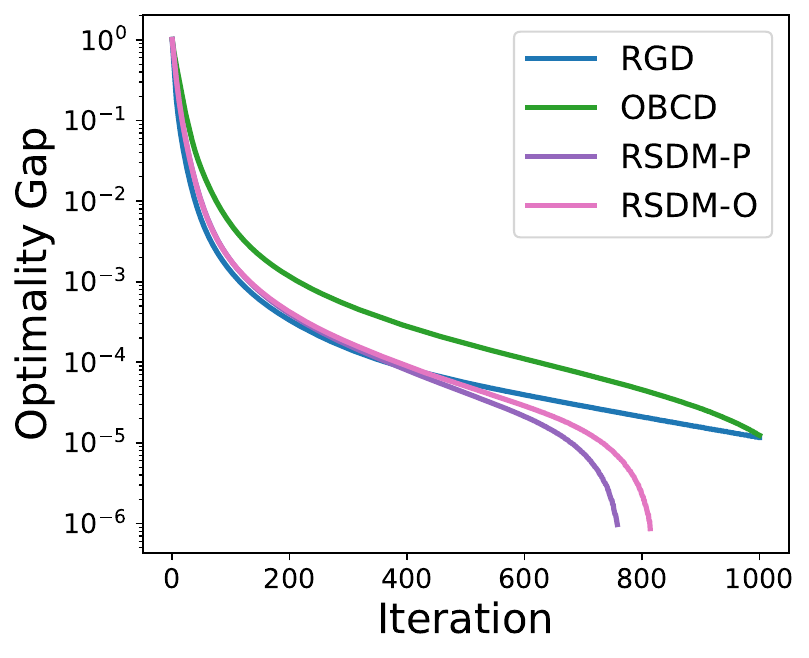}  }
    \caption{{ Comparison of proposed RSDM with OBCD \citep{yuan2023block} on the PCA problem. We observe RSDM converges significantly faster than OBCD.}}
    \label{figure_7}
\end{figure}

}

{
\section{RSDM with momentum}

In this section, we explore the potential of RSDM when coupled with momentum. We adopt the strategy of fixing $P_k$ for several iterations, where we apply momentum. This is equivalent to taking multiple gradient descent (with momentum for minimizing $\widetilde F_k(Y)$ initialized from $I_r$. The procedures are included in Algorithm \ref{rsgm_momentum_algo}. 

\begin{algorithm}[H]
 \caption{RSDM-momentum}
 \label{rsgm_momentum_algo}
 \begin{algorithmic}[1]
  \STATE Initialize $X_0 \in \St(n,p)$. 
  \FOR{$k = 0,...,K-1$}
    \STATE Sample $P_k \in \gO(n)$ and let $\widetilde F_k(Y) = F(U_k(Y) X_k)$ where $U_k(Y)$ is defined in \eqref{u_k_y_def}.
    \STATE Set $Y_k^0 = I_r$.
    \FOR{$s = 0,....,S-1$}
    \STATE Compute Riemannian gradient $\grad \widetilde F_k(Y_k^s)$. 
    \STATE Update $Y_k^{s+1} = \Retr_{Y_k^s}(- \eta \, \grad \widetilde F_k(Y_k^s) + \beta {\rm P}_{Y_k^s}(Y_k^s - Y_k^{s-1}))$. 
    \ENDFOR
    \STATE Set $X_{k+1} = U_k(Y_k^{S}) X_k$. 
  \ENDFOR
 \end{algorithmic} 
\end{algorithm}

It is worth mentioning that we now require to compute the gradient $\grad \widetilde F_k (Y)$ for any $Y \in \gO(r)$, while previously we only need to compute $\grad \widetilde F_k (I_r)$. Specifically, we compute 
\begin{align*}
    \grad \widetilde F_k(Y) &= \frac{1}{2} \big( \nabla \widetilde F_k (Y) - Y \nabla \widetilde F_k (Y)^\top Y \big) \\
    &=\frac{1}{2} \big( P_k(r) \nabla F( Z_k) X_k^\top P_k(r)^\top - Y P_k(r) X_k \nabla F(Z_k)^\top P_k(r)^\top  Y \big)
\end{align*}
\begin{wrapfigure}{r}{0.3\textwidth}  
  \centering
  \includegraphics[width=0.25\textwidth]{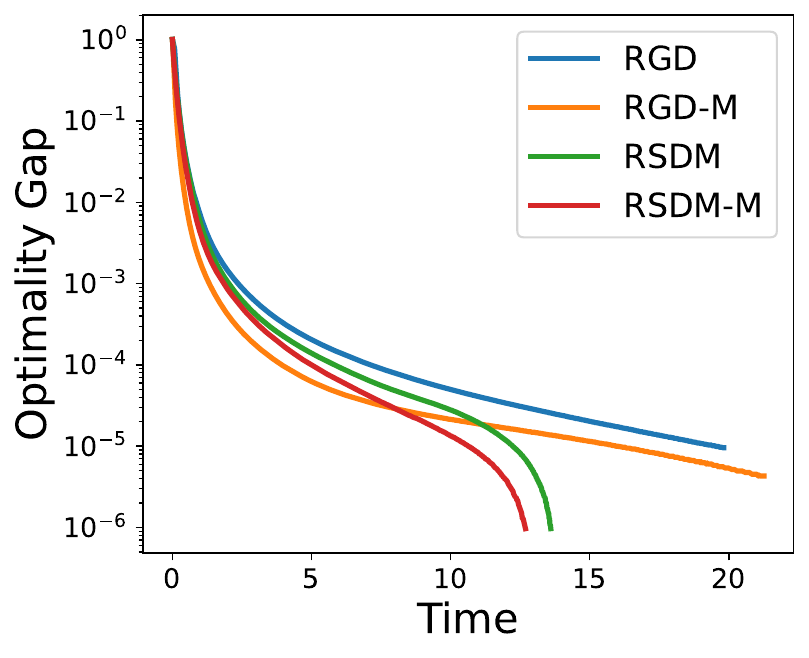}
  \vspace{-5pt}
  \caption{Comparison of RSDM with momentum with RGD with momentum on the PCA problem.}
  \label{figure_8}
\end{wrapfigure}
where $Z_k = U_k(Y) X_k = X_k + P_k(r)^\top (Y - I_r) P_k(r) X_k$.


To test the feasibility of the proposed algorithm, we evaluate RSDM with momentum (RSDM-M) with permutation sampling and compare against RGD (RGD-M) with momentum on the PCA problem. We consider the setting of $n=2000, p = 1500$ and $r = 700$. We set the momentum parameter to be $0.5$ for both RSDM and RGD. We tune and set the learning rate of $0.1$ for RGD-M and $1.0$ for RSDM-M. 

From Figure \ref{figure_8}, we see that the RSDM with momentum improves the convergence of RSDM, which demonstrate the potential of incorporating momentum into our framework.





}

{
}

\end{document}